\newtheorem{thm}{Theorem}
\newtheorem{cor}[thm]{Corollary}
\newtheorem{defi}[thm]{Definition}
\newtheorem{rem}[thm]{Remark}
\newtheorem{nota}[thm]{Notation}
\newtheorem{exa}[thm]{Example}
\newtheorem{ack}[thm]{Acknowledgement}
\newtheorem*{tempo*}{Template}
\newcommand\be{\begin{equation}}
\newcommand\ee{\end{equation}}
\newbox\gnBoxA
\newdimen\gnCornerHgt
\newdimen\gnArgHgt
\def\Godelnum #1{%
	\setbox\gnBoxA=\hbox{$#1$}%
	\gnArgHgt=\ht\gnBoxA%
	\ifnum \gnArgHgt<\gnCornerHgt
		\gnArgHgt=0pt%
	\else
		\advance \gnArgHgt by -\gnCornerHgt%
	\fi
	\raise\gnArgHgt\hbox{$\ulcorner$} \box\gnBoxA %
		\raise\gnArgHgt\hbox{$\urcorner$}}
\def\bdefi{\begin{defi}\rm}
\def\edefi{\end{defi}}
\def\bnota{\begin{nota}\rm}
\def\enota{\end{nota}}
\def\brem{\begin{rem}\rm}
\def\erem{\end{rem}}
\def\RCA{\textup{\textsf{RCA}}}
\def\ef{\textup{\textsf{ef}}}
\def\ns{\textup{\textsf{ns}}}
\def\WKL{\textup{\textsf{WKL}}}
\def\WWKL{\textup{\textsf{WWKL}}}
\def\T{\mathcal{T}}
\def\bye{\end{document}}
\def\N{{\mathbb  N}}
\def\R{{\mathbb  R}}
\def\WEI{\textup{\textsf{WEI}}}
\def\MUC{\textup{\textsf{MUC}}}
\def\ULC{\textup{\textsf{ULC}}}
\def\ULD{\textup{\textsf{ULD}}}
\def\R{{\mathbb{R}}}
\def\b{{\mathfrak{b}}}
\def\({\textup{(}}
\def\){\textup{)}}
\def\st{\textup{st}}
\def\asa{\leftrightarrow}
\def\di{\rightarrow}
\def\eps{\varepsilon}
\def\M{\textsf{\textup{M}}}
\def\paai{\Pi_{1}^{0}\textup{-\textsf{TRANS}}}
\def\QFAC{\textup{\textsf{QF-AC}}}
\def\r{{\mathfrak{r}}}
\def\PRA{\textup{\textsf{PRA}}}
\def\app{\textup{\textsf{app}}}
\def\EPA{\textup{\textsf{E-PA}}}
\def\MU{\textup{\textsf{MU}}}
\def\MAC{\textup{\textsf{mAC}}_{\st}^{\omega}}
\def\FTC{\textup{\textsf{FTC}}}
\def\CRI{\textup{\textsf{CRI}}}
\numberwithin{equation}{section}
\numberwithin{thm}{section}
\begin{document}
\title[The computational content of standard mathematics]{Non-standard Nonstandard Analysis and the computational content of standard mathematics}
\author{Sam Sanders}
\address{Munich Center for Mathematical Philosophy, LMU Munich, Germany \& Department of Mathematics (S22), Krijgslaan 281, Ghent University, 9000 Ghent, Belgium}
\email{sasander@me.com}
\keywords{Nonstandard Analysis, majorizability, higher-order arithmetic}
\subjclass[2010]{03F35 and 26E35}
\begin{abstract}
The aim of this paper is to highlight a hitherto unknown \emph{computational} aspect of Nonstandard Analysis.  
Recently, a number of nonstandard versions of G\"odel's system $\textsf{T}$ have been introduced (\cites{brie, fega,benno2 }), and it was 
shown in \cite{sambon} that the systems from \cite{brie} play a pivotal role in extracting computational information from \emph{proofs in Nonstandard Analysis}.  
It is a natural question if similar techniques may be used to extract computational information from proofs \emph{not involving Nonstandard Analysis}.  
In this paper, we provide a positive answer to this question using the nonstandard system from \cite{fega}.  
This system validates so-called non-standard uniform boundedness principles which are central to Kohlenbach's approach to proof mining (\cite{kohlenbach3}).    
In particular, we show that from \emph{classical and ineffective} existence proofs (not involving Nonstandard Analysis but using weak K\"onig's lemma), one can `automatically' extract approximations to the objects claimed to exist. 
\end{abstract}

\maketitle
\thispagestyle{empty}

%

\section{Introduction}\label{intro}
The aim of this paper is to highlight a hitherto unknown \emph{computational} aspect of Nonstandard Analysis.  
In two words, we show how the nonstandard system introduced in \cite{fega} can be used to extract computational information from proofs \emph{not involving Nonstandard Analysis}.  
We provide more details in the next section.  
\subsection{A new computational aspect of Nonstandard Analysis}\label{wintro}
Recently, a number of nonstandard versions of G\"odel's system $\textsf{T}$ have been introduced (\cites{brie, fega,benno2}), based on Nelson's \emph{internal set theory} (\cite{wownelly}).   
Using the systems from \cite{brie}, an algorithm was formulated in \cite{sambon} which allows for the extraction of computational information from proofs of theorems from `pure' Nonstandard Analysis, i.e.\ formulated solely with \emph{nonstandard} definitions (of continuity, convergence, differentiability, compactness, et cetera).  Note that the results from \cite{sambon} are not needed for this paper, but merely serve as a motivation or starting point.

\medskip

Intuitively speaking, the algorithm from \cite{sambon}*{\S3.5} produces a proof of the `constructive' version of the theorem at hand (if possible), or the associated equivalence from \emph{Reverse Mathematics} (See \cite{simpson2, simpson1} for the latter) otherwise.      
Here, the word `constructive' may be interpreted either in the mainstream sense `effective', or the foundational sense of Bishop's \emph{Constructive Analysis};  Both cases are treated in \cite{sambon}.  
We believe our aims to be best further illustrated by way of an example, as follows.  
\begin{exa}\rm
Assuming basic familiarity with Nelson's internal set theory, we consider the following theorem:  \emph{A uniformly continuous function on the unit interval is Riemann integrable there}.  The previous theorem formulated with the nonstandard 
definitions of continuity and integration is:  
\begin{align}
(\forall f:\R\di \R)\big[(\forall x, y\in [0,1])&[x\approx y \di f(x)\approx f(y)] \label{NST}\\
&\di  (\forall \pi, \pi' \in P([0,1]))(\|\pi\|,\| \pi'\|\approx 0  \di S_{\pi}(f)\approx S_{\pi}(f) )  \big],\notag
\end{align}
where, as suggested by the notation, $\pi$ and $\pi'$ are discrete partitions of the unit interval, and $\|\pi\|$ is the mesh of a partition, i.e.\ the largest distance between two adjacent partition points;  
The number $S_{\pi}(f)$ is the \emph{Riemann sum} associated with the partition $\pi$, i.e.\ $\sum_{i=0}^{M-1}f(t_{i}) (x_{i}-x_{i+1}) $ for $\pi=(0, t_{0}, x_{1},t_{1},  \dots,x_{M-1}, t_{M-1}, 1)$.

\medskip
\noindent
By contrast, the \emph{effective version} of the theorem is as follows:
\begin{align}\textstyle
\textstyle~(\forall f: \R\di \R, g ,n)\Big[(\forall &\textstyle x, y \in [0,1],k)(|x-y|<\frac{1}{g(k)} \di |f(x)-f(y)|\leq\frac{1}{k})\label{EST}\\
&\textstyle\di  (\forall \pi, \pi' \in P([0,1]))\big(\|\pi\|,\| \pi'\|< \frac{1}{t(g,n)}  \di |S_{\pi}(f)- S_{\pi}(f)|\leq \frac{1}{n} \big)  \Big],\notag
\end{align}
where $t$ is a term from the original language, i.e.\ \emph{not involving} Nonstandard Analysis.
In \cite{sambon}*{\S3.1} it is shown how a proof of the nonstandard version of \eqref{NST} can be converted into a proof of the effective version \eqref{EST};  Furthermore, the term $t$ in \eqref{EST} can be `read off' from the proof of \eqref{NST}. 
Finally, an algorithm is formulated in \cite{sambon}*{\S3.5} which converts the proof of a theorem of pure Nonstandard Analysis, like \eqref{NST}, into the proof of associated effective version, like \eqref{EST}, and also extracts the required term $t$.  
\end{exa}
In light of the previous example from \cite{sambon}, we observe it is possible to extract computational information from certain proofs in Nonstandard Analysis.  
It is then a natural question if similar techniques may be used to extracted computational information from proofs \emph{not involving Nonstandard Analysis}.  
In this paper, we provide a positive answer to this question.  The nonstandard system $\M$, introduced by Ferreira-Gaspar in \cite{fega} and discussed in Section~\ref{otherm} below, plays a central role.    
As pointed out in \cite{fega}*{\S1-2}, $\M$ validates so-called non-standard uniform boundedness principles which are central to Kohlenbach's approach to proof mining (\cite{kohlenbach3}).    
Note that Kohlenbach uses the word `non-standard' in \cite{kohlenbach3} to refer to the \emph{non-classical} status of the uniform boundedness principles.

\medskip

As to methodology, we shall establish more `non-standard' features of $\M$ in Section~\ref{NSANSA}, which imply the equivalence between the usual $\eps$-$\delta$-definitions (of continuity, differentiability, convergence, et cetera) and the associated nonstandard definitions.  
Such an equivalence does not follow from the usual base theory or standard principles.  

\medskip

Thanks to the aforementioned equivalences, a proof of a mathematical theorem only involving $\eps$-$\delta$-definitions, gives rise to a proof of the associated \emph{nonstandard} version in $\M$.  
From the latter proof, computational information can be extracted using (substantial modifications of) the techniques from \cite{sambon}.  We treat the fundamental theorem of calculus as an example in Section~\ref{examinisalamini}, 
while further examples are treated in Section~\ref{convieneient} and \ref{papadimi}.    
In Section~\ref{sucktwo}, we provide a general template for extracting computational information using $\M$ from a proof not involving Nonstandard Analysis (but weak K\"onig's lemma is allowed).  
In Section~\ref{LIM}, we establish the limitations and scope of this template.  

\medskip

A particularly nice result is obtained in Section \ref{papprox}, namely that from certain classical existence proofs (not involving Nonstandard Analysis), one can `automatically' extract approximations to the objects claimed to exist. 
If the object claimed to exist is unique, the approximations converge to the former.     
Examples which can be treated in this way are:  The intermediate value theorem, the Weierstra\ss~maximum theorem, the Peano existence theorem, and the Brouwer fixed point theorem (See \cite{simpson2}*{II.6 and IV.2})

\medskip

In conclusion, we shall use a system of `non-standard' Nonstandard Analysis to extract computational information from proofs not involving Nonstandard Analysis.  

\subsection{Internal set theory and its fragments} \label{otherm}
In this section, we discuss Nelson's \emph{internal set theory}, first introduced in \cite{wownelly}, and its fragment $\M$ from \cite{fega}.  

\medskip

In Nelson's \emph{syntactic} approach to Nonstandard Analysis (\cite{wownelly}), as opposed to Robinson's semantic one (\cite{robinson1}), a new predicate `st($x$)', read as `$x$ is standard' is added to the language of \textsf{ZFC}, the usual foundation of mathematics.  
The notations $(\forall^{\st}x)$ and $(\exists^{\st}y)$ are short for $(\forall x)(\st(x)\di \dots)$ and $(\exists y)(\st(y)\wedge \dots)$.  A formula is called \emph{internal} if it does not involve `st', and \emph{external} otherwise.   
The three external axioms \emph{Idealisation}, \emph{Standard Part}, and \emph{Transfer} govern the new predicate `st';  They are respectively defined\footnote{The superscript `fin' in \textsf{(I)} means that $x$ is finite, i.e.\ its number of elements are bounded by a natural number.      } as:  
\begin{enumerate}
\item[\textsf{(I)}] $(\forall^{\st~\textup{fin}}x)(\exists y)(\forall z\in x)\varphi(z,y)\di (\exists y)(\forall^{\st}x)\varphi(x,y)$, for internal $\varphi$.  
\item[\textsf{(S)}] $(\forall x)(\exists^{\st}y)(\forall^{\st}z)\big([z\in x\wedge \varphi(z)]\asa z\in y\big)$, for any $\varphi$.
\item[\textsf{(T)}] $(\forall^{\st}x)\varphi(x, t)\di (\forall x)\varphi(x, t)$, where $\varphi$ is internal,  $t$ captures \emph{all} parameters of $\varphi$, and $t$ is standard.  
\end{enumerate}
The system \textsf{IST} is (the internal system) \textsf{ZFC} extended with the aforementioned three external axioms;  
The former is a conservative extension of \textsf{ZFC} for the internal language, as proved in \cite{wownelly}.    

\medskip

In \cite{fega}, the authors study G\"odel's system $\textsf{T}$ extended with special cases of the external axioms of \textsf{IST}.  
In particular, they consider nonstandard extensions of the (internal) systems \textsf{E-HA}$^{\omega}$ and $\textsf{E-PA}^{\omega}$, respectively \emph{Heyting and Peano arithmetic in all finite types and the axiom of extensionality}.       
We refer to \cite{brie}*{\S2.1} for the exact details of these (mainstream in mathematical logic) systems.  

\medskip

The results in \cite{fega} are inspired by those in \cite{brie}.  In particular, the notion of finiteness central to the latter is replaced by the notion of \emph{strong majorizability}.  The latter notion and the associated system $\M$ is introduced in the next paragraph, assuming familiarity with the higher-type framework of G\"odel's $\textsf{T}$ (See e.g.\ \cite{brie}*{\S2.1}).    

\medskip

The system $\M$, a conservative extension of $\textsf{E-PA}^{\omega}$ (See \cite{fega}*{Cor.\ 1}) is based on the Howard-Bezem notion of \emph{strong majorizability}.  We first introduce the latter and related notions.  
For more extensive background on strong majorizability, we refer to \cite{kohlenbach3}*{\S3.5}.    
\bdefi[Majorizability] The strong majorizability predicate `$\leq^{*}$' is inductively defined as follows:
\begin{itemize}
\item $x \leq_{0}^{*} y$ is $x \leq_{0} y$;
\item $x \leq_{\rho\di \sigma}^{*} y$ is   $(\forall u)(\forall v\leq_{\rho}^{*}u)\big(xu\leq_{\sigma}^{*} yv \wedge yu\leq_{\sigma}^{*} yv  \big)  $.   
\end{itemize}
An object $x^{\rho}$ is called \emph{monotone} if $x\leq_{\rho}^{*}x$.  The quantifiers $(\tilde{\forall} x^{\rho})$ and $(\tilde{\exists} y^{\rho} )$ range over the monotone objects of type $\rho$, i.e.\ they are abbreviations for the formulas $(\forall x)(x\leq^{*}x \di \dots)$ and 
$(\exists y)(y\leq^{*}y\wedge\dots)$.
\edefi
As in \cite{fega}*{\S2}, we have the following definition.  The language of $\textsf{E-PA}_{\st}^{\omega}$ is the language of $\textsf{E-PA}^{\omega}$ extended with a new predicate $\st^{\sigma}$ for every finite type $\sigma$.  
The typing of the standardness predicate is usually omitted.    
\bdefi[Basic axioms]\label{debs}
The system $ \textsf{E-PA}^{\omega}_{\st} $ is defined as $ \textsf{E-PA}^{\omega} + \T^{*}_{\st} + \textsf{IA}^{\st}$, where $\T^{*}_{\st}$
consists of the following axiom schemas.
\begin{enumerate}
\item $x =_{\sigma} y \di (\st^{\sigma}(x) \di \st^{\sigma}(y));$
\item $\st^{\sigma}(y) \di (x\leq_{\sigma}^{*} y \di \st^{\sigma}(x))$;
\item $\st^{\sigma}(t)$, for each closed term $t$ of type $\sigma$; 
\item $\st^{\sigma\di \tau} (z) \di (\st^{\sigma} (x) \di \st^{\tau}(zx))$.
\end{enumerate}
The \emph{external induction axiom} \textsf{IA}$^{\st}$ is the following schema for any $\Phi$: 
\be\tag{\textsf{IA}$^{\st}$}
\Phi(0)\wedge(\forall^{\st}n^{0})(\Phi(n) \di\Phi(n+1))\di(\forall^{\st}n^{0})\Phi(n).     
\ee
\edefi
\bdefi[Non-basic axioms]\label{nba`'}~
\begin{enumerate}
\item Monotone Choice $ \textsf{mAC}_{\st}^{\omega}$:     For any internal $\varphi$, we have   
\[
(\tilde{\forall}^{\st} x)(\tilde{\exists}^{\st}y)\varphi(x, y)\di (\tilde{\exists}^{\st}f)(\tilde{\forall}^{\st} x)(\exists y\leq^{*}f(x))\varphi(x,y). 
\]

\item Realization $\textsf{R}^{\omega}$:  For any internal $\varphi$, we have   
\[
(\forall x)(\exists^{\st}y)\varphi(x, y)\di (\tilde{\exists}^{\st}z)(\forall x)(\exists y\leq^{*}z)\varphi(x,y). 
\]
\item Majorizability axiom \textsf{MAJ}$_{\st}^{\omega}$:   $(\forall^{\st}x)(\exists^{\st}y)(x\leq^{*} y)$   
\end{enumerate}
\end{defi}
Ferreira and Gaspar define the functional interpretation $U_{\st}$ in \cite{fega}*{\S2} (We introduce it in the proof of Corollary \ref{consresultcor}) and prove the following theorem for the system $\M\equiv \textsf{\textup{E-PA}}^{\omega}_{\st} + \textsf{\textup{mAC}}^{\omega}_{\st} + \textsf{\textup{R}}^{\omega} + \textsf{\textup{MAJ}}^{\omega}_{\st} $.  
\begin{thm}\label{consresult3}
Let $\Psi(\tup a)$ be a formula in the language of \textup{\textsf{E-PA}}$^{\omega}_{\st}$ and suppose $\Psi(\tup a)^{{U_{\st}}}\equiv(\tilde{\forall}^{\st} \tup b)( \tilde{\exists}^{\st} \tup c) \varphi(\tup b, \tup c, \tup a)$. If 
\be\label{antecedn3}
\M\vdash \Psi(\tup a), 
\ee
then there are closed monotone terms $t$ of appropriate types such that
\be\label{consequalty3}
\textup{\textsf{E-PA}}^{\omega}   \vdash ( \tilde{\forall} \tup b)  \varphi(\tup b,\tup t(b), \tup a).
\ee
\end{thm}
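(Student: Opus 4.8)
The plan is to prove the theorem by induction on the length of a derivation of $\M\vdash\Psi(\tup a)$, following the standard template for soundness of functional interpretations and, more specifically, the bounded-style interpretation of \cite{brie} on which $U_{\st}$ is modelled. As a preliminary step one verifies, by an auxiliary induction on the structure of formulas, that $\Psi^{U_{\st}}$ can always be put into the normal form $(\tilde\forall^{\st}\tup b)(\tilde\exists^{\st}\tup c)\varphi(\tup b,\tup c,\tup a)$ with $\varphi$ internal: the clauses for $\wedge$, $\vee$ and for the quantifiers only ask one to pair tuples and to pull the monotone quantifiers to the front, whereas the clause for implication introduces the familiar Skolem and counterexample functionals. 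Crucially, those functionals are required to be \emph{monotone}, i.e.\ strongly self-majorizing, which is precisely why the interpretation is built on the Howard--Bezem predicate $\leq^{*}$ instead of on plain finiteness. Throughout the induction one carries along, for each $\Psi$ with $\M\vdash\Psi$, a tuple of \emph{closed monotone} terms $\tup t$ realizing the $\tilde\exists^{\st}$-block of $\Psi^{U_{\st}}$.

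For the base case one treats the logical axioms of $\textsf{E-PA}^{\omega}$ together with every non-logical axiom of $\M$. The basic axioms $\T^{*}_{\st}(1)$--$(4)$ interpret to essentially universal internal statements, so their witnessing terms are trivial (projections, or the constant terms provided by clause $(3)$ of $\T^{*}_{\st}$), and the same is true of $\textsf{MAJ}^{\omega}_{\st}$, which is witnessed by the closed monotone majorant construction available in $\textsf{E-PA}^{\omega}$. The two genuinely nonstandard principles $\MAC$ and $\textsf{R}^{\omega}$ are \emph{self-interpreting}: because their conclusions only demand a \emph{bound} $y\leq^{*}f(x)$, respectively $y\leq^{*}z$, rather than an exact witness, their $U_{\st}$-translations become implications realized by the identity together with the given data — this is exactly the design feature of $U_{\st}$ that eliminates these axioms. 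Finally, external induction $\textsf{IA}^{\st}$ is handled via G\"odel's recursor: from witnessing terms for the induction step one builds a term for the conclusion by primitive recursion, and monotonicity is secured by replacing the iterates with their running maxima.

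For the inductive step, each inference rule of $\textsf{E-PA}^{\omega}$ must be shown to transform closed monotone witnesses for the premises into closed monotone witnesses for the conclusion; the generalization rule and the $\forall/\exists/\wedge/\vee$-rules are routine regroupings of tuples. The delicate cases are modus ponens and contraction: interpreting $A\to B$ and feeding it the interpretation of $A$ forces one to merge two independent families of witnesses into one, and to do this monotonically one invokes the maximum functionals $\max_{\rho}$ — definable at all finite types — together with the fact, read off from the inductive clauses for $\leq^{*}$, that $\max_{\rho}$ is itself monotone and sends monotone inputs to monotone outputs. Establishing this closure of the monotone terms under the operations needed to combine Herbrand-style alternatives of witnesses is, I expect, the main technical obstacle; the rest is bookkeeping. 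Applying the completed induction to a derivation of $\M\vdash\Psi(\tup a)$ then yields closed monotone $\tup t$ with $(\tilde\forall\tup b)\,\varphi(\tup b,\tup t(\tup b),\tup a)$, where the standardness predicate has vanished because the verifying theory is the \emph{internal} system $\textsf{E-PA}^{\omega}$ and closed terms carry no standardness hypothesis — which is exactly \eqref{consequalty3}.
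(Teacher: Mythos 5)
The paper's own proof of this theorem is a single sentence: it is ``immediate by \cite{fega}*{Theorem 1 and Corollary 1}.'' That is, the paper does not re-derive the soundness theorem for $U_{\st}$ at all; it imports it wholesale from Ferreira--Gaspar as a black box. Your proposal, by contrast, reconstructs the soundness argument from scratch along the standard lines for bounded/majorizability-style functional interpretations: induction on derivations, closed monotone terms threaded through every axiom and rule, the nonstandard axioms $\MAC$, $\textsf{R}^{\omega}$ and $\textsf{MAJ}^{\omega}_{\st}$ absorbed because the interpretation only asks for $\leq^{*}$-bounds rather than exact witnesses, external induction $\textsf{IA}^{\st}$ handled by the recursor with running maxima, and modus ponens/contraction handled with the Howard--Bezem $\max_{\rho}$ functionals. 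This is precisely the shape of the argument in the cited Ferreira--Gaspar paper, so your sketch is correct in substance --- you have simply done the work the present paper delegates to a citation. What each buys is the usual trade-off: the paper's citation is terse and keeps the exposition focused on the applications (Corollary~\ref{consresultcor} onward), while your reconstruction makes the result self-contained at the price of being a sketch whose ``bookkeeping'' --- especially the verification that $\max_{\rho}$ preserves strong majorizability at all types and that the composite terms for modus ponens and contraction remain both closed and monotone --- is exactly the nontrivial content one would have to spell out to turn it into a full proof.
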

\begin{proof}
Immediate by \cite{fega}*{Theorem 1 and Corollary 1}.  
\end{proof}
The notion of `normal form' in this paper will \emph{always} refer to a formula of the form $(\forall^{\st}x)(\exists^{\st}y)\varphi(x, y)$ with $\varphi$ internal, i.e.\ without `st'.  
Note that normal forms are closed under modes ponens inside $\M$.   
We now prove that normal forms are invariant under $U_{\st}$, modulo some strong majorizability predicates.   
\begin{cor}\label{consresultcor}
If for internal $\psi$ the formula $\Psi(\underline{a})\equiv(\forall^{\st}\underline{b})(\exists^{\st}\underline{c})\psi(\underline{x},\underline{y}, \underline{a})$ satisfies \eqref{antecedn3}, then 
$(\tilde{\forall} \underline{b})(\forall x\leq^{*}b)(\exists y\leq^{*}t(b))\psi(\underline{x},\underline{y},\underline{a})$ is proved in \eqref{consequalty3}.  
\end{cor}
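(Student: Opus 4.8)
The plan is to apply Theorem~\ref{consresult3} directly to $\Psi(\underline a)$ and then unwind what the functional interpretation $U_{\st}$ does to a formula that is already in normal form. First I would compute $\Psi(\underline a)^{U_{\st}}$. Since $\psi$ is internal, it has no `st' and hence $\psi^{U_{\st}}\equiv\psi$ (the functional interpretation acts trivially on internal formulas, as in \cite{brie, fega}). The two standard quantifier blocks are then handled by the clauses of $U_{\st}$ for $\forall^{\st}$ and $\exists^{\st}$: the external universal $(\forall^{\st}\underline b)$ becomes a monotone universal quantifier $(\tilde\forall^{\st}\underline b)$ together with an inner unbounded $(\forall x\leq^{*}b)$ coming from the majorant $b$ dominating the witnesses $\underline x$, while $(\exists^{\st}\underline c)$ becomes $(\tilde\exists^{\st}\underline c)$ with an inner $(\exists y\leq^{*}c)$. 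Thus
\[
\Psi(\underline a)^{U_{\st}}\ \equiv\ (\tilde\forall^{\st}\underline b)(\tilde\exists^{\st}\underline c)\big[(\forall x\leq^{*}b)(\exists y\leq^{*}c)\,\psi(\underline x,\underline y,\underline a)\big],
\]
which is of the required shape $(\tilde\forall^{\st}\underline b)(\tilde\exists^{\st}\underline c)\varphi(\underline b,\underline c,\underline a)$ with $\varphi$ internal.

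Next I would feed this into Theorem~\ref{consresult3}: from $\M\vdash\Psi(\underline a)$ we obtain closed monotone terms $\underline t$ with $\textsf{E-PA}^{\omega}\vdash(\tilde\forall\underline b)\,\varphi(\underline b,\underline t(b),\underline a)$, i.e.
\[
\textsf{E-PA}^{\omega}\vdash(\tilde\forall\underline b)\big[(\forall x\leq^{*}b)(\exists y\leq^{*}t(b))\,\psi(\underline x,\underline y,\underline a)\big],
\]
which is exactly the claimed conclusion. The only genuine point requiring care is the bookkeeping in the first step — verifying that the $U_{\st}$-translation of a normal-form formula really does just insert the two bounding quantifiers $(\forall x\leq^{*}b)$ and $(\exists y\leq^{*}t(b))$ and leaves the internal matrix untouched. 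This is where I expect the main obstacle to lie: one must check against the precise definition of $U_{\st}$ in \cite{fega}*{\S2} that the monotonicity side-conditions and the majorization clauses compose correctly through the two quantifier alternations, and that no extra functionals or case distinctions are introduced because $\psi$ is purely internal. Once that is confirmed, the corollary is immediate, and indeed the statement is essentially just Theorem~\ref{consresult3} specialised and made explicit for the normal-form case, which is why a one-line appeal suffices after the translation is pinned down.
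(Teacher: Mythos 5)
Your overall route is exactly the paper's: show that the $U_{\st}$-interpretation of a normal-form formula has the shape required by Theorem~\ref{consresult3}, then invoke that theorem. So the strategic outline is right, and you correctly flag that the only substantive work is the bookkeeping for $\Psi^{U_{\st}}$.

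However, your sketch of that bookkeeping contains a misconception that the paper's proof is precisely designed to address. You write as though $U_{\st}$ has dedicated clauses for $\forall^{\st}$ and $\exists^{\st}$ that ``just insert the two bounding quantifiers $(\forall x\leq^{*}b)$ and $(\exists y\leq^{*}c)$ and leave the internal matrix untouched,'' and you expect that ``no extra functionals or case distinctions are introduced because $\psi$ is purely internal.'' This is not how the definition of $U_{\st}$ works. The relativised quantifiers $\forall^{\st}$ and $\exists^{\st}$ are abbreviations, so computing $\Psi^{U_{\st}}$ requires unfolding them via the clauses for $\st(\cdot)$, $\wedge$, $\to$, $\neg$, and the unbounded quantifiers. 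When one does this, the clauses for $\exists$ and $\to$ (the paper's (vi)--(vii)) introduce higher-type monotone functionals $F, f, f'$ --- the literal output for, say, $(\exists y)(\st(y)\wedge\psi(x,y))$ is the formula \eqref{dongel} in the paper, not the clean $(\tilde\exists^{\st}e_0)(\exists y\leq^{*}e_0)\psi(x,y)$. The heart of the verification is then showing that \eqref{dongel} is \emph{equivalent} to the clean form, which requires defining $e_0 := f_0(F_0(f_0))$, exploiting the monotonicity of $F_0$ and $f_0$ and the transitivity of $\leq^{*}$, and in the converse direction choosing constant functions to recover witnesses for the spurious functionals. This is the ``tedious but straightforward verification'' the paper carries out; it is not vacuous, and your proposal both mischaracterises it and omits it. The conclusion you state is correct, but the step you defer is where the whole content of the corollary lies.
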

\begin{proof}
Clearly, if for $\psi$ and $\Psi$ as given we have 
\be\label{fust}
\Psi(\underline{a})^{U_{\st}}\equiv (\tilde{\forall}^{\st} \tup b)( \tilde{\exists}^{\st} \tup c) (\forall \tup x\leq^{*}b)(\exists \tup y\leq^{*}c )\psi(\tup x, \tup y, \tup a), 
\ee
then the corollary follows immediately from the theorem.  
A tedious but straightforward verification using the clauses (1)-(7) in \cite{fega}*{Def.\ 1} establishes that indeed $\Psi(\underline{a})^{U_{\st}}$ satisfies \eqref{fust}.  
For completeness, we now list these inductive clauses and perform this verification.  
In particular, we now define $U_{\st}$ which assigns to each formula $\Phi$ in the language of $\textsf{E-PA}^{\omega}_{\st}$ two formulas $\Phi^{U_{\st}}$ and $\Phi_{U_{\st}}$ such that $\Phi^{U_{\st}}\equiv (\tilde{\forall}^{\st}  b)( \tilde{\exists}^{\st}  c) \Phi_{U_{\st}}( b,  c)$, with $\Phi_{U_{st}}$ an internal formula, according to the following clauses:
\begin{enumerate}[(i)]
\item $\Phi_{U_{\st}} $ and $\Phi^{U_{\st}}$ are simply $\Phi$, for internal formulas $\Phi$
\item $[\st(x)]^{U_{\st}}\equiv (\tilde{\exists}^{\st}c)(x\leq^{*}c)$.  
\end{enumerate}
For the remaining cases, if the interpretations for $\Phi$ and $\Psi$ are given respectively by $\Phi^{U_{\st}}\equiv (\tilde{\forall}^{\st}  b)( \tilde{\exists}^{\st}  c) \Phi_{U_{\st}}( b,  c)$ and 
$\Psi^{U_{\st}}\equiv (\tilde{\forall}^{\st}  d)( \tilde{\exists}^{\st}  e) \Psi_{U_{\st}}( d,  e)$ then we define:
\begin{enumerate}
\item[(iii)]  $(\Phi\vee \Psi)^{U_{\st}}\equiv (\tilde{\forall}^{\st}  b, d)( \tilde{\exists}^{\st}  c, e) [\Phi_{U_{\st}}( b,  c)\vee \Psi_{U_{\st}}( d,  e)]$,    
\item[(iv)]  $(\neg \Phi)^{U_{\st}}\equiv   (\tilde{\forall}^{\st}f)(\tilde{\exists}^{\st}b)[(\tilde{\exists}b'\leq^{*}b)\neg\Phi_{U_{\st}}(b', f(b'))] $, 
\item[(v)]   $\big((\forall x)\Phi(x)\big)^{U_{\st}}\equiv (\tilde{\forall}^{\st}  b)( \tilde{\exists}^{\st}  c)[(\forall x) \Phi_{U_{\st}}(x,  b,  c)]$, 
\end{enumerate}
where the internal formulas between square brackets are the corresponding lower $U_{\st}$-formulas.  The previous clauses can be used to derive (modulo classical logic) the following interpretations:
\begin{enumerate}
\item[(vi)] $( \Phi\di \Psi)^{U_{\st}}\equiv   (\tilde{\forall}^{\st}f, d)(\tilde{\exists}^{\st}b, e)[(\tilde{\forall}b'\leq^{*}b)\Phi_{U_{\st}}(b', f(b')) \di \Psi_{U_{\st}}(d, e)] $,  
\item[(vii)] $((\exists x)\Phi(x))^{U_{\st}}\equiv (\tilde{\forall}^{\st}F)(\tilde{\exists}^{\st}f)[(\tilde{\exists}f'\leq^{*}f)(\exists x)(\tilde{\forall}b'\leq^{*}F(f') )\Phi_{U_{\st}}(x, b', f'(b')) ]$. 
\end{enumerate}
Additionally, it is noted in \cite{fega}*{\S3} that conjunction can be given the following interpretation:
\begin{enumerate}
\item[(viii)] $(\Phi\wedge \Psi)^{U_{\st}}\equiv (\Phi^{U_{\st}}\wedge\Psi^{U_{\st}})$.  
\end{enumerate}
Thus, let $\psi$ be internal and consider the following formula
\[
(\st(y)\wedge \psi(x,y))^{U_{\st}}\equiv (\tilde{\exists}^{\st}c)[y\leq^{*} c \wedge \psi(x,y)],    
\]
which yields that $\big((\exists y)(\st(y)\wedge \psi(x,y))\big)^{U_{\st}}$ is exactly
\be\label{dongel}
 (\tilde{\forall}^{\st}F)(\tilde{\exists}^{\st}f)\big[(\tilde{\exists}f'\leq^{*}f)(\exists y)(\tilde{\forall}b'\leq^{*}F(f') )[y\leq^{*} f'(b')\wedge \psi(x,y)] \big].
\ee
Take $F_{0}$ and the associated $f_{0}$ as in \eqref{dongel}, and define $e_{0}:=f_{0}(F_{0}(f_{0}))$.  Using the monotonicity of $F_{0}$ and $ f_{0}$ and the transitivity of $\leq^{*}$, we obtain: 
\be\label{dingel}
\big((\exists y)(\st(y)\wedge \psi(x,y))\big)^{U_{\st}}\di (\tilde{\exists}^{\st}e_{0})(\exists y\leq^{*}e_{0})\psi(x,y).    
\ee
For the reverse implication in \eqref{dingel}, for any monotone and standard $F$, take $f, f'$ to be the function which is constant $e_{0}$, the latter originating from the consequent of \eqref{dingel}.   
In this way, we obtain \eqref{dongel} and an equivalence in \eqref{dingel}.  Next, note that $[\st(x)]^{U_{\st}}\equiv (\tilde{\exists}^{\st}c)(x\leq^{*}c)$, and $\big[\st(x)\di (\exists y)(\st(y)\wedge \psi(x,y))\big]^{U_{\st}}$ is thus
\[
 (\tilde{\forall}^{\st}f, d)(\tilde{\exists}^{\st}b, e_{0})[(\tilde{\forall}b'\leq^{*}b)(x\leq^{*}f(b')\di (\exists y\leq^{*}e_{0})\psi(x,y)) ].     
\]  
Finally, $\big[(\forall x)(\st(x)\di (\exists y)(\st(y)\wedge \psi(x,y))\big]^{U_{\st}}$ is the formula
\be\label{diverg}
 (\tilde{\forall}^{\st}f )(\tilde{\exists}^{\st}b, e_{0})(\forall x)[(\tilde{\forall}b'\leq^{*}b)(x\leq^{*}f(b')\di (\exists y\leq^{*}e_{0})\psi(x,y)) ].     
\ee
Now take some standard and monotone $x_{0}$ of the same type as the variable $x$ and apply \eqref{diverg} to $f_{0}$, the constant $x_{0}$ function.   
We obtain 
\[
 (\tilde{\forall}^{\st}x_{0} )(\tilde{\exists}^{\st} e_{0})(\forall x)[(x\leq^{*}x_{0}\di (\exists y\leq^{*}e_{0})\psi(x,y)) ],      
\]
which is exactly as required, and we are done.  
\end{proof}
In light of the previous theorem, normal forms are invariant under $U_{\st}$ modulo some 
extra appearances of the {strong majorizability} predicate.  As noted in \cite{fega}*{\S2}, objects of type zero and one are strongly majorisable, and are thus not affected (much) by the aforementioned addition of the strong majorizability predicate.  
Thus, it seems the results from \cite{sambon} could also go through for $\M$ as long the standard variables in the normal form are of low type.  We shall make this more precise in the following sections.      

\medskip

Finally, as studied in \cite{fega}*{\S4}, the bounded functional interpretation by Ferreira and Oliva from \cite{folieke} can be seen as a nonstandard interpretation where the nonstandard part is concealed.  

\subsection{Notations in $\M$}
In this section, we introduce notations relating to $\M$.  

\medskip

First of all, we use the same notations as in \cite{fega}, some of which we repeat here.  
\begin{rem}[Notations]\label{notawin}\rm
We write $(\forall^{\st}x^{\tau})\Phi(x^{\tau})$ and $(\exists^{\st}x^{\sigma})\Psi(x^{\sigma})$ as short for 
$(\forall x^{\tau})\big[\st(x^{\tau})\di \Phi(x^{\tau})\big]$ and $(\exists^{\st}x^{\sigma})\big[\st(x^{\sigma})\wedge \Psi(x^{\sigma})\big]$.     
We also write $(\forall x^{0}\in \Omega)\Phi(x^{0})$ and $(\exists x^{0}\in \Omega)\Psi(x^{0})$ as short for 
$(\forall x^{0})\big[\neg\st(x^{0})\di \Phi(x^{0})\big]$ and $(\exists x^{0})\big[\neg\st(x^{0})\wedge \Psi(x^{0})\big]$.  Furthermore, if $\neg\st(x^{0})$ (resp.\ $\st(x^{0})$), we also say that $x^{0}$ is `infinite' (resp.\ finite) and write `$x^{0}\in \Omega$'.  
Finally, a formula $A$ is `internal' if it does not involve $\st$, and $A^{\st}$ is defined from $A$ by appending `st' to all quantifiers (except bounded number quantifiers).    
\end{rem}
Secondly, we will use the usual notations for rational and real numbers and functions as introduced in \cite{kohlenbach2}*{p.\ 288-289} (and \cite{simpson2}*{I.8.1} for the former).  
We shall (sparingly) use the \emph{symbolic} notation `$\R$' for the set of real numbers.  
\begin{defi}[Real number]\label{keepintireal}\rm
A (standard) real number $x$ is a (standard) fast-converging Cauchy sequence $q_{(\cdot)}^{1}$, i.e.\ $(\forall n^{0}, i^{0})(|q_{n}-q_{n+i})|<_{0} \frac{1}{2^{n}})$.  
We freely make use of Kohlenbach's `hat function' from \cite{kohlenbach2}*{p.\ 289} to guarantee that every sequence $f^{1}$ can be viewed as a real.  We also use the notation $[x](k):=q_{k}$ for the $k$-th approximation of real numbers.    
Two reals $x, y$ represented by $q_{(\cdot)}$ and $r_{(\cdot)}$ are \emph{equal}, denoted $x=_{\R}y$, if $(\forall n)(|q_{n}-r_{n}|\leq \frac{1}{2^{n}})$. Inequality $<_{\R}$ is defined similarly and the subscript $\R$ is often omitted.          
Functions from reals to reals are represented by functionals $\Phi^{1\di 1}$ such that 
\be\label{REXT}\tag{\textsf{RE}}
(\forall x^{1}, y^{1})(x=_{\R}y\di \Phi(x)=_{\R}\Phi(y)), 
\ee
i.e.\ equal reals are mapped to equal reals.  The property \eqref{REXT} is abbreviated by $\Phi:\R\di \R$.  We also write $x\approx y$ if $(\forall^{\st} n)(|q_{n}-r_{n}|\leq \frac{1}{2^{n}})$ and $x\gg y$ if $x>y\wedge x\not\approx y$.  
Finally, sets are denoted $X^{1}, Y^{1}, Z^{1}, \dots$ and are given by their characteristic functions $f^{1}_{X}$, i.e.\ $(\forall x^{0})[x\in X\asa f_{X}(x)=1]$, where $f_{X}^{1}$ is assumed to be binary.      
\end{defi}
Thirdly, we use the usual extensional notion of equality.  
\begin{rem}[Equality]\label{equ}\rm
The system $\M$ includes equality between natural numbers `$=_{0}$' as a primitive.  Equality `$=_{\tau}$' for type $\tau$-objects $x,y$ is then defined as follows:
\be\label{aparth}
[x=_{\tau}y] \equiv (\forall z_{1}^{\tau_{1}}\dots z_{k}^{\tau_{k}})[xz_{1}\dots z_{k}=_{0}yz_{1}\dots z_{k}]
\ee
if the type $\tau$ is composed as $\tau\equiv(\tau_{1}\di \dots\di \tau_{k}\di 0)$.
In the spirit of Nonstandard Analysis, we define `approximate equality $\approx_{\tau}$' as follows:
\be\label{aparth2}
[x\approx_{\tau}y] \equiv (\forall^{\st} z_{1}^{\tau_{1}}\dots z_{k}^{\tau_{k}})[xz_{1}\dots z_{k}=_{0}yz_{1}\dots z_{k}]
\ee
with the type $\tau$ as above.  
All systems under consideration include the \emph{axiom of extensionality} for all $\varphi^{\rho\di \tau}$ as follows:
\be\label{EXT}\tag{\textsf{E}}  
(\forall  x^{\rho},y^{\rho}) \big[x=_{\rho} y \di \varphi(x)=_{\tau}\varphi(y)   \big].
\ee
However, as noted in \cite{brie}*{p.\ 1973}, the so-called axiom of \emph{standard} extensionality \eqref{EXT}$^{\st}$ is problematic.  The latter cannot be included in $\M$ for the same reason this axiom cannot be included in the systems from \cite{brie}.  
Finally, we always refer to \eqref{REXT} from the previous remark as `extensionality on the reals' to distinguish it from \eqref{EXT}.  
  
\end{rem}

\subsection{Non-standard Nonstandard Analysis in $\M$}\label{NSANSA}
In this section, we prove some (mostly non-classical) basic theorems inside the system $\M$.  
\begin{thm}[$\M$]\label{OMG}
Every binary sequence is standard.  
The same holds for sequences of binary sequences. 
Every sequence $\alpha^{1}$ which is standard for standard index, i.e.\ $(\forall^{\st}n)(\exists^{\st}m)(\alpha(n)=m)$, has a standard part, i.e.\ $(\exists^{\st}\beta^{1})(\alpha\approx_{1} \beta)$.  
\end{thm}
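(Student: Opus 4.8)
The plan is to derive all three assertions from one principle built into $\T^{*}_{\st}$: anything which is strongly majorized by a standard object is itself standard (axiom~(2) of Definition~\ref{debs}), combined with the facts that constant functions are standard, being closed terms (axiom~(3)), and are monotone. For the first assertion, if $\alpha^{1}$ is a binary sequence then $\alpha\leq_{1}^{*}\mathbf 1$, where $\mathbf 1:=\lambda n.1$: unwinding the definition of $\leq_{1}^{*}$, one conjunct merely asserts monotonicity of $\mathbf 1$ and the other asserts $\alpha(n)\leq 1$ for all $n$, which holds by hypothesis; since $\st(\mathbf 1)$, axiom~(2) gives $\st(\alpha)$. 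The second assertion is the same argument one type up: a sequence of binary sequences, whether presented as a type $0\di 1$ object or coded as a single type-$1$ function via pairing, is strongly majorized by the appropriate (iterated) constant-one function, again a closed term, so axioms~(2)--(3) apply verbatim. I expect no difficulty here beyond writing out the (trivial) monotonicity side-conditions hidden in $\leq^{*}$ at types $1$ and $0\di 1$.

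The third assertion is the one requiring real work. The hypothesis $(\forall^{\st}n)(\exists^{\st}m)(\alpha(n)=m)$ is, because every type-$0$ object is monotone, literally an instance of the antecedent of $\MAC$ with internal matrix $\alpha(n)=m$; so I would apply Monotone Choice to obtain a \emph{standard monotone} $f^{1}$ with $(\forall^{\st}n)(\alpha(n)\leq f(n))$. Now put $\beta:=\lambda n.\min(\alpha(n),f(n))$, a term with parameters $\alpha$ and the standard $f$. Then $\beta\leq_{1}^{*}f$ — the monotonicity conjunct holds since $f$ is monotone, and $\beta(n)\leq f(n)$ for \emph{all} $n$ by construction — so $\st(\beta)$ follows from $\st(f)$ by axiom~(2). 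Finally $\alpha\approx_{1}\beta$: by the definition of $\approx_{1}$ in Remark~\ref{equ} this means $(\forall^{\st}n)(\alpha(n)=\beta(n))$, and for standard $n$ we have $\alpha(n)\leq f(n)$ by the choice of $f$, whence $\beta(n)=\min(\alpha(n),f(n))=\alpha(n)$. Thus $(\exists^{\st}\beta^{1})(\alpha\approx_{1}\beta)$, as required.

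The main conceptual step — and where one must not be lazy — is that, unlike in the first two claims, one should not expect $\alpha$ itself to be standard: it need not be, and only a standard \emph{part} is available. The mechanism is therefore to extract from the hypothesis, via $\MAC$, a standard monotone majorant $f$, and then to \emph{truncate} $\alpha$ at $f$ so as to land below a standard object; crucially, $\st(\beta)$ here comes from strong majorizability (axiom~(2)), not from closure of $\st$ under application (axiom~(4)), since $\alpha$ is non-standard. One also has to check that $\MAC$ genuinely applies — its monotone quantifiers $\tilde\forall^{\st},\tilde\exists^{\st}$ collapse to the ordinary relativized ones at type $0$, so the theorem's hypothesis is exactly the antecedent of a valid instance, and its witness $f$ is delivered as a standard monotone function even though it ``encodes'' information about the non-standard $\alpha$. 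The remainder is routine unwinding of $\leq^{*}$ and of $\approx_{1}$. (To sidestep any worry about monotonicity of the $\MAC$-output, one may replace $f$ by $n\mapsto\max_{k\le n}f(k)$, still standard as the value of a closed term on the standard argument $f$.)
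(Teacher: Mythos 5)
Your argument for the first two claims is the paper's argument verbatim: majorize by the closed-term constant-one function (or its iterate) and invoke axiom~(2) of Definition~\ref{debs}. For the third claim, your proof is correct but takes a genuinely different route. The paper first observes that the $0\di1$-object $\beta'(n,m):=\mathbb{1}[\alpha(n)=m]$ is a sequence of binary sequences, hence standard by the \emph{second} claim, and only then applies $\MAC$ to $(\forall^{\st}n)(\exists^{\st}m)(\beta'(n,m)=1)$ — an instance whose matrix has only the standard parameter $\beta'$; the standard part is then recovered as a closed term applied to the standard objects $f,\beta'$, i.e.\ via axiom~(4). You instead apply $\MAC$ directly to $\alpha(n)=m$, accepting the non-standard $\alpha$ as a free parameter of the internal matrix (which the schema, as stated in Definition~\ref{nba`'}, permits — it has no standardness restriction on parameters), and then obtain standardness of the truncation $\min(\alpha(\cdot),f(\cdot))$ via majorization and axiom~(2) rather than via application closure. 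Your route is more self-contained — it does not need the second claim as a lemma — and nicely reuses the same standardness-from-majorizability mechanism throughout all three parts; the paper's route keeps all $\MAC$-parameters standard, which a cautious reader may find more visibly in scope of the schema. Both are valid, and your remark that the resulting $f$ ``encodes information about the non-standard $\alpha$'' is exactly the point one should be alert to when choosing between the two.
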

\begin{proof}
Every binary sequence $\alpha^{1}$ by definition satisfies $\alpha\leq_{1}11\dots$, and the latter implies $\alpha\leq_{1}^{*}11\dots$ by definition. 
As $11\dots$ is standard (which follows from the fact that all recursor constants are standard), the second basic axiom of $\M$ implies that $\alpha$ is also standard.  
Similarly, every sequence of binary sequences is majorized by the sequence of sequences which outputs $11\dots$ for every input.  Finally, let $\alpha^{1}$ be a sequence which is standard for standard indices.  
Define the (standard by the previous) binary sequence of sequences $\beta^{0\di 1}$ by $\beta(n, m)=1$ if $\alpha(n)=m$, and zero otherwise.  
Now apply $ \textsf{mAC}_{\st}^{\omega}$ to $(\forall^{\st}n)(\exists^{\st}m)\beta(n, m)=1$ to obtain standard $f^{1}$ such that $(\forall^{\st}n)(\exists m\leq f(n))\beta(n, m)=1$.  It is now trivial to define the standard part of $\alpha$ using $f$ and $\beta$.  
\end{proof}
Let $\WKL$ be the well-known \emph{weak K\"onig's lemma} (See e.g.\ \cite{simpson2}*{IV}).  
\begin{cor}\label{fukwkl}
The system $\M$ proves $\WKL^{\st}$.  
\end{cor}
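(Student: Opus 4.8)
The plan is to unfold $\WKL^{\st}$ and then to cash in Theorem~\ref{OMG}: any infinite path we manage to construct is a binary sequence, hence automatically standard, so the only genuine work is to produce a path that is correct on all standard levels. Recall that $\WKL^{\st}$ is obtained by appending `st' to all quantifiers; spelled out, it asserts that for every standard $T^{1}$ satisfying the decorated tree property $(\forall^{\st}\sigma,\tau)[(\tau\subseteq\sigma\wedge \sigma\in T)\to \tau\in T]$ and the decorated infinitude property $(\forall^{\st}n)(\exists^{\st}\sigma)[|\sigma|=n\wedge \sigma\in T]$, there is a standard binary sequence $\beta$ with $(\forall^{\st}n)(\bar{\beta}n\in T)$.

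I would first record two elementary facts about $\M$. \emph{(i) $\M$ proves $\Omega\neq\emptyset$, i.e.\ $(\exists n^{0})\neg\st(n^{0})$.} Assuming $(\forall n^{0})\st(n^{0})$, the internal formula $x=_{0}y$ satisfies the antecedent of Realization $\textsf{R}^{\omega}$ (take $y:=x$), and the conclusion of $\textsf{R}^{\omega}$ then yields a standard $z$ with $(\forall x^{0})(x\leq z)$, which is impossible; hence $\neg(\forall n^{0})\st(n^{0})$. \emph{(ii) $\M$ proves overspill:} for internal $\psi$, $(\forall^{\st}n^{0})\psi(n)\to(\exists n^{0})[\neg\st(n^{0})\wedge\psi(n)]$. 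This follows from (i) together with the internal least-number principle of $\textsf{E-PA}^{\omega}$: if $\{n:\psi(n)\}$ is bounded then its greatest element sits above every standard number and is therefore non-standard while still satisfying $\psi$; if it is unbounded, pick an element above a non-standard number supplied by (i).

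The core of the argument will be the non-standard proof of K\"onig's lemma, with one twist. Fix a standard $T$ as above. Every binary string of length $n$ is majorized by a closed term applied to $n$, hence is standard whenever $n$ is, by the basic axioms of $\M$; feeding this into the two decorated hypotheses shows that the \emph{internal} formula $\psi(n)$, expressing that $T$ contains some binary string of length $n$ all of whose initial segments also lie in $T$, holds for every standard $n$. Overspill applied to $\psi$ produces a non-standard $M$ and a binary string $s$ of length $M$, all of whose initial segments lie in $T$. Let $\beta$ be $s$ padded with zeros. Then $\beta$ is a binary sequence, hence standard by Theorem~\ref{OMG}; and every standard $n$ satisfies $n<M$, since otherwise clause (2) of the basic axioms would make $M$ standard, so $\bar{\beta}n$ is an initial segment of $s$ and therefore lies in $T$. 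Thus $\beta$ witnesses the conclusion of $\WKL^{\st}$.

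The step I expect to be the main obstacle --- and the reason for the twist above --- is the crossing of the standard/non-standard divide: the decorated tree property constrains $T$ only on \emph{standard} nodes, whereas the path we build runs through the node $s$ of non-standard length $M$, so one cannot simply invoke ``$T$ is a tree'' to pass from $s$ down to its initial segments. Folding the clause ``all initial segments lie in $T$'' into $\psi$ \emph{before} applying overspill is exactly what resolves this: the node obtained at non-standard height then already carries the needed information about all of its initial segments, in particular its standard ones, which is precisely what the conclusion of $\WKL^{\st}$ demands.
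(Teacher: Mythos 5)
Your argument is correct and follows essentially the same route as the paper's one-line proof: overspill an internal formula to reach a node of nonstandard length, pad it with zeros, and invoke Theorem~\ref{OMG} to see that the resulting binary sequence is standard (the paper cites its later Theorem~\ref{koverflow} for overflow rather than re-deriving it from $\textsf{R}^{\omega}$ and the least-number principle as you do). The one genuine improvement you make is folding the clause ``all initial segments lie in $T$'' into the formula being overspilled: since $\WKL^{\st}$ only grants the tree property for \emph{standard} nodes, the paper's appeal to ``$T$ is a binary tree'' to pass from a nonstandard node down to its standard initial segments is glossed over, and your $\psi(n)$ handles it explicitly.
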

\begin{proof}
Let $T$ be any binary tree such that $(\forall^{\st}n)(\exists \alpha)(|\alpha|=n\wedge \alpha\in T)$.  
By overflow (See Theorem \ref{koverflow} or use minimisation), there is a binary sequence $\sigma$ of nonstandard length in $T$, and $\sigma*00\dots$ is standard by the theorem, i.e.\ the consequent of $\WKL^{\st}$ follows for $T$.  
\end{proof}

\begin{cor}[$\M$]\label{XYX}
Every standard extensional type two functional is \emph{nonstandard} uniformly continuous on Cantor space, i.e.\ $(\forall \varphi^{2})(\forall f, g\leq_{1}1)(f\approx_{1}g\di \varphi(f)=\varphi(g))$.
The same holds for any standard compact subspace of Baire space.  
\end{cor}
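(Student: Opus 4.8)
The plan is to reduce the statement about type-two functionals on Cantor space to the fact, already established in Theorem~\ref{OMG}, that every binary sequence is standard, together with a standardness-closure argument for the \emph{values} of the functional. First I would fix a standard extensional $\varphi^{2}$ and binary sequences $f,g\leq_{1}1$ with $f\approx_{1}g$, meaning $(\forall^{\st}n)(f(n)=g(n))$. By Theorem~\ref{OMG} both $f$ and $g$ are standard. The fourth basic axiom of $\M$ (clause (4) of $\T^{*}_{\st}$, namely $\st^{\sigma\di\tau}(z)\di(\st^{\sigma}(x)\di\st^{\tau}(zx))$) applied to the standard $\varphi^{2}$ and standard $f^{1}$ yields that $\varphi(f)^{0}$ is standard, and likewise $\varphi(g)^{0}$ is standard. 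So the only thing left is to show $\varphi(f)=_{0}\varphi(g)$.

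The key step is to derive $f=_{1}g$ from $f\approx_{1}g$ together with standardness. Since $f$ is standard and of type $1$, and $f\approx_{1}g$ says $(\forall^{\st}n)(f(n)=g(n))$, I would argue as follows: the set $A=\{n : f(n)\neq g(n)\}$ (coded by its characteristic function, which is a binary sequence hence standard by Theorem~\ref{OMG}) contains no standard element; but if $A$ were nonempty, say $n_{0}\in A$, then by external induction applied to the statement "$(\forall k\leq n)(k\notin A)$" — or more directly, since $A$ nonempty implies it has a least element $\mu n(n\in A)$, and this least element, being definable from standard data $f,g$... — one shows $A$ must be empty. Cleanest: if $f\neq_{1}g$ then there is some $n$ with $f(n)\neq g(n)$; take the least such $n=:m$. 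Then $(\forall k<m)(f(k)=g(k))$, and $m$ is the \emph{unique} such witness, hence (being uniquely determined by the standard objects $f,g$, or via a closed term applied to them) $m$ is standard. But then $f(m)=g(m)$ by $f\approx_{1}g$, a contradiction. Once $f=_{1}g$, extensionality \eqref{EXT} for $\varphi$ gives $\varphi(f)=_{0}\varphi(g)$, which, combined with the standardness of both values, upgrades trivially to the claimed conclusion; in fact $\varphi(f)=\varphi(g)$ is already what is asserted.

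For the second sentence, I would replace "$f,g\leq_{1}1$" by "$f,g$ both belong to $K$", where $K^{1}$ is a standard compact (i.e.\ closed and bounded, represented as in \cite{simpson2}) subset of Baire space; the point is that membership in such a $K$, for standard $K$, still forces $f$ and $g$ to be standard. Here one uses that a standard compact subspace of Baire space is (up to standard homeomorphism, or directly by the bounding function) contained in $\prod_n \{0,1,\dots,h(n)\}$ for a standard $h^{1}$, and the analogue of Theorem~\ref{OMG} for such bounded sequences — which follows the same way, since any $\alpha\leq_{1}h$ with $h$ standard satisfies $\alpha\leq_{1}^{*}h$, so $\alpha$ is standard by basic axiom~(2). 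The rest of the argument (least point of disagreement is standard, hence no disagreement, hence extensionality applies) is verbatim.

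The main obstacle I anticipate is the "least witness of disagreement is standard" step: one must be careful that $\M$ actually proves this, since it is an external statement and we only have the \emph{external} induction axiom \textsf{IA}$^{\st}$ at our disposal, not full induction with standardness predicates. The safe route is to avoid induction entirely and instead invoke a closed term: the least-element functional $\mu$ is a closed term of $\M$ (type $1\di 0$ on the binary sequence coding $A$), and closed terms are standard by basic axiom~(3), so $\mu$ applied to the standard characteristic function of $A$ is standard by basic axiom~(4) — provided $A$ is nonempty. This sidesteps any delicate induction and reduces everything to the basic axioms plus Theorem~\ref{OMG} and \eqref{EXT}. I would present the proof in exactly that order: standardness of $f,g$; standardness of the disagreement set; the $\mu$-term argument forcing the set empty; extensionality to conclude; then the one-line remark extending to standard compact subspaces.
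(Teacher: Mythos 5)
Your proposal goes wrong at exactly the step you flagged as the main obstacle, and neither of your suggested repairs closes the gap. The claim that $f\approx_1 g$ together with standardness of $f,g\leq_1 1$ yields $f=_1 g$ is false in $\M$, and in fact refutable there: by overflow (Theorem~\ref{koverflow} applied to a trivially true formula) $\M$ proves there is a nonstandard number $N^0$, and the binary sequence $h$ with $h(n)=0$ for $n<N$ and $h(n)=1$ otherwise satisfies $h\approx_1 00\dots$ while $h\ne_1 00\dots$, even though both $h$ and $00\dots$ are standard by Theorem~\ref{OMG}. Standardness in $\M$ is a formal predicate, and precisely because Theorem~\ref{OMG} makes it so permissive at type $1$, standard sequences agreeing on all standard indices need not agree everywhere. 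Your first repair --- ``uniquely determined by standard data, hence standard'' --- is not a principle of $\M$: the nonstandard $N$ above is itself the unique least point of disagreement between the standard sequences $h$ and $00\dots$. Your second repair fails because unbounded search $\mu$ is not a closed term of $\M$: it is a partial, discontinuous functional, while every term of G\"odel's $\textsf{T}$ denotes a total continuous functional, so axioms (3)--(4) of $\T^{*}_{\st}$ do not apply to it.

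The underlying issue is a misreading of ``standard extensional''. In the context of Remark~\ref{equ}, which singles out the problematic, non-derivable axiom of standard extensionality $\eqref{EXT}^{\st}$, the phrase means that $\varphi$ satisfies $\eqref{EXT}^{\st}$, i.e.\ $(\forall^{\st}f^1,g^1)[f\approx_1 g\di \varphi(f)=_0\varphi(g)]$ (noting that $\approx_0$ is just $=_0$); it does not mean ``standard and extensional as in \eqref{EXT}''. With the terminology in place the paper's proof really is immediate: the conclusion $(\forall f,g\leq_1 1)[f\approx_1 g\di\varphi(f)=\varphi(g)]$ is just the restriction of standard extensionality for $\varphi$ to Cantor space, and Theorem~\ref{OMG} makes the `st' on the $f,g$ quantifiers vacuous there since every binary sequence is standard. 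No separate standardness-of-values or equality argument is needed. Your remark on the compact-subspace case is structurally fine (replace $\leq_1 1$ by $\leq_1 h_0$ for standard $h_0$ and use axiom (2), as you do), but it inherits the same error from the first part.
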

\begin{proof}
The first part is immediate as the theorem implies that every binary sequence is standard.  As a standard compact subspace of Baire space is given by $\{f^{1}:f\leq_{1}h_{0} \}$ for some fixed standard $h_{0}$, the second part now follows in the same way as in the proof of the theorem.  
\end{proof}
By way of example, every function on Cantor space which is continuous in the sense of Reverse Mathematics (See \cite{simpson2}*{II.6.2}) relative to `st', is also standard extensional.   

\medskip

By the previous results, it is clear\footnote{Apply $\paai$ to $\alpha_{0}\approx_{1}00\dots $ for $\alpha_{0}(n)=0$ for $n\leq M\in \Omega$ and $1$ otherwise.} that $\M+\paai$ is inconsistent.  The latter is a special case of Nelson's axiom \emph{Transfer}, and defined as follows:
\be\tag{$\paai$}
(\forall^{\st}f^{1})\big[(\forall^{\st}n^{0})f(n)=0\di (\forall m)f(m)=0\big].
\ee
By \cite{bennosam}*{Cor.\ 12}, $\paai$ is equivalent to the following `Turing jump functional':
\be\tag{$\exists^{2}$}  
(\exists \varphi^{2})(\forall f)[\varphi(f)=0\asa (\exists n)f(n)=0].
\ee
Unsurprisingly, $(\exists^{2})$ is inconsistent with Kohlenbach's uniform boundedness principles from \cite{kohlenbach3}*{\S12}, and $\M$ validates the latter principles as noted in \cite{fega}*{\S1-2}.  

\medskip

From the Reverse Mathematics of weak K\"onig's lemma (See \cite{simpson2}*{IV}), we know that uniform continuity and the existence of an upper bound are equivalent (for continuous functions on a compact space).  
Thus, the following theorem should not come as such a surprise in light of Corollary \ref{XYX}, although there is no continuity requirement. 
\begin{thm}[$\M$]\label{ciro}
Every standard $\varphi^{2}$ has a standard upper bound on Cantor space \(or on any standard compact space\).   
\end{thm}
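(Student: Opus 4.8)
The plan is to massage the statement into a normal form $(\forall f^{1})(\exists^{\st}n^{0})\psi(f,n)$ with $\psi$ internal, and then apply the Realization axiom $\textsf{R}^{\omega}$ to obtain a uniform standard bound. First I would fix a standard $\varphi^{2}$ and treat Cantor space. For any $f^{1}$ I claim $(\exists^{\st}n^{0})[f\leq_{1}1\di \varphi(f)\leq_{0}n]$: if $f\not\leq_{1}1$ the implication is vacuously true and $n:=0$ suffices; if $f\leq_{1}1$ then $f$ is standard by Theorem \ref{OMG}, so $\varphi(f)$ is a standard natural number by the fourth basic axiom of $\T^{*}_{\st}$ (i.e.\ $\st(\varphi)\wedge \st(f)\di \st(\varphi(f))$), and $n:=\varphi(f)$ suffices. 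Hence $(\forall f^{1})(\exists^{\st}n^{0})\psi(f,n)$ for the internal formula $\psi(f,n)\equiv[f\leq_{1}1\di\varphi(f)\leq_{0}n]$.

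Next I would apply $\textsf{R}^{\omega}$ to this normal form, yielding a monotone standard $z$ with $(\forall f^{1})(\exists n\leq^{*}z)\psi(f,n)$. Since the witness $n$ has type $0$, one may take $z$ of type $0$ as well, so that `$n\leq^{*}z$' is simply `$n\leq_{0}z$' and the monotonicity of $z$ is automatic. Unwinding, this gives $(\exists^{\st}z^{0})(\forall f\leq_{1}1)(\varphi(f)\leq_{0}z)$, i.e.\ the desired standard upper bound on Cantor space. As with Theorem \ref{OMG} and its corollaries, this is genuinely non-classical: it is the standardness of all binary sequences, rather than any continuity of $\varphi$, that makes the argument run.

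For an arbitrary standard compact subspace of Baire space, which (as in the proof of Corollary \ref{XYX}) has the form $\{f^{1}:f\leq_{1}h_{0}\}$ for some standard $h_{0}^{1}$, I would first replace $h_{0}$ by its running maximum $h_{0}^{+}(m):=\max_{k\leq m}h_{0}(k)$: this $h_{0}^{+}$ is standard (being the value of a closed term at the standard $h_{0}$) and non-decreasing, and $f\leq_{1}h_{0}$ implies $f\leq_{1}^{*}h_{0}^{+}$, so $f$ is standard by the second basic axiom of $\T^{*}_{\st}$. The Cantor-space argument then goes through verbatim with `$f\leq_{1}1$' replaced by `$f\leq_{1}h_{0}$'. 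I do not expect any real obstacle here; the one point that needs care is exactly this last step, since Theorem \ref{OMG} delivers standardness only for sequences below $1$, so one must pass to a standard \emph{monotone} majorant of $h_{0}$ before the second basic axiom can be invoked to conclude that every element of the compact space is standard.
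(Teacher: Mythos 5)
Your proof is correct, but it takes a genuinely different route from the paper. The paper's proof is a direct application of the majorizability axiom $\textsf{MAJ}^{\omega}_{\st}$: a standard $\varphi^{2}$ has a standard majorant $\psi^{2}$ with $\varphi\leq^{*}_{2}\psi$, and unwinding the definition of $\leq^{*}_{2}$ at $g_{0}=11\dots$ immediately yields $(\exists^{\st}n_{0})(\forall f\leq_{1}1)(\varphi(f)\leq_{0}n_{0})$. Your argument instead routes through Theorem \ref{OMG} (every binary sequence is standard), the fourth basic axiom (standard applied to standard is standard), and the Realization axiom $\textsf{R}^{\omega}$. In fact, your argument is essentially the technique the paper uses for the \emph{next} result, Theorem \ref{soohee}, where the conclusion is strengthened to standardness of $\varphi\!\upharpoonright\!2^{\N}$ itself; so you have, in effect, proved a weak form of \ref{soohee} and derived \ref{ciro} from it. Each approach buys something: the paper's is a one-line read-off from $\textsf{MAJ}^{\omega}_{\st}$ and mirrors the classical proof of ``every majorisable functional is bounded on Cantor space''; yours makes the specifically non-classical input (all binary sequences are standard) explicit and so fits more naturally into the paper's later narrative. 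Your handling of the general compact case is also slightly more careful than the paper's: the paper just says ``the second part now follows in the same way'' in Corollary \ref{XYX}, whereas you correctly flag that $f\leq_{1}h_{0}$ does not by itself give $f\leq^{*}_{1}h_{0}$, and that one must first pass to the monotone majorant $h_{0}^{+}$ (standard because it is the value of a closed term at $h_{0}$) before the second basic axiom applies. That is exactly the right fix, and worth noting explicitly.
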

\begin{proof}
The axiom $\textsf{MAJ}^{\omega}_{\st}$ guarantees that every standard $\varphi^{2}$ has a standard majorant, i.e.\ $(\forall^{\st}\varphi^{2})(\exists^{\st}\psi^{2})(\varphi\leq_{2}^{*}\psi)$.  By definition, the latter formula implies that 
\[
(\forall^{\st}\varphi^{2})(\exists^{\st}\psi^{2})(\forall f^{1}\leq^{*}_{1}g )(\varphi(f)\leq_{0}\psi(g)),   
\]
and taking $n_{0}=\psi(g_{0})$ for $g_{0}=11\dots$, we obtain $(\forall^{\st}\varphi^{2})(\exists^{\st}n_{0})(\forall f^{1}\leq_{1}1 )(\varphi(f)\leq_{0}n_{0})$,   
which is exactly as required.  
\end{proof}
Next, we prove a version of the Standard part principle for type two functionals.  
Define $\b^{1\di 1}$ by $\b(f)(k)=0$ if $f(k)=0$ and $1$ otherwise;  We refer to $\varphi(\b(\cdot))$ as \emph{the restriction to Cantor space} of $\varphi^{2}$.  
\begin{thm}[$\M$]\label{soohee}
Every type two functional restricted to Cantor space which maps standard binary sequences to standard numbers, is standard.  
\end{thm}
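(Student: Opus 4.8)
The plan is to deduce the standardness of the restricted functional from two features of $\M$: the Realization axiom $\textsf{R}^{\omega}$, and the closure of the standardness predicate under strong majorizability (basic axiom (2) of $\T^{*}_{\st}$). Write $\Phi$ for the type $1\di 0$ functional $\lambda f^{1}.\varphi(\b(f))$, i.e.\ the restriction of the given $\varphi^{2}$ to Cantor space; by hypothesis $\Phi$ maps standard binary sequences to standard numbers, and we must show $\st^{1\di 0}(\Phi)$. The crucial point to keep in mind is that $\varphi$ itself is \emph{not} assumed standard, so $\st^{1\di 0}(\Phi)$ cannot simply be read off from closure of $\st$ under application (basic axiom (4)); instead the argument proceeds via a \emph{standard bound} for $\Phi$ on all of Baire space.

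First I would invoke Theorem~\ref{OMG}: since every binary sequence is standard, $\b(f)$ is a standard binary sequence for arbitrary $f^{1}$, and so the hypothesis applied to $\b(f)$ gives $(\exists^{\st}m^{0})(\varphi(\b(f))=_{0}m)$. As $f$ was arbitrary, $(\forall f^{1})(\exists^{\st}m^{0})\,\theta(f,m)$ holds for the internal formula $\theta(f,m):\equiv(\varphi(\b(f))=_{0}m)$, which carries $\varphi$ as a parameter. Applying $\textsf{R}^{\omega}$ then yields a standard $z^{0}$ with $(\forall f^{1})(\exists m\leq_{0}z)(\varphi(\b(f))=_{0}m)$, that is $(\forall f^{1})(\varphi(\b(f))\leq_{0}z)$. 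Passing from the hypothesis (which is stated only for standard binary sequences) to the unrestricted $(\forall f^{1})$ is precisely where Theorem~\ref{OMG} is needed, since $\textsf{R}^{\omega}$ has an unrestricted universal quantifier on the left.

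It then remains to observe that the constant functional $\lambda f^{1}.z$ of type $1\di 0$ strongly majorizes $\Phi$: for any $v\leq_{1}^{*}u$ we have $\Phi(v)=\varphi(\b(v))\leq_{0}z$ by the previous step, and $z\leq_{0}z$ trivially, which, unwinding the definition of $\leq^{*}$, says exactly $\Phi\leq_{1\di 0}^{*}\lambda f.z$. Since $z$ is standard and $\lambda f.z$ arises by applying a closed term of G\"odel's $\textsf{T}$ to $z$, basic axioms (3) and (4) give $\st^{1\di 0}(\lambda f.z)$; now closure of $\st$ under $\leq^{*}$ (basic axiom (2) of $\T^{*}_{\st}$) yields $\st^{1\di 0}(\Phi)$, as required.

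I do not expect a genuine obstacle: the proof is short. The only real subtlety is the one flagged above — because $\varphi$ is not among the data assumed standard, standardness of $\Phi$ must be obtained indirectly, via the standard bound delivered by $\textsf{R}^{\omega}$ (using that all binary sequences are standard) followed by the majorizability closure, rather than by a direct appeal to closure of $\st$ under application. A purely bookkeeping point is to double-check the precise inductive clause for $\leq_{1\di 0}^{*}$ when verifying $\Phi\leq_{1\di 0}^{*}\lambda f.z$, but this is routine.
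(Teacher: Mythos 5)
Your proof is correct and follows essentially the same route as the paper's: use Theorem~\ref{OMG} to get $(\forall f^{1})(\exists^{\st}n)(\varphi(\b(f))\leq n)$, apply $\textsf{R}^{\omega}$ to produce a standard uniform bound, observe that the constant functional at that bound strongly majorizes $\varphi(\b(\cdot))$, and conclude standardness from basic axiom (2). The only micro-slip is in unwinding $\leq^{*}_{1\di 0}$, where the clause requires $\Phi(u)\leq_{0}z$ rather than $\Phi(v)\leq_{0}z$, but as the bound is uniform this is immaterial.
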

\begin{proof}
For $\varphi^{2}$ as in the theorem, we have $(\forall f^{1})(\exists^{\st}n)(\varphi(\b(f))\leq n)$, as every binary sequence is standard in $\M$.  
Apply $\textsf{R}^{\omega}$ to obtain a standard upper bound on $\varphi(\b(\cdot))$, say $n_{0}$.  Then it is easy to verify that $\varphi(\b(\cdot)) \leq_{2}^{*}n_{0}^{2}$, and since the latter constant function is standard, so is $\varphi(\b(\cdot))$ 
by the second basic axiom of $\M$.    
\end{proof}
Note that the previous result is classically false and much stronger than the Standard Part principle of $\IST$.  Combining with Theorem \ref{ciro}, every type two functional which does not attain infinite values on the standard part of Cantor space, has a standard upper bound.  

\medskip

Finally, the system $\M$ proves the very useful \emph{overflow} principle, which is not non-standard in nature.  
\begin{thm}[$\M$]\label{koverflow}
If $(\forall^{\st}x^{\sigma})\varphi(x)$ for internal $\varphi$, then there is a \emph{nonstandard} $y^{\sigma}$ such that $\varphi(y)$. 
\end{thm}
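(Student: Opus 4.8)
The plan is to argue by contradiction, using the realization axiom $\textsf{R}^{\omega}$ as the key non-standard tool: it plays here the role that Idealisation plays in full $\textsf{IST}$, which is not available in $\M$. So assume $(\forall^{\st}x^{\sigma})\varphi(x)$ for internal $\varphi$, but that there is \emph{no} nonstandard $y^{\sigma}$ with $\varphi(y)$; the latter assumption is just the statement $(\forall y^{\sigma})(\varphi(y)\di\st(y))$, so the internal formula $\varphi$ would define precisely the standard objects of type $\sigma$.

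The first step is to trade `$\st$' for strong majorizability. By $\textsf{MAJ}^{\omega}_{\st}$ we have $\st(y)\di(\exists^{\st}z^{\sigma})(y\leq^{*}z)$, whence $(\forall y^{\sigma})\big[\varphi(y)\di(\exists^{\st}z^{\sigma})(y\leq^{*}z)\big]$. Since there is at least one standard object of type $\sigma$ (a constant-zero functional is a closed term, hence standard by the third basic axiom of $\M$), a case split on $\varphi(y)$ shows that the \emph{internal} formula $\chi(y,z):\equiv[\varphi(y)\di y\leq^{*}z]$ satisfies $(\forall y^{\sigma})(\exists^{\st}z^{\sigma})\chi(y,z)$. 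Applying $\textsf{R}^{\omega}$ now yields a standard monotone $Z^{\sigma}$ with $(\forall y^{\sigma})(\exists z\leq^{*}Z)\chi(y,z)$, and transitivity of $\leq^{*}$ then gives $(\forall y^{\sigma})[\varphi(y)\di y\leq^{*}Z]$: every standard object of type $\sigma$ is strongly majorized by the \emph{single} standard object $Z$.

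The final step is to refute this by diagonalisation. Writing $\sigma\equiv(\tau_{1}\di\cdots\di\tau_{k}\di 0)$, set $y_{0}:=\lambda\vec{w}.\,(Z\vec{w}+1)$; being the value at $Z$ of a closed term of type $\sigma\di\sigma$, the object $y_{0}$ is standard by the third and fourth basic axioms of $\M$. Hence $\varphi(y_{0})$ by the hypothesis $(\forall^{\st}x^{\sigma})\varphi(x)$, so $y_{0}\leq^{*}Z$ by the previous paragraph. On the other hand, unfolding the inductive definition of $\leq^{*}_{\sigma}$ and feeding in the constant-zero functionals of types $\tau_{1},\dots,\tau_{k}$ reduces $y_{0}\leq^{*}_{\sigma}Z$ to the false type-zero inequality $Z(\vec 0)+1\leq_{0}Z(\vec 0)$. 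This contradiction yields $(\exists y^{\sigma})(\varphi(y)\wedge\neg\st(y))$, as required, and for $\sigma=0$ it is precisely the overflow invoked in the proof of Corollary \ref{fukwkl}.

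The only genuine work lies in the bookkeeping: massaging the external hypothesis into the exact `$(\forall x)(\exists^{\st}y)(\text{internal})$' shape demanded by $\textsf{R}^{\omega}$ (the case split is what keeps the matrix `$\st$'-free), and carefully unwinding strong majorizability at an arbitrary finite type to see simultaneously that $y_{0}$ is standard while $y_{0}\not\leq^{*}Z$. At type $0$ one could alternatively bypass $\textsf{R}^{\omega}$ and argue directly from the internal least-number principle, provided one first observes that $\M$ proves the existence of a nonstandard natural; for higher types the realization axiom seems to be the cleanest route.
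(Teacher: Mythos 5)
Your argument is correct, and it takes a genuinely different route from the paper's. The paper's proof is a one-line citation: it notes (following Ferreira--Gaspar) that $\textsf{R}^{\omega}$ implies the Idealisation axiom, and then invokes van den Berg--Briseid--Safarik's derivation of overflow from Idealisation. You instead give a self-contained argument by contradiction inside $\M$: assuming $\varphi$ picks out exactly the standard objects of type $\sigma$, you use $\textsf{MAJ}^{\omega}_{\st}$ to replace `$\st(y)$' by the internal condition of being $\leq^{*}$ some standard object, then $\textsf{R}^{\omega}$ to collapse those bounds to a single standard monotone $Z$, and finally a diagonal term $\lambda\vec{w}.\,(Z\vec{w}+1)$ --- standard because it is built from the closed successor term and $Z$ --- which cannot be $\leq^{*}Z$. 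What each route buys: the paper's is short and modular but imports two results from other papers and, as stated, the cited overflow result is really about type $0$ (it proceeds via a maximum of a finite sequence), whereas your diagonalisation handles arbitrary finite types $\sigma$ directly, which is what the theorem as stated actually claims; on the other hand, your proof uses $\textsf{MAJ}^{\omega}_{\st}$ in addition to $\textsf{R}^{\omega}$, and implicitly relies on the transitivity of $\leq^{*}$ and the monotonicity of the constant-zero functionals --- both standard but not entirely immediate lemmas, which you should at least flag (transitivity of s-maj requires the self-majorization property $x\leq^{*}y\Rightarrow y\leq^{*}y$ to go through). Your case-split step massaging the hypothesis into the $(\forall x)(\exists^{\st}y)(\text{internal})$ shape required by $\textsf{R}^{\omega}$ is exactly the right move and is the only genuinely delicate formal step; it works because classical logic is available and $0^{\sigma}$ is a standard fallback witness.
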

\begin{proof}
As noted in \cite{fega}*{\S2}, the axiom $\textsf{R}^{\omega}$ implies the idealisation axiom.  The latter is used in \cite{brie}*{Prop.\ 3.3} to prove overflow.  
\end{proof}

\section{Proof mining using $\M$}\label{kafir}  
In this section, we establish two important features of the system $\M$.  First of all, we show that $\M$ allows us to extract computational information from proofs in Nonstandard Analysis, similar (but with significant modifications) to the results in \cite{sambon}.  
Secondly, we show how the results from Section \ref{NSANSA} allow us to extract computational information from proofs \emph{not involving Nonstandard Analysis}, as explained in Section \ref{wintro}.       
\subsection{Mining a nonstandard proof}\label{infidel}
In this section, we study the statement $\CRI$: \emph{A uniformly continuous function on the unit interval is Riemann integrable}.  
We shall extract a proof of the effective version \eqref{EST} from a proof of the \emph{nonstandard} version \eqref{NST} of $\CRI$ inside $\M$.  
In this way, we show that the techniques from \cite{sambon} can work for the system $\M$, with some non-trivial modification.  

\medskip

The previous observation is highly relevant for us, as the {extraction} of computational information from proofs and theorems \emph{not involving Nonstandard Analysis} will still proceed by a detour via Nonstandard Analysis,
as discussed in Section \ref{wintro}, and implemented in Section~\ref{examinisalamini}. 

\medskip

The relevant definitions of continuity and integration are as follows.
\bdefi[Continuity]
A function $f:\R\di \R$ is \emph{nonstandard continuous} on $[0,1]$ if
\be\label{soareyou3}
(\forall^{\st}x\in [0,1])(\forall y\in [0,1])[x\approx y \di f(x)\approx f(y)].
\ee
A function $f:\R\di \R$ is \emph{nonstandard uniformly continuous} on $[0,1]$ if
\be\label{soareyou4}
(\forall x, y\in [0,1])[x\approx y \di f(x)\approx f(y)].
\ee
A function $f:\R\di \R$ is \emph{uniformly continuous} on $[0,1]$ if
\be\label{soareyou6}\textstyle
(\forall k^{0})(\exists N^{0})(\forall x^{1}, y^{1}\in [0,1])[ |x -y|<\frac{1}{N} \di |f(x)- f(y)|\leq\frac1k].
\ee
A \emph{modulus} of uniform continuity outputs $N^{0}$ as in \eqref{soareyou6} on input any $k^{0}$
\edefi
\bdefi[Integration]
For a partition $\pi=(0, t_{0}, x_{1},t_{1},  \dots,x_{M-1}, t_{M-1}, 1)$ of the unit interval (which we denote by $\pi \in P([0,1]) $), the number $S_{\pi}(f):=\sum_{i=0}^{M-1}f(t_{i}) (x_{i}-x_{i+1}) $ is the \emph{Riemann sum} and  
and $\|\pi\|$ is the \emph{mesh}, i.e.\ the largest distance between two adjacent partition points $x_{i}$ and $x_{i+1}$. 
A function $f:\R\di \R$ is \emph{nonstandard \(Riemann\) integrable} on $[0,1]$ if
\be\label{soareyou5}
(\forall \pi, \pi' \in P([0,1]))\big[\|\pi\|,\| \pi'\|\approx 0  \di S_{\pi}(f)\approx S_{\pi}(f)  \big],
\ee
A function $f:\R\di \R$ is \emph{\(Riemann\) integrable} on $[0,1]$ if
\be\label{soareyou7}\textstyle
(\forall k^{0})(\exists N)(\forall \pi, \pi' \in P([0,1]))\big[\|\pi\|,\| \pi'\| <\frac{1}{N}  \di |S_{\pi}(f)- S_{\pi}(f)|<\frac1k  \big],
\ee

\edefi
Let $\textsf{CRI}_{\ns}$ be the statement $(\forall f:\R\di \R)[\eqref{soareyou4}\di \eqref{soareyou5}]$, define $\textsf{CRI}_{\ef}(t)$ as \eqref{EST}.  
\begin{thm}\label{varou}
From the proof of $\CRI_{\ns}$ in $\M$ a term $t$ can be extracted such that $\textup{\textsf{E-PA}}^{\omega} $ proves $\CRI_{{\ef}}(t)$.  
\end{thm}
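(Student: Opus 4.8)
# Proof Proposal

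The plan is to follow the template implicit in Section~\ref{infidel}: rewrite $\CRI_{\ns}$ as an implication between normal forms, pull a term out of the proof via Corollary~\ref{consresultcor}, and then recognize the resulting internal statement as $\CRI_{\ef}(t)$. The first task is to put both the antecedent \eqref{soareyou4} and the consequent \eqref{soareyou5} into normal form $(\forall^{\st}x)(\exists^{\st}y)\varphi$. For \eqref{soareyou5}, expanding the definition of $\approx$ on reals, the statement $(\forall \pi,\pi')[\|\pi\|,\|\pi'\|\approx 0 \di S_\pi(f)\approx S_\pi(f)]$ becomes, after moving the standard quantifiers around (using that $\pi,\pi'$ ranging over partitions and the mesh condition can be encoded with standard witnesses and idealisation), something of the shape $(\forall^{\st}k)(\exists^{\st}N)(\forall \pi,\pi')[\|\pi\|,\|\pi'\|<\tfrac1N \di |S_\pi(f)-S_\pi(f)|<\tfrac1k]$. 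Here one uses overflow (Theorem~\ref{koverflow}) in the direction that makes the $\approx 0$ hypothesis equivalent to `less than $\tfrac1N$ for all infinite $N$', plus the fact that the matrix is internal. The antecedent \eqref{soareyou4} is handled the same way, producing $(\forall^{\st}k)(\exists^{\st}N)(\forall x,y\in[0,1])[|x-y|<\tfrac1N \di |f(x)-f(y)|\le\tfrac1k]$, i.e.\ \eqref{soareyou6} relative to `st' together with a modulus.

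Next I would combine these into a single normal form for the whole implication $\CRI_{\ns}$. Because normal forms are closed under modus ponens inside $\M$ (noted right after Theorem~\ref{consresult3}), and because a proof of $A\di B$ with both $A,B$ in normal form can be massaged—using $\textsf{mAC}^\omega_{\st}$ to Skolemize the antecedent's modulus $g$ and $\textsf{R}^\omega$/idealisation as needed—into a normal form $(\forall^{\st}g,n)(\exists^{\st}N)\,\psi(f,g,n,N)$ where $\psi$ is the internal matrix asserting `if $g$ is a modulus of uniform continuity for $f$ and $\|\pi\|,\|\pi'\|<\tfrac1N$ then $|S_\pi(f)-S_\pi(f)|\le\tfrac1n$'. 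This is the crucial rewriting step: $\CRI_{\ns}$ in $\M$ yields $\M\vdash(\forall^{\st}f,g,n)(\exists^{\st}N)\psi(f,g,n,N)$. Note all the standard variables here ($f$ of type $1$, $g,n$ of type $\le 1$) are of low type, so the caveat from the discussion after Corollary~\ref{consresultcor} about the strong majorizability predicate affecting only high types does not bite.

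Now apply Corollary~\ref{consresultcor} to this normal form. The hypothesis \eqref{antecedn3} is exactly $\M\vdash \CRI_{\ns}$, which we are given. The corollary then produces a closed monotone term $t$ such that $\textsf{E-PA}^\omega$ proves $(\tilde\forall f,g,n)(\forall \cdot\le^* \cdot)(\exists N\le^* t(f,g,n))\psi(\dots)$; since $\psi$ is monotone in $N$ in the right way (larger $N$ makes the mesh condition stronger, hence the implication still holds—or one replaces $t$ by its own majorant, which is again a closed term), this collapses to $\textsf{E-PA}^\omega\vdash(\forall f,g,n)\psi(f,g,n,t(f,g,n))$. Unwinding $\psi$ and writing $t(g,n)$ for the relevant component gives precisely $\CRI_{\ef}(t)$ as displayed in \eqref{EST}. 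One should double-check that the $\tilde\forall$ over monotone $f,g,n$ can be dropped: for type-$0$ and type-$1$ objects everything is strongly majorizable (again the remark after Corollary~\ref{consresultcor}), and for $f:\R\di\R$ one composes with the hat function and a majorant, all by closed terms, so full universal quantifiers are recovered.

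The main obstacle I expect is the normal-form bookkeeping in the first two steps: correctly translating `$\|\pi\|\approx 0$' and `$S_\pi(f)\approx S_\pi(f')$' into $\eps$-$N$ form uses overflow and $\textsf{mAC}^\omega_{\st}$ in slightly different directions on the two sides of the implication, and one must be careful that the extracted $N$ genuinely depends only on the modulus $g$ and the precision $n$ and not on $f$ itself—otherwise the term $t$ in \eqref{EST} would not have the advertised shape. This is precisely the kind of `non-trivial modification' of the \cite{sambon} technique that the theorem statement warns about; once the normal form $(\forall^{\st}g,n)(\exists^{\st}N)\psi$ is correctly isolated, the rest is a direct appeal to Corollary~\ref{consresultcor}.
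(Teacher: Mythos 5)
Your overall strategy---bring $\CRI_{\ns}$ into normal form $(\forall^{\st}\cdot)(\exists^{\st}\cdot)\varphi$ and then invoke Corollary~\ref{consresultcor}---is the same as the paper's, and you correctly identify the key point: the extracted bound must end up depending only on the modulus $g$ and the precision parameter, not on $f$. But your proposal does not actually resolve this, and it vacillates on exactly the step where it matters. At one point you write the target normal form as $(\forall^{\st}g,n)(\exists^{\st}N)\psi(f,g,n,N)$ with $f$ free (which is the right shape), but in the step you call `crucial' you switch to $\M\vdash(\forall^{\st}f,g,n)(\exists^{\st}N)\psi(f,g,n,N)$, placing $f$ in the standard block, and then extract a term $t(f,g,n)$. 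Passing from there to $t(g,n)$ by ``writing $t(g,n)$ for the relevant component'' is precisely the unjustified leap: with $f$ in the standard block the term genuinely depends on $f$. Nor can you repair this afterwards: your claim that ``for $f:\R\di\R$ one composes with the hat function and a majorant, all by closed terms'' is false. An arbitrary $f$ of type $1\to1$ (e.g.\ unbounded on $[0,1]$, and there is no continuity requirement before the antecedent fires) is not strongly majorizable, and certainly not by a closed term; so the $\tilde{\forall}f$ you would be left with cannot be converted back into a full universal quantifier, and Corollary~\ref{consresultcor} would deliver a statement that is essentially vacuous in $f$.

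The paper sidesteps both problems by never letting $f$ enter the standard block. One first Skolemizes the antecedent to obtain a standard modulus $g$, then reaches a statement of the shape $(\forall^{\st}g,k')(\forall f)(\exists^{\st}N',k)[A(g,k,f)\di B(k',N',f)]$ in which $(\forall f)$ is \emph{internal}. The crucial move is then an application of $\textsf{R}^{\omega}$ to pull the standard existential quantifier $(\exists^{\st}N')$ in front of the internal $(\forall f)$, producing a single standard $N'$ that works \emph{uniformly} for every $f$ with modulus $g$. This is exactly the uniform-boundedness phenomenon $\M$ is designed to validate, and it is what makes the extracted term come out as $t(g,k')$ with no $f$-dependence. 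Since $f$ stays internal, no majorizability assumption on $f$ is ever needed, and the only monotonization required is the harmless replacement of $g$ by $\tilde g(k):=\max_{n\le k}g(n)$. Without making this $\textsf{R}^{\omega}$ step explicit, your argument does not yield $\CRI_{\ef}(t)$ in the advertised form.
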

\begin{proof}
For completeness, we first show that $\CRI_{\ns}$ can be proved in $\M $.  A similar proof may be found in \cites{sambon, aloneatlast3}.  

\medskip
  
Given two partitions of the unit interval $\pi=(0, t_{0}, x_{1},t_{1},  \dots,x_{M-1}, t_{M-1}, 1)$ and 
$\pi'=(0, t_{0}', x_{1}',t_{1}',  \dots,x_{M'-1}', t_{M'-1}, 1)$ with infinitesimal mesh, let $x_{i}''$ for $i\leq M''$ be an enumeration\footnote{To make this enumeration effective, work with the approximations $[x_{i}](2^{M})$ and $[x_{i}'](2^{M'})$ and note that the difference is infinitesimal in the below steps.} of all $x_{i}$ and $x_{i}'$ in increasing order.  Let $t_{i}''$ and $t_{i}'''$ for $i\leq M''$ be the associated $t_{i}$ and $t_{i}''$ with repetitions (corresponding to $x_{i}''$) of the latter to obtain a list of length $M''$.    
Then we have that
\begin{align*}\textstyle
|S_{\pi}(f)-S_{\pi'}(f)| &\textstyle=|\sum_{i=0}^{M-1}f(t_{i}) (x_{i}-x_{i+1}) -\sum_{i=0}^{M'-1}f(t_{i}') (x_{i}'-x_{i+1}') |\\
&\textstyle=|\sum_{i=0}^{M''-1}f(t_{i}'') (x_{i}''-x_{i+1}'') -\sum_{i=0}^{M''-1}f(t_{i}''') (x_{i}''-x_{i+1}'') |\\
&\textstyle=|\sum_{i=0}^{M''-1}(f(t_{i}'')-f(t_{i}'''))\cdot (x_{i}''-x_{i+1}'') |\\
&\textstyle\leq \sum_{i=0}^{M''-1}|f(t_{i}'')-f(t_{i}''')| \cdot|x_{i}''-x_{i+1}''| \leq   \sum_{i=0}^{M''-1}\eps_{0} \cdot|x_{i}''-x_{i+1}''|\approx 0,
\end{align*}
where $\eps_{0}:=\max_{i\leq M''}|f(t_{i}'')-f(t_{i}''')|$ is an infinitesimal due to the (uniform nonstandard) continuity of $f$ and the definition of $\pi, \pi'$.  

\medskip

Hence, we obtain $\CRI_{\ns}$ inside $\M$, from which we now derive $\CRI_{\ef}(t)$.  To this end, we show that $\CRI_{\ns}$ can be brought into the normal form for applying Corollary~\ref{consresultcor}.  
First of all, we resolve `$\approx$' in the antecedent of $\CRI_{\ns}$ as follows:
\be\label{first}\textstyle
(\forall x, y\in [0,1])[(\forall^{\st}N)|x- y|\leq \frac{1}{N} \di (\forall ^{\st}k)|f(x)- f(y)|\leq \frac{1}{k}].
\ee
Bringing all standard quantifiers in \eqref{first} outside the square brackets, we obtain:
\be\label{first2}\textstyle
(\forall^{\st}k)(\forall x, y\in [0,1])(\exists^{\st}N)[|x- y|\leq \frac{1}{N} \di |f(x)- f(y)|\leq \frac{1}{k}].
\ee
Since the formula in square brackets in \eqref{first2} is internal, we apply $\textsf{R}^{\omega}$ and obtain 
\be\label{first3}\textstyle
(\forall^{\st}k)(\exists^{\st}N')(\forall x, y\in [0,1])(\exists N\leq^{*} N')[|x- y|\leq \frac{1}{N} \di |f(x)- f(y)|\leq \frac{1}{k}].
\ee
By definition, $\leq^{*}_{0}$ is just $\leq_{0}$, implying
\be\label{first4}\textstyle
(\forall^{\st}k)(\exists^{\st}N)\big[(\forall x, y\in [0,1])[|x- y|\leq \frac{1}{N} \di |f(x)- f(y)|\leq \frac{1}{k}]\big].
\ee
Since the formula in (big) square brackets is internal, we may apply $\MAC$ and obtain a standard and monotone $g^{1}$ such that:
\be\label{first667}\textstyle
(\forall^{\st}k)(\exists N\leq^{*} g(k))\big[(\forall x, y\in [0,1])[|x- y|\leq \frac{1}{N} \di |f(x)- f(y)|\leq \frac{1}{k}]\big].
\ee
Again using that $\leq_{0}^{*}$ is $\leq_{0}$, we obtain
\be\label{first668}\textstyle
(\forall^{\st}k)\big[(\forall x, y\in [0,1])[|x- y|\leq \frac{1}{g(k)} \di |f(x)- f(y)|\leq \frac{1}{k}]\big],
\ee
i.e.\ $g$ is a modulus of continuity for $f$.      
Similarly, the consequent of $\CRI_{\ns}$ yields:
\be\textstyle\label{second}
(\forall^{\st}k')(\exists^{\st}N')\big[(\forall \pi, \pi' \in P([0,1]))(\|\pi\|,\| \pi'\|\leq \frac{1}{N'}  \di |S_{\pi}(f)- S_{\pi}(f)|\leq \frac{1}{k'} )\big]
\ee
where $B(k', N', f)$ is the (internal) formula in square brackets;  Let $A(g, k, f)$ be the (internal) formula in big square brackets in \eqref{first668}.   Then $\CRI_{\ns}$ implies 
\be\label{second2}
(\tilde{\forall}^{\st}  g, k')(\forall f)(\exists^{\st} N', k)[A(g, k, f)\di B(k', N', f)], 
\ee
in which the formula in square brackets is internal again.  Note that we may omit the limitation to monotone $g$, since \eqref{first668} also implies \eqref{first} without this monotonicity assumption.  
Thus, we obtain
\be\label{second266}
({\forall}^{\st}  g, k')(\forall f)(\exists^{\st} N', k)[A(g, k, f)\di B(k', N', f)], 
\ee
Applying \textsf{R}$^{\omega}$ to \eqref{second266}, we obtain 
\be\label{second22}
(\forall^{\st}  g, k')(\exists^{\st}l)(\forall f)(\exists N', k\leq^{*} l)[A(g, k, f)\di B(k', N', f)], 
\ee  
which yields, by the special structure of $B$ and the definition of $\leq_{0}^{*}$, that:
\be\label{second23}
(\forall^{\st}  g, k')(\exists^{\st}N')(\forall f)(\exists k)[A(g, k, f)\di B(k', N', f)].
\ee  
Applying Corollary \ref{consresultcor} to \eqref{second23}, the system $\textup{\textsf{E-PA}}^{\omega}$ proves
\[
(\tilde{\forall}   g, {k}')(\forall g_{0}, k_{0}' \leq^{*}g, k')(\exists N'\leq^{*}  t( g_{0}, k_{0}'))(\forall f)(\exists k)[A(g_{0}, k, f)\di B(k'_{0}, N', f)],
\]
for some term $t$ from the original language.  For a sequence $g^{1}$, define the corresponding \emph{monotone} $\tilde{g}$ by $\tilde{g}(k):=\max_{n\leq k}g(n)$ and note that $g\leq^{*}\tilde{g}$.  
Since $\leq^{*}_{0}$ is just $\leq_{0}$, the previous yields:
\be\label{crux}
(\forall  f, g)[(\forall k)A(g, k, f)\di (\forall k')B(k', t( \tilde{g}, k'), f)], 
\ee
which is exactly $\CRI_{\ef}(u)$ for $u(g, k')=t(\tilde{g}, k')$ by the definitions of $A$ and $B$.  
\end{proof}
Note that the actual computation in $\CRI_{\ef}(t)$ only takes place on the modulus. 
Thanks to the previous theorem, we can define (inside \textsf{E-PA}$^{\omega}$) \emph{the} Riemann integral functional $I(f,0, x)$ 
which takes as input a continuous function $f$ on $[0,1]$ and outputs $\int_{0}^{x}f(t) dt$.  In particular, $[I(f,0, x)](k)$ is defined as $\sum_{i=0}^{i(x)} f(\frac{i}{2^{k}}) \frac{1}{2^{k}} $ where $i(x)$ 
is the least partition point larger than $x$.  To guarantee that $I(f,0, x)$ converges `fast enough', one considers $[I(f,0, x)](t(\tilde{g},k))$ where $g$ is a modulus of uniform continuity of $f$ and $t$ is the term from Theorem~\ref{varou}.  
%
%
%

\subsection{Mining a standard proof}\label{examinisalamini}
In this section, we treat the \emph{fundamental theorem of calculus} (denoted $\FTC$; See \cite{rudin}*{\S6}), which states that differentiability and integration cancel each other out.  

\medskip

We shall obtain a proof of the effective version \eqref{kinkel} of $\FTC$ from a proof of the latter in `standard' mathematics, i.e.\ not involving Nonstandard Analysis.  
We shall accomplish this by taking the following `detour' into Nonstandard Analysis:  We prove in $\M$ that the `usual' $\eps$-$\delta$-definition (of continuity, differentiability, et cetera) relative to `st', is equivalent to the associated \emph{nonstandard} definition.  
In this way, we observe that $\FTC^{\st}$ implies a \emph{nonstandard} version of $\FTC$, namely \eqref{dorkkkk} below.  To the latter, we apply the `term extraction' techniques from the previous section to obtain the effective version \eqref{kinkel} of $\FTC$.     

\medskip
    
The previous paragraph highlights an advantage of $\M$ over the systems from \cite{brie}, namely that we can extract information from proofs \emph{not involving Nonstandard Analysis}.  
By contrast, the systems from \cite{brie} were used in \cite{sambon} to extract effective versions \emph{from proofs in Nonstandard Analysis}.
More examples are studied in Section~\ref{BIS}, while a general template is introduced in Section~\ref{sucktwo}.  

\medskip

We first study the first part of $\FTC$ which states that $F'(x)=f(x)$ for continuous $f$ such that $F(x):=\int_{a}^{x}f(x)dx$;   
We shall consider the following `usual' version of $\FTC$, which states that for all $l^{0}$ and $f:\R\di \R$, we have
\begin{align}  
(\forall k')&(\exists N')(\forall \textstyle x, y \in [0,1])(|x-y|<\frac{1}{N'}\di |f(x)-f(y)|\leq\frac{1}{k'}]\di \notag\\
&\textstyle(\forall k)(\exists N)(\forall \eps)(\forall  x\in [\frac{1}{l}, 1-\frac{1}{l}])\big[\eps<\frac{1}{N}  \di |\Delta_{\eps}(I(f,0, x))- f(x)|\leq \frac{1}{k}],\label{furg} 
\end{align}
%
where $\Delta_{\eps}(f(x)):=\frac{f(x+\eps)-f(x)}{\eps}$.  
The effective version $\FTC_{\ef}(t)$ is the statement
\begin{align}  
(\forall f, g, l)\textstyle\Big[&\textstyle(\forall \textstyle x, y \in [0,1], k')(|x-y|<\frac{1}{g(k')}\di |f(x)-f(y)|\leq\frac{1}{k'}]\di\label{kinkel} \\
&\textstyle(\forall k)\big(\forall N\geq t(g , k, l), x\in \big[\frac{1}{l}, 1-\frac{1}{l}\big]\big)\big[|\Delta_{\frac{1}{N}}(I(f,0, x))- f(x)|\leq \frac{1}{k}\big]\Big].\notag
\end{align}
Note that Kohlenbach has shown that continuous real-valued functions as represented in Reverse Mathematics (See \cite{simpson2}*{II.6.6} and \cite{kohlenbach3}*{Prop.\ 4.4}) automatically have a modulus of pointwise continuity.  
The same construction works for uniformly continuous functions and the associated modulus of uniform continuity.  Hence, the antecedent of $\FTC_{\ef}(t)$ is still quite natural.        
\begin{thm}\label{floggen4}
From the proof of \eqref{furg} in $\textup{\textsf{E-PA}}^{\omega} $, a term $t$ can be extracted such that $\textup{\textsf{E-PA}}^{\omega} $ proves $\FTC_{{\ef}}(t)$.  
\end{thm}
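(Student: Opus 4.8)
The plan is to mirror the strategy used for $\CRI$ in Theorem~\ref{varou}, but now starting from a \emph{standard} proof: first pass to the nonstandard world via the equivalence between $\eps$-$\delta$-definitions relative to `st' and the genuinely nonstandard definitions (established by the results of Section~\ref{NSANSA}), then run the term-extraction machinery of Corollary~\ref{consresultcor}. Concretely, I would proceed as follows. Let $\FTC^{\st}$ denote \eqref{furg} with every quantifier relativised to `st' (i.e.\ $\FTC^{\st}$ in the sense of Remark~\ref{notawin}). The first step is to observe that $\M$ proves $\FTC^{\st}\di \eqref{dorkkkk}$, where \eqref{dorkkkk} is the nonstandard formulation
\[
(\forall f:\R\di\R)\big[(\forall x,y\in[0,1])[x\approx y\di f(x)\approx f(y)]\di (\forall^{\st}l)(\forall x\in[\tfrac1l,1-\tfrac1l])(\forall\eps\approx 0)[\Delta_{\eps}(I(f,0,x))\approx f(x)]\big].
\]
To get this, I would use that, relative to `st', the $\eps$-$\delta$-definition of uniform continuity \eqref{soareyou6}$^{\st}$ is equivalent to the nonstandard one \eqref{soareyou4}: the forward direction is immediate (instantiate the standard $N$ and $k$), and the reverse direction uses $\R^{\omega}$/overflow exactly as in the passage from \eqref{first} to \eqref{first668} in the proof of Theorem~\ref{varou}. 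The same equivalence, applied to the consequent of \eqref{furg}, converts the $\eps$-$\delta$-statement `$F'(x)=f(x)$' into `$\Delta_{\eps}(I(f,0,x))\approx f(x)$ for all $\eps\approx 0$'. Since $\textup{\textsf{E-PA}}^{\omega}\vdash\eqref{furg}$ implies $\M\vdash\FTC^{\st}$ (relativising a proof to `st' is sound in $\M$ because $\M$ extends $\textup{\textsf{E-PA}}^{\omega}_{\st}$ and all the extra principles used in \eqref{furg} are available in standard form, together with $\WKL^{\st}$ from Corollary~\ref{fukwkl} if needed), we obtain $\M\vdash\eqref{dorkkkk}$.

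The second step is to bring \eqref{dorkkkk} into the normal form demanded by Corollary~\ref{consresultcor}. Resolving the three occurrences of `$\approx$' in \eqref{dorkkkk} in terms of standard quantifiers, then pulling all standard quantifiers to the front (as in the passage \eqref{first}$\to$\eqref{first2}$\to$\eqref{first4} of Theorem~\ref{varou}), applying $\R^{\omega}$ and $\MAC$ to collapse the bounded existential quantifiers over type-$0$ objects, and using that $\leq_0^{*}$ is $\leq_0$, I would arrive at a statement of the shape
\[
(\forall^{\st}g,k,l)(\exists^{\st}N)(\forall f)\big[A(g,f)\di B(k,l,N,f)\big],
\]
where $A(g,f)$ is the internal formula expressing that $g$ is a modulus of uniform continuity for $f$ and $B(k,l,N,f)$ is the internal formula $(\forall x\in[\tfrac1l,1-\tfrac1l])[|\Delta_{1/N}(I(f,0,x))-f(x)|\leq\tfrac1k]$. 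One subtlety is the innermost quantifier $(\forall\eps)(\eps<\tfrac1N\di\dots)$: I would first note that, for fixed standard $N$, the worst case is $\eps=\tfrac1N$ up to monotonicity of $\Delta$, or simply keep $\eps$ as a universally quantified internal variable inside $B$ — either way $\eps$ never needs to become a standard variable, so it does not obstruct the normal form. Applying Corollary~\ref{consresultcor} to this normal form yields a closed term $t$ such that $\textup{\textsf{E-PA}}^{\omega}$ proves
\[
(\tilde\forall g,k,l)(\forall g_0\leq^{*}g)(\exists N\leq^{*}t(g,k,l))(\forall f)\big[A(g_0,f)\di B(k,l,N,f)\big],
\]
and replacing $g$ by its monotone majorant $\tilde g(k):=\max_{n\leq k}g(n)$ (so $g\leq^{*}\tilde g$ and $A(g,f)$ still implies $A(\tilde g,f)$, cf.\ the end of the proof of Theorem~\ref{varou}) and using that $N\leq_0^{*}t$ just means $N\leq_0 t$ gives exactly $\FTC_{\ef}(u)$ for $u(g,k,l):=t(\tilde g,k,l)$, since $B$ is monotone downward in $N$.

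The main obstacle, and the step deserving the most care, is the first one: verifying that the consequent of \eqref{furg} — a genuine $\eps$-$\delta$-derivative statement on the \emph{standard} interior $[\tfrac1l,1-\tfrac1l]$ — is equivalent, relative to `st', to the nonstandard `$\Delta_\eps\approx f$' statement, and that this equivalence is provable in $\M$ uniformly in $l$. This requires the overflow principle (Theorem~\ref{koverflow}) to convert `$(\forall^{\st}N)(\exists^{\st}\dots)$' patterns into bounded ones, and it requires checking that $I(f,0,x)$ as defined after Theorem~\ref{varou} is well-behaved: namely that $[I(f,0,x)](k)$ genuinely converges fast and that the difference quotient of $I(f,0,\cdot)$ computed from these approximations differs only infinitesimally from $f(x)$ when $x$ lies in the standard interior and $\eps\approx 0$ — this is where continuity of $f$ (hence a standard modulus $g$) does the real work, via the elementary estimate $|\tfrac1\eps\int_x^{x+\eps}(f(t)-f(x))\,dt|\leq\sup_{|t-x|\leq\eps}|f(t)-f(x)|$. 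Everything after that is a routine rerun of the logical manipulations of Theorem~\ref{varou}, so I expect the write-up to be short once the $\eps$-$\delta$/nonstandard dictionary for the derivative is pinned down.
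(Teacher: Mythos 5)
Your high-level plan — relativise the standard proof to `st', convert the $\eps$-$\delta$-definitions relative to `st' into their nonstandard counterparts, normal-form the result, and apply Corollary~\ref{consresultcor} — is indeed the paper's strategy, and the downstream manipulations (the $\textsf{R}^{\omega}$/$\MAC$ passage to normal form, exploitation of $\leq^{*}_{0}=\leq_{0}$, the monotone majorant $\tilde g$) are handled correctly. But the step you call ``immediate'' is precisely the step that carries all the weight, and your sketch of it would fail.

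You assert that the implication from $\eqref{soareyou6}^{\st}$ to the nonstandard version \eqref{soareyou4} follows by ``instantiat[ing] the standard $N$ and $k$''. That gets you $|x-y|<\frac1N$ for arbitrary $x\approx y$, but $\eqref{soareyou6}^{\st}$ only asserts the conclusion $|f(x)-f(y)|\leq\frac1k$ for \emph{standard} $x,y$, and there is no a priori reason an arbitrary $x\in[0,1]$ should be standard. Bridging this is the entire point of Section~\ref{NSANSA}: one rewrites the inner quantifiers via binary representations $\r(\alpha)$, $\r(\beta)$ with $\alpha,\beta\leq_1 1$, invokes Theorem~\ref{OMG} (every binary sequence is standard in $\M$) to drop `st' on $\alpha,\beta$, and then passes back to arbitrary reals using the fact that every real in $[0,1]$ has a binary expansion together with real-extensionality \eqref{REXT}, so that $x=_{\R}\r(\alpha)$ implies $f(x)=_{\R}f(\r(\alpha))$. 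Your attribution of the subtlety to overflow (Theorem~\ref{koverflow}) points to the wrong tool: overflow and $\textsf{R}^{\omega}$ handle the \emph{other} direction of the equivalence, as you note, but not the one the extraction actually depends on. The same gap recurs in your treatment of the derivative consequent, which in the paper is handled by exactly the same $\r(\cdot)$/Theorem~\ref{OMG}/\eqref{REXT} argument applied to \eqref{pothead}. Without this, you cannot strengthen the inner standard quantifiers to quantifiers over all reals, and the normal form you feed to Corollary~\ref{consresultcor} would only produce bounds valid for standard inputs, i.e.\ no genuine uniformity and hence no term $t$ as in $\FTC_{\ef}(t)$.
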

\begin{proof}
It is straightforward to prove \eqref{furg} inside \textsf{E-PA}$^{\omega}$;  It is then easy\footnote{The original proof actually goes through verbatim relative to `st', as standard extensionality follows from $\eps$-$\delta$-continuity relative to `st'.} to verify that $\M$ proves \eqref{furg}$^{\st}$ for any $f:\R\di \R$ and standard $l^{0}$.  We now derive the nonstandard definitions of (uniform) continuity and differentiability from the usual $\eps$-$\delta$-definitions relative to `st' in $\M$.   

\medskip

First of all, working in $\M$, the definition of uniform $\eps$-$\delta$-continuity relative to `\st' implies the following:
\[
(\forall^{\st} k')(\exists^{\st} N')(\forall^{\st} \textstyle \alpha^{1}, \beta^{1} \leq_{1}1)(|\r(\alpha)-\r(\beta)|\ll \frac{1}{N'}\di |f(\r(\alpha))-f(\r(\beta))|\ll\frac{1}{k'}], 
\]
where $\r(\gamma):=\sum_{n=1}^{\infty}\frac{\gamma(n)}{2^{n}}$.    
Replace the predicates `$\ll$' by `$<$' to obtain
\[
(\forall^{\st} k')(\exists^{\st} N')(\forall^{\st} \textstyle \alpha^{1}, \beta^{1} \leq_{1}1)(|\r(\alpha)-\r(\beta)|< \frac{1}{N'}\di |f(\r(\alpha))-f(\r(\beta))|<\frac{1}{k'}]. 
\]
By Theorem \ref{OMG}, every binary sequence is standard in $\M$, hence
\be\label{vollenback}
(\forall^{\st} k')(\exists^{\st} N')(\forall \textstyle \alpha^{1}, \beta^{1} \leq_{1}1)(|\r(\alpha)-\r(\beta)|< \frac{1}{N'}\di |f(\r(\alpha))-f(\r(\beta))|<\frac{1}{k'}]. 
\ee
Recall that the classical fact \emph{every real number has a binary expansion}, can be proved in $\RCA_{0}$, a weak system of second-order arithmetic (\cite{polahirst}). 
Furthermore, the definition of $f:\R\di \R$ as in Definition \ref{keepintireal}, is such that for a real $x$ and its binary expansion $\alpha$, we have $f(x)=_{\R}f(\r(\alpha))$ as $x=_{\R}\r(\alpha)$.  
Hence, \eqref{vollenback} becomes: 
\be\label{normaf}
(\forall^{\st} k')(\exists^{\st} N')\big[(\forall \textstyle x,y\in [0,1])(|x-y|< \frac{1}{N'}\di |f(x)-f(y)|<\frac{1}{k'})\big], 
\ee
which also implies that $f$ is nonstandard uniformly continuous.  Thus, we proved that continuity as in \eqref{soareyou6}$^{\st}$ implies nonstandard continuity for any $f:\R\di \R$.  

\medskip

Secondly, we can similarly prove that for standard $l$, the consequent of \eqref{furg} relative to `st', i.e.\
\be\label{pothead}\textstyle
(\forall^{\st} k)(\exists^{\st} N)(\forall^{\st} \eps, x\in [\frac{1}{l}, 1-\frac{1}{l}])\big[0\ne\eps\ll \frac{1}{N}  \di |\Delta_{\eps}(I(f,0, x))- f(x)|\ll \frac{1}{k}],
\ee
implies the following two nonstandard versions:
\be\label{normag}\textstyle
(\forall^{\st} k)(\exists^{\st} N)\big[(\forall \eps, x\in [\frac{1}{l}, 1-\frac{1}{l}])\big(0\ne\eps< \frac{1}{N}  \di |\Delta_{\eps}(I(f,0, x))- f(x)|< \frac{1}{k}\big)\big],
\ee 
\be\label{knsa}\textstyle
(\forall \eps)(\forall  x\in [\frac{1}{l}, 1-\frac{1}{l}])\big[0\ne\eps\approx 0  \di |\Delta_{\eps}(I(f,0, x))- f(x)|\approx 0],
\ee
where the latter is just the consequent of \eqref{furg} formulated with the nonstandard definition of differentiability.  The formula \eqref{knsa} is easily seen to yield \eqref{normag} in the same way as 
in the proof of Theorem \ref{varou}, namely resolve `$\approx$' and use $\textsf{R}^{\omega}$ to bring all standard quantifiers outside.    

\medskip

Finally, putting the previous together, the proof of \eqref{furg}$^{\st}$ (for any $f:\R\di \R$ and standard $l^{0}$) yields
\be\label{dorkkkk}
(\forall f:\R\di \R)(\forall^{\st}l)\big[(\forall^{\st} k')(\exists^{\st} N')A(k', N', f)\di (\forall^{\st} k)(\exists^{\st} N)B(k, N,f )\big], 
\ee
where $A$ (resp.\ $B$) is the square-bracketed in \eqref{normaf} (resp.\ \eqref{normag}).  
In the same way as in the proof of Theorem \ref{varou}, one now brings \eqref{dorkkkk} into normal form, and applying Corollary \ref{consresultcor} yields $\FTC_{\ef}(t)$, and we are done.  
\end{proof}
Note that it is essential for the proof of the theorem that $f$ is \emph{extensional for real numbers} as in \eqref{REXT}.  Indeed, the latter allows one to 
to pass from \eqref{vollenback} to \eqref{normaf} via binary representations.  
Furthermore, while every binary sequence is standard in $\M$, the latter only implies that every real number \emph{equals} (in the sense of $=_{\R}$) a standard number, but \emph{not} that every real number \emph{is} standard.  
Finally, it would be interesting to compare the complexity of the term from the previous theorem and the term extracted from the `nonstandard' proof in \cite{sambon}*{\S3.3.3}.  

\medskip

In the following remark, we point out that the \emph{second part} of $\FTC$ as formulated in \cite{rudin}*{Theorem~6.21} can be treated similarly.  
\begin{rem}[Second part of $\FTC$]\rm
The nonstandard, non-effective, and effective versions of the second part of $\FTC$ are as follows, using the same notion of differentiability as above.
\begin{align}  
(\forall f)(\forall^{\st} k)\textstyle\Big[(\forall \eps, \eps'\approx 0)&\textstyle(\forall x \in [-\frac{1}{k},1+\frac{1}{k}])[\Delta_{\eps}(f(x))\approx \Delta_{\eps'}(f(x))]\di \notag\\
&\textstyle(\forall \eps> 0)\big[\eps\approx 0 \di I(\Delta_{\eps}(f(\cdot)), 1))\approx f(1)-f(0)\big]\Big].\label{holal2}
\end{align}
\begin{align*}  
(\forall f,k)\textstyle\Big[(\forall l)&\textstyle(\exists N)(\forall \eps, \eps'\leq \frac{1}{N})\textstyle(\forall x \in [-\frac{1}{k},1+\frac{1}{k}])[ |\Delta_{\eps}(f(x))- \Delta_{\eps'}(f(x))|\leq\frac{1}{l}]\di \\
&\textstyle(\forall k)(\exists N)(\forall \eps> 0)\big[\eps\leq \frac{1}{N} \di | I(\Delta_{\eps}(f(\cdot)), 1))- (f(1)-f(0))|\leq\frac{1}{k}\big]\Big].
\end{align*}
\begin{align*}  
(\forall f,g,k)\textstyle\Big[(\forall l)&\textstyle(\forall \eps, \eps'\leq \frac{1}{g(l)})\textstyle(\forall x \in [-\frac{1}{k},1+\frac{1}{k}])[ |\Delta_{\eps}(f(x))- \Delta_{\eps'}(f(x))|\leq\frac{1}{l}]\di \\
&\textstyle(\forall k)(\forall \eps> 0)\big[\eps\leq \frac{1}{t(g,k)} \di | I(\Delta_{\eps}(f(\cdot)), 1))- (f(1)-f(0))|\leq\frac{1}{k}\big]\Big].
\end{align*}
The effective version is obtained from the non-effective version via the nonstandard version by applying the same steps as in the proof of Theorem \ref{floggen4}.  
Note that the notion of differentiability used in \eqref{holal2} gives rise to Bishop's definition \cite{bridge1}*{Definition~5.1, p.~44}.  
In particular, it is easy to verify that we would obtain:
\[
(\exists^{\st}g)(\forall^{\st}l)(\forall \eps, \eps' >0)\textstyle(\forall x \in [-\frac{1}{k},1+\frac{1}{k}])[ |\eps|, |\eps'| \leq \frac{1}{g(l)}\di | \Delta_{\eps}(f(x))- \Delta_{\eps'}(f(x))|\leq\frac{1}l]
\]
in the former case, i.e.\ a modulus of differentiability naturally emerges.  
\end{rem}
The inquisitive reader has no doubt noted that by the previous proof, $\M$ also proves that a \emph{pointwise} (nonstandard or $\eps$-$\delta$ relative to `st') continuous function is \emph{uniformly} nonstandard continuous, and therefore nonstandard Riemann integrable by the results in the previous section.  
In Section \ref{suckone}, we show that this fact does not contradict \cite{simpson2}*{IV.2.7}, which states that weak K\"onig's lemma (not available in $\textsf{E-PA}^{\omega}$) is equivalent to the fact that a pointwise continuous function is Riemann integrable on the unit interval.  

\subsection{General template}\label{sucktwo}
In this section, we provide a general template for extracting computational information from a proof not involving Nonstandard Analysis.  
We will limit ourselves to a theorem $T$ with a proof in $\textsf{E-PA}^{\omega}+\WKL$.  The motivation for the latter limitation is discussed at the end of this section. 
\begin{enumerate}
\item First of all, verify that $\textsf{E-PA}^{\omega}+\WKL\vdash T$ implies that $\M\vdash T$ in light of Corollary \ref{fukwkl}.  
\item Secondly, verify that the $\eps$-$\delta$-definitions (of continuity, differentiability, etc) relative to `st', are equivalent to their nonstandard versions in $\M$;  Let $T^{*}$ be $T^{\st}$ with the former replaced by the latter.  
\item Thirdly, the definitions of common notions (such as continuity, differentiability etc) in Nonstandard Analysis can be brought into the `normal form' $(\forall^{\st}x)(\exists^{\st}y)\varphi(x, y)$, where $\varphi$ is internal, i.e.\ does not involve `st'.    
This can always be done in $\M$.  
\item Fourth, the aforementioned `normal form' is closed under \emph{modes ponens}: Indeed, it is not difficult to show that an implication of the form:
\[
(\forall^{\st}x_{0})(\exists^{\st}y_{0})\varphi_{0}(x_{0}, y_{0})\di (\forall^{\st}x_{1})(\exists^{\st}y_{1})\varphi_{1}(x_{1}, y_{1}),
\]
can \emph{also} be brought into the normal form $(\forall^{\st}x)(\exists^{\st}y)\varphi(x, y)$.  
Hence, $T^{*}$ can be brought into normal form inside $\M$.  
\item Fifth, the normal form of $T^{*}$, say $(\forall^{\st}x)(\exists^{\st}y)\varphi(x, y)$, has exactly the right structure to yield the effective version of $T$, namely $(\forall x)\varphi(x, t(x))$.   
In particular, we can apply Corollary \ref{consresultcor} to the proof of the normal form $(\forall^{\st}x)(\exists^{\st}y)\varphi(x, y)$ inside $\M$.  If $x$ and $y$ are of low enough type, the majorizability predicates `$\leq^{*}$' disappear, and we obtain $(\forall x)\varphi(x, t(x))$.  
\end{enumerate}
In Section \ref{LIM}, we establish some limitations to the scope of the previous sketch.  As will become clear, we should limit ourselves to theorems provable in $\textsf{E-PA}^{\omega}+\WKL$ concerning \emph{uniform} notions (of continuity, differentiability, et cetera).  The latter limitation is also suggested by the final step.  

\medskip

In Section \ref{convieneient}, we apply the above template to theorems concerning convergence.  
In Section \ref{papprox}, we show how to extend our template to obtain approximations to objects stated to exist by (classical and ineffective) higher-order existence statements.      
Finally, in Section \ref{papadimi}, we discuss a more general framework for the second step of the above template, namely the equivalence between the `usual' and nonstandard definitions of mathematical notions, and apply it to the notion of open set and related notions.   


\section{Limitations of $\M$}\label{LIM}
In the previous section, we discovered an advantage of $\M$ over the systems from \cite{brie}, namely that $\M$ may be used to extract computational information from standard proofs, i.e.\ not involving Nonstandard Analysis.  
In this section, we discuss three \emph{disadvantages} of $\M$ compared to the systems from \cite{brie}.  
\subsection{Pointwise versus uniform definitions}\label{suckone}
In this section, we show that \emph{for our purposes} $\M$ is not suitable for studying theorems involving pointwise definitions (of e.g.\ continuity, differentiability, convergence, et cetera).  
In particular, we show that $U_{\st}$ converts the pointwise definition of (nonstandard) continuity into \emph{uniform} continuity.  
Hence, while we can prove theorems concerning pointwise continuity in $\M$, the term extraction theorem (Theorem \ref{consresult3}) converts them into theorems concerning uniform continuity anyway.   
Analogous results for differentiability and convergence are immediate.  

\medskip

Furthermore,  pointwise nonstandard continuity (and convergence, differentiability, et cetera) involves a leading standard \emph{universal} quantifier, and we shall obtain dual results for standard \emph{existential} quantifiers.  
In particular, we show that $U_{\st}$ converts the latter to the existence of rather trivial upper bounds in the case of real numbers and other higher-type objects.      
 
\medskip

First of all, we establish that $\M$ proves $\CRI'_{\ns}$, where the latter is just $\CRI_{\ns}$ from Section \ref{infidel} but with nonstandard \emph{pointwise} continuity (instead of the uniform version \eqref{soareyou4}) as follows:
\be\label{soareyou111}
(\forall^{\st} x\in [0,1])(\forall y\in [0,1])[x\approx y \di f(x)\approx f(y)].
\ee
As in the proof of Theorem \ref{floggen4}, we note that every real equals a standard real (given by a standard binary expansion).  Hence, the leading `\st' in \eqref{soareyou111} may be dropped, and $\M\vdash \CRI_{\ns}$ now yields $\M\vdash \CRI_{\ns}'$.  
Repeating the proof of Theorem~\ref{varou} for the \emph{latter} proof, we obtain that $\textup{\textsf{E-PA}}^{\omega}$ proves
\begin{align}\textstyle
\textstyle~(\forall f: \R\di \R)(\tilde{\forall} g)\Big[(\forall &\textstyle x, y \in [0,1],k)(|x-y|<\frac{1}{g(x,k)} \di |f(x)-f(y)|\leq\frac{1}{k})\label{ESTEBAN}\\
&\textstyle\di  (\forall \pi, \pi' \in P([0,1]),n)\big(\|\pi\|,\| \pi'\|< \frac{1}{t(g,n)}  \di |S_{\pi}(f)- S_{\pi}(f)|\leq \frac{1}{n} \big)  \Big],\notag
\end{align}
As always, the devil is in the detail:  We require the existence of a \emph{monotone} modulus (of pointwise continuity) $g$ in \eqref{ESTEBAN}.  
Now, the definition of $g(\cdot, k)\leq^{*}_{2}g(\cdot, k)$ for fixed $k$ is as follows:
\be\label{drak}
(\forall x^{1}, y^{1})((\forall n)x(n)\leq_{0}y(n) \di g(x, k)\leq_{0} g(y, k) ),
\ee
i.e.\ $g(\cdot, k)$ is weakly increasing for weakly increasing input.  
Now suppose the real numbers $x,y$ in \eqref{ESTEBAN} are represented by binary sequences as follows: 
\be\label{bin}
(\forall \textstyle f^{1}, h^{1},k)(|\r(\b(f))-\r(\b(h))|<\frac{1}{g(f,k)} \di |f(\r(\b(f)))-f(\r(\b(h)))|\leq\frac{1}{k}),
\ee
where $\r$ is as in the previous section and $\b(f)(k):=0$ if $f(k)=0$ and $1$ otherwise.
Then if $g$ is monotone, the associated version of \eqref{drak} yields that $g(11\dots, \cdot)$ is a modulus of \emph{uniform} continuity for $f$.  
In other words, a \emph{monotone} modulus of pointwise continuity is just 
a modulus of uniform continuity in disguise.  Hence, the interpretation 
$U_{\st}$ translates pointwise nonstandard continuity (occurring in the antecedent) into uniform continuity, and the resulting formula \eqref{ESTEBAN} is nothing more than \eqref{EST} in disguise.     

\medskip

Secondly, if pointwise (nonstandard) continuity \eqref{soareyou111} occurs positively in a theorem of $\M$, then it gets translated by $U_{\st}$ to  
\[\textstyle
(\forall k)(\tilde{\forall}x)(\forall  x', y \in [0,1])(x'\leq^{*} x\wedge |x-y|<\frac{1}{t(x,k, \dots)} \di |f(x')-f(y)|\leq\frac{1}{k}),
\]
where $t$ is a term form the internal language, depending on the other (standard) variables in the theorem at hand.  
Using binary representation as in \eqref{bin}, we note that $t$ provides a modulus of uniform continuity in exactly the same way.   
Note that the reason why we obtain \emph{uniform} definitions (of continuity, differentiability et cetera) from $U_{\st}$ and Corollary \ref{consresultcor}, is that the latter introduces 
majorizability predicates in such a way that a standard universal quantifier gives rise to $(\forall x)(\forall x'\leq^{*}x)\dots$ and the bound for the associated existential quantifier \emph{only depends on} $x$, not $x'$.  
%
%
%

\medskip

Thus, we have shown that pointwise definitions (of continuity, convergence, \dots) are translated to uniform definitions by $U_{\st}$.  
In other words, it does not seem to make much sense \emph{for our purposes} to study theorems involving these pointwise notions using $\M$.  
%
%
Now, nonstandard pointwise definitions involve a leading standard \emph{universal} quantifiers, and  the previous results `dually' imply that {standard} \emph{existential} quantifiers only result in an upper bound (in terms of majorization) when interpreted by $U_{\st}$.  
For real numbers and other higher-type objects, this upper bound provides little information, as illustrated by the following example.  
\begin{exa}\rm
Assume $\M$ can prove a version of the intermediate value theorem where $(\exists^{\st} x^{1}\in [0,1])(f(x)=0)$.  Now, applying Corollary~\ref{consresultcor} to the latter
only provides a term $t$ such that $(\exists x\in [0,1])(x\leq^{*}t(\dots) \wedge f(x)=0)$.  Moreover, if we use binary representation as follows: $(\exists^{\st} \alpha^{1})(f(\r(\b(\alpha)))=0)$, the associated upper bound for $\alpha$ can be $11\dots$ (which provides no information), or  $00\dots0011\dots$, (which only provides an approximation with fixed precision).  Note that there is an approximate version of the intermediate value theorem with arbitrary precision  (See \cite{bridge1}*{\S2.4.8}) in Constructive Analysis.  
\end{exa}
The previous example should not come as a surprise:  Kohlenbach has shown that a functional computing the intermediate value of a continuous function, also computes the Turing jump (\cite{kohlenbach2}*{Prop.\ 3.14}).   
Moreover, the interpretation $S_{\st}$ from \cite{brie} has a similar feature:  The statement $(\exists^{\st} x^{1}\in [0,1])(f(x)=0)$ would be translated into $(\exists  x^{1}\in t(\dots))(x\in [0,1]\wedge f(x)=0)$, where $t$ is a term from 
the language which provides a \emph{finite sequence} of witnesses.  Clearly, this finite sequence does not provide much information about the actual fixed point.    
%
%

\medskip

In conclusion, we should limit our scope to theorems involving \emph{uniform} definitions (of continuity, differentiability, convergence, et cetera) and avoid existence statements involving standard higher-type objects.  
Nonetheless, we establish in Section \ref{papprox} that $\M$ may be used to obtain \emph{approximations} to objects claimed to exist by existential statements, \emph{using nothing more than a classical proof of the latter}.    

\subsection{Theorems implying arithmetical comprehension}\label{limpie}
In this section, we study the suitability of $\M$ for the extraction of computational information from theorems implying (the nonstandard version of) arithmetical comprehension.    
In particular, we show that the approach from \cites{sambon,samzoo} does not work, thereby exposing another disadvantage of $\M$.  

\medskip

In more detail, explicit\footnote{An implication $(\exists \Phi)A(\Phi)\di (\exists \Psi)B(\Psi)$ is \emph{explicit} if there is a term $t$ in the language such that additionally $(\forall \Phi)[A(\Phi)\di B(t(\Phi))]$, i.e.\ $\Psi$ can be explicitly defined in terms of $\Phi$.\label{dirkske}} equivalences involving arithmetical comprehension $(\exists^{2})$  are obtained in \cite{sambon}*{\S4.1} and \cite{samzoo} as follows:  For internal $\varphi$, the equivalence 
$\paai\asa (\forall^{\st}x)(\exists^{\st}y )\varphi(x,y)$ is brought into normal form, and  term extraction is applied to obtain terms $s, u$ validating the following explicit implications:
\be\label{frigl}
(\forall \mu^{2})\big[\textsf{\MU}(\mu)\di (\forall x)\varphi(x, s(\mu)) \big] \wedge (\forall t)\big[ (\forall x)\varphi(x, t(x))\di  \MU(u(t))  \big], 
\ee
where $\MU(\mu)$ is the formula in square brackets in the following formula
\be\label{mu}\tag{$\mu^{2}$}
(\exists \mu^{2})\big[(\forall f^{1})( (\exists n)f(n)=0 \di f(\mu(f))=0)    \big],
\ee
which is also called \emph{Feferman's non-constructive search operator} and is equivalent to $(\exists^{2})$ by \cite{kohlenbach2}*{Prop.~3.9}.
By way of example, the results in \cite{samzoo} suggest that all theorems from the Reverse Mathematics zoo (\cite{damirzoo}) can be classified as in \eqref{frigl}.

\medskip

The technique sketched just now involving $\paai$ behaves as follows in $\M$.  
\begin{thm}\label{fliurk}
For internal $\varphi$, applying Corollary \ref{consresultcor} to $\M\vdash \paai \di (\forall^{\st}x^{0})(\exists^{\st}y^{0})\varphi(x,y)$ yields a term $t$ such that $\textsf{\textup{E-PA}}^{\omega}\vdash(\tilde{\forall} {\mu})(\MU(\mu)\di  (\forall x)\varphi(x, t(x,\mu)))$.
\end{thm}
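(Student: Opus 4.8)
The plan is to run the template of Section~\ref{sucktwo} on the given $\M$-proof and then specialise the raw output of Corollary~\ref{consresultcor} with the help of a Feferman operator.

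\emph{Normal form.} First I would bring $\paai$ into normal form. Distributing the inner implication of $\paai$ over the internal quantifier $(\forall m)$ and applying $\textsf{R}^{\omega}$, one gets in $\M$
\[
\paai\;\leftrightarrow\;(\forall^{\st}f^{1})(\exists^{\st}N^{0})\,\theta(f,N),\qquad \theta(f,N):\equiv\big[(\exists n\le N)(f(n)\ne 0)\ \vee\ (\forall m)(f(m)=0)\big],
\]
where $\theta$ is internal and monotone in $N$. The consequent $(\forall^{\st}x^{0})(\exists^{\st}y^{0})\varphi(x,y)$ is already normal, so by the modus-ponens step~(4) of Section~\ref{sucktwo} --- Skolemising the antecedent via $\MAC$ by a standard monotone functional of type $1\di 0$, using that $\theta$ is monotone in $N$ so that the majorant doubles as the value --- the whole implication $\paai\di(\forall^{\st}x)(\exists^{\st}y)\varphi(x,y)$ is equivalent in $\M$ to a normal form
\[
(\tilde{\forall}^{\st}F^{2},x^{0})(\tilde{\exists}^{\st}f^{1},y^{0})\,\Xi(F,x,f,y),\qquad \Xi(F,x,f,y):\equiv\big[\neg\theta(f,F(f))\ \vee\ \varphi(x,y)\big],
\]
with $\Xi$ internal; here $F$ plays the role of a candidate realiser for $\paai$.

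\emph{Term extraction and the role of $\mu$.} Applying Corollary~\ref{consresultcor} to this normal form produces closed monotone terms $t_{f},t_{y}$ of the original language such that $\textup{\textsf{E-PA}}^{\omega}$ proves
\[
(\tilde{\forall}G^{2},\xi^{0})(\forall F\le^{*}G)(\exists f\le^{*}t_{f}(G,\xi),\ y\le_{0}t_{y}(G,\xi))\,\big[\neg\theta(f,F(f))\ \vee\ \varphi(\xi,y)\big];
\]
call this $(\dagger)$. Now fix $\mu^{2}$ with $\MU(\mu)$. Then $\mu$ realises the antecedent of $\paai$: putting $\bar f(n):=0$ if $f(n)\ne 0$ and $1$ otherwise, and $F_{\mu}(f):=\mu(\bar f)$, one checks $(\forall f)\,\theta(f,F_{\mu}(f))$ in $\textup{\textsf{E-PA}}^{\omega}$. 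The intended move is to feed $F:=F_{\mu}$ into $(\dagger)$: the disjunct $\neg\theta(f,F_{\mu}(f))$ is then false for every $f$, so $\varphi(\xi,y)$ survives for some $y\le t_{y}(G,\xi)$, and we let $t(x,\mu)$ be the least such $y$, located by a bounded search below $t_{y}(G,\xi)$ in which $\mu$ is used to decide the (arithmetical) matrix $\varphi$.

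\emph{The obstacle.} The difficulty --- which is the very point of this section --- is that $F_{\mu}$ is a type-two functional that is \emph{not} strongly majorisable: no monotone $G$ satisfies $F_{\mu}\le^{*}G$, since $F_{\mu}$ returns arbitrarily large nonzero positions even on inputs all majorised by one fixed sequence. Hence $F_{\mu}$ cannot literally be substituted into the $\le^{*}$-restricted quantifier $(\forall F\le^{*}G)$ of $(\dagger)$, and Corollary~\ref{consresultcor} --- unlike the interpretation $S_{\st}$ of \cite{brie} used in \cite{sambon} --- does not yield a clean $\mu$-free realiser. What does survive is precisely the $\mu$-indexed statement of the theorem: the extracted term must carry $\mu$ as a parameter, used both to simulate the realiser $F_{\mu}$ wherever the $\le^{*}$-bookkeeping of Corollary~\ref{consresultcor} permits and to carry out the bounded search for $y$. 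I expect the technical heart of the write-up to be the careful check that this residual $\mu$-dependent term can nonetheless be read off from $(\dagger)$, together with the attendant observation that --- because $\M\vdash\neg\paai$, so the equivalence $\paai\leftrightarrow(\forall^{\st}x)(\exists^{\st}y)\varphi$ underlying \cite{samzoo} is unavailable here --- one does \emph{not} obtain the converse conjunct $(\forall t)\big[(\forall x)\varphi(x,t(x))\di\MU(u(t))\big]$ of \eqref{frigl}.
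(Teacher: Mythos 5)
Your derivation of the normal form and your application of Corollary~\ref{consresultcor} to it are exactly the paper's route; the paper's proof is in fact three lines: record the normal form of $\paai$, pass to the normal form of the whole implication, apply the corollary. Where you over-engineer is the finish. There is no separate ``external'' Feferman operator to be substituted into $(\dagger)$, no $F_{\mu}$, and no delicate $\mu$-powered bounded search for $y$: the variable you call $F$ (with monotone majorant $G$) is simply renamed $\mu$ in the theorem's statement, and Corollary~\ref{consresultcor} already hands you a term $t$ carrying $\mu$ as a parameter \emph{because} $\mu$ is one of the universally quantified standard variables of the normal form, not because of any residual analysis. Your (correct) observation that the Feferman operator is not strongly majorisable --- so that $(\tilde{\forall}\mu)(\MU(\mu)\di\dots)$ is vacuously true --- and your remark that $\M\vdash\neg\paai$ kills the converse conjunct of \eqref{frigl}, are both made by the paper, but in the \emph{discussion following} the theorem (citing Kohlenbach's non-majorisability of $(\mu^{2})$), not inside the proof; the proof itself never needs to touch majorisability. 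Incidentally, your normal form places the Herbrandised variable $f$ under an \emph{existential} standard quantifier, which is what the classical shift $\big[(\forall^{\st}f)A(f)\di B\big]\equiv(\exists^{\st}f)[A(f)\di B]$ actually yields; the paper's displayed normal form has $(\forall^{\st}f)$ there, a discrepancy that is immaterial only because the eventual conclusion is vacuous --- your version is the tidier one.
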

\begin{proof}
Note that $\paai$ has the following normal form:
\[
(\forall^{\st}f^{1})(\exists^{\st}n)\big[(\exists m)f(m)=0\di (\exists i\leq n)f(i)=0],
\]
and hence the assumption of the theorem implies that $\M$ proves 
\[
(\forall^{\st} \mu^2, x^{0}, f^{1})(\exists^{\st}y^{0})\big[[(\exists m)f(m)=0\di (\exists i\leq \mu(f))f(i)=0]\di \varphi(x,y)\big],
\]
and applying Corollary \ref{consresultcor} finishes the proof.  
\end{proof}
By the previous theorem, $U_{\st}$ translates $\paai$ into the existence of a \emph{monotone} non-constructive search operator, \emph{which however does not exist}.  
Indeed, the proof of \cite{kohlenbach3}*{Prop.~3.70} implies that $(\mu^{2})$ is not even majorisable.  
In other words, the final formula in Theorem \ref{fliurk} is vacuously true.  

\medskip

In conclusion, we observe that the approach from \cite{sambon}*{\S4.1} does not work in $\M$.  
The same holds \emph{mutatis mutandi} for the reverse implication\footnote{Note that the term provided by Corollary \ref{consresultcor} is also monotone, which would also result in a monotone search operator in the second conjunct of \eqref{frigl}.} $\dots \di \paai$, 
and the results pertaining to $\Pi_{1}^{1}$-comprehension (and the Suslin functional) in \cite{sambon}*{\S4.5}.

\medskip

Finally, any functional computing the intermediate value (or the maximum) of a continuous function on the unit interval, also computes the Turing jump functional $(\exists^{2})$ (See \cite{kohlenbach2}*{\S3}).  
While the results in this section imply that we cannot study the former functional in $\M$, we shall obtain \emph{approximate results} in Section \ref{papprox}.

\subsection{System $\M$ and the fan functional}\label{limpie2}
In Section \ref{suckone}, we showed that \emph{for our purposes} the study of pointwise notions (of e.g.\ continuity) in $\M$ is not very useful.  
In this section, we \emph{willy-nilly} attempt this study, and in the process obtain a `no go theorem' similar to Theorem \ref{fliurk} \emph{at the level of weak K\"onig's lemma}.    

\medskip

Now, the study of pointwise notions is indeed tempting, as discussed now.
\begin{rem}[Dini's theorem and the fan functional]\rm
First of all,  \emph{Dini's theorem} is the statement 
that \emph{a monotone sequence of continuous functions converging pointwise to a continuous function on a compact space, also converges uniformly} (See e.g.\ \cite{rudin}*{Theorem 7.13, p.\ 150}).
Following \cite{diniberg}, Dini's theorem is equivalent to weak K\"onig's lemma in classical Reverse Mathematics.  Note that this lemma states the existence of non-computable objects.  

\medskip

Secondly, Dini's theorem was studied in \cite{sambon}*{\S4.3}, resulting in a term $t$ which constitutes a \emph{modulus} of uniform 
convergence, taking as input the moduli of (pointwise) continuity and (pointwise) convergence \emph{and the fan functional}.            
The latter provides a modulus of uniform continuity for (pointwise) continuous functionals on Cantor space (See \cite{noortje}*{\S4} and \eqref{frellll} below).  
Serendipitously, the fan functional has a computable representation, called a \emph{recursive {associate}} (See \cite{noortje}*{Theorem 4.2.6}), implemented in Haskell (\cite{escardooo}).       

\medskip 
    
Thirdly, all terms in G\"odel's \textsf{T} have canonical interpretations in the typed structure of the Kleene-Kreisel countable functionals (See \cite{noortje}*{\S2} for the latter) and this interpretation provides canonical {associates}. 
Thus, in Kleene's \emph{second model} (See e.g.\ \cite{beeson1}*{\S7.4, p.\ 132}), for a term $t$ of G\"odel's $\textsf{T}$ and $\Phi$ the fan functional, $t(\Phi)$ can be computed by evaluating the associate for $t$ on the associate for $\Phi$.       
\end{rem}    
The previous remark suggests that we may extract information which is \emph{computable modulo the fan functional} from theorems implying weak K\"onig's lemma.  
Although the latter states the existence of non-computable objects, there is a technical way of computing values of the fan functional, i.e.\ the obtained computational information does not `really' depend on a non-computable oracle.   

\medskip

We now show that the approach sketched in the previous remark does not work in $\M$.  To this end, recall that the fan functional $\Omega^{3}$ is defined as: 
\be\label{frellll}
(\forall \varphi^{2}\in C)\big[(\forall f,g \leq_{1}1)\big(\overline{f}\Omega(\varphi)=_{0}\overline{g}\Omega(\varphi)\di \varphi(f)=_{0}\varphi(g) \big)\big], 
\ee
where the formula in square brackets is denoted $\MUC(\Omega)$.  We also define $\MUC_{\ns}$ as
\be\label{knorii}
(\forall^{\st} \psi^{2}\in C)(\forall f, g\leq_{1}1)(f\approx_{1}g\di \psi(f)=\psi(g)).
\ee
Note that $\psi$ is assumed to be pointwise continuous.  

\begin{thm}\label{fliurk2}
For internal $\varphi$, applying Corollary \ref{consresultcor} to 
\be\label{forgoooo}
\M\vdash \big[\MUC_{\ns}\di (\forall^{\st}x^{0})(\exists^{\st}y^{0})\varphi(x,y)\big]
\ee
yields a term $t$ such that $\textsf{\textup{E-PA}}^{\omega}\vdash(\tilde{\forall} {\Omega})(\MUC(\Omega)\di  (\forall x)\varphi(x, t(x,\Omega)))$.
\end{thm}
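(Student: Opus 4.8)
The plan is to mimic the proof of Theorem~\ref{fliurk} exactly, with $\MUC_{\ns}$ playing the role of $\paai$. First I would observe that $\MUC_{\ns}$, i.e.\ \eqref{knorii}, can be brought into normal form inside $\M$. Indeed, resolving $f\approx_{1}g$ via $(\forall^{\st}N)(\overline{f}N=\overline{g}N)$ and $\psi(f)=\psi(g)$ via the internal equality (both are type-zero objects), and using that every $\psi^{2}\in C$ which is pointwise continuous relative to `st' is covered by the machinery of Section~\ref{NSANSA} (Corollary~\ref{XYX}), one pulls the standard quantifiers out to arrive at something of the shape $(\forall^{\st}\psi^{2}\in C)(\forall f,g\leq_{1}1)(\exists^{\st}N)[\overline fN=\overline gN\di \psi(f)=\psi(g)]$, and then applies $\textsf{R}^{\omega}$ and $\MAC$ (as in the passage from \eqref{first} to \eqref{first668}) to obtain a normal form $(\forall^{\st}\psi^{2}\in C)(\exists^{\st}\Omega)\,\MUC(\Omega)^{-}$, where $\MUC(\Omega)^{-}$ is the internal matrix $(\forall f,g\leq_{1}1)(\overline f\Omega(\psi)=\overline g\Omega(\psi)\di \psi(f)=\psi(g))$ — precisely the internal statement from \eqref{frellll}.

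Next I would feed this normal form into the hypothesis \eqref{forgoooo}. Since $\MUC_{\ns}$ is now of the form $(\forall^{\st}\psi)(\exists^{\st}\Omega)\,\MUC(\Omega)^{-}$ and the consequent is the normal form $(\forall^{\st}x)(\exists^{\st}y)\varphi(x,y)$, the implication between two normal forms is itself a normal form (closure of normal forms under modus ponens inside $\M$, as recorded just before Corollary~\ref{consresultcor} and reused in step (4) of the template in Section~\ref{sucktwo}). Concretely, \eqref{forgoooo} yields that $\M$ proves
\be\label{rearrangefan}
(\forall^{\st}\Omega^{3},x^{0},\psi^{2})(\exists^{\st}y^{0})\big[\,\MUC(\Omega)^{-}\di \varphi(x,y)\,\big],
\ee
where I have pulled the universally-quantified $\psi$ (and the witness variable for $\Omega$) to the front; here one uses that the antecedent $(\exists^{\st}\Omega)\MUC(\Omega)^{-}$ becomes a standard universal quantifier over $\Omega$ after the implication is unwound, exactly as in the displayed step of the proof of Theorem~\ref{fliurk}. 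Now \eqref{rearrangefan} is literally a normal form with the internal matrix in square brackets, so Corollary~\ref{consresultcor} applies and delivers a closed term $t$ with $\textup{\textsf{E-PA}}^{\omega}\vdash(\tilde\forall\Omega)(\forall x)\big[\MUC(\Omega)^{-}\di \varphi(x,t(x,\Omega))\big]$, which — after noting, as in Theorem~\ref{varou}, that the type-zero majorizability predicates collapse to ordinary inequalities and that $\MUC(\Omega)^{-}$ is just $\MUC(\Omega)$ — is exactly $(\tilde\forall\Omega)(\MUC(\Omega)\di(\forall x)\varphi(x,t(x,\Omega)))$.

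The main obstacle, as in Theorem~\ref{fliurk}, is not the extraction itself but verifying that the normal form of $\MUC_{\ns}$ genuinely has the internal matrix $\MUC(\Omega)^{-}$ rather than something weaker: one must be careful that the double application of $\textsf{R}^{\omega}$ and $\MAC$ produces a \emph{single} monotone functional $\Omega$ bounding the modulus uniformly in $f,g$, so that the resulting term really is a (candidate) fan functional. The conceptual punchline — which I would state as a remark following the theorem, parallel to the discussion after Theorem~\ref{fliurk} — is that $U_{\st}$ forces $\Omega$ to be \emph{monotone}, hence strongly majorisable, whereas the genuine fan functional $\Omega^{3}$ is \emph{not} majorisable (by the same argument as for $(\mu^{2})$, cf.\ \cite{kohlenbach3}*{Prop.~3.70}, since a majorant for the fan functional would bound moduli of uniform continuity of all pointwise-continuous functionals, which is impossible). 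Therefore $(\tilde\exists\Omega)\MUC(\Omega)$ is false and the conclusion of the theorem is vacuously true — establishing the promised `no go' phenomenon at the level of weak K\"onig's lemma.
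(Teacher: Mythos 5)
Your overall strategy --- bring $\MUC_{\ns}$ into a normal form via $\textsf{R}^\omega$, combine it with the consequent, extract a term with Corollary~\ref{consresultcor}, and note that the result is vacuously true because no \emph{monotone} functional $\Omega$ satisfies $\MUC(\Omega)$ --- is exactly the paper's. Your heuristic for why the fan functional is not majorisable is also correct, though the reference should be \cite{ferrari1}*{p.~42}; Kohlenbach's Prop.~3.70 concerns $(\mu^2)$, not $\Omega^3$.

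The gap is in the intermediate normal form you display: you pulled $\psi^{2}$ out into the standard-universal prefix, so the antecedent inside the matrix refers only to the single fixed $\psi$, and this formula is strictly \emph{stronger} than what the hypothesis $\M\vdash\MUC_{\ns}\di(\forall^{\st}x)(\exists^{\st}y)\varphi(x,y)$ yields. Concretely: take a standard $\Omega$ whose value at the fixed $\psi_{0}$ is a valid modulus of uniform continuity, but which fails for some other $\psi$; then your matrix forces $(\exists^{\st}y)\varphi(x,y)$ outright, yet $\MUC_{\ns}$ fails for that $\Omega$, so the hypothesis gives no help. The paper's proof of Theorem~\ref{fliurk2} instead keeps the $\psi$-quantifier \emph{inside} the matrix, internal and unbounded: the normal form is $(\forall^{\st}\Omega^{3},x^{0})(\exists^{\st}y^{0})\big[(\forall\psi^{2})A(\psi,\Omega(\psi))\di\varphi(x,y)\big]$, where $A(\psi,N)$ is the internal formula $(\forall f,g\leq_{1}1)(\psi\in C\wedge\overline{f}N=\overline{g}N\di\psi(f)=\psi(g))$. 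This form \emph{is} derivable: if $(\forall\psi)A(\psi,\Omega(\psi))$ holds, then in particular $(\forall^{\st}\psi)(\exists^{\st}N)A(\psi,N)$ (take $N=\Omega(\psi)$, standard since $\Omega,\psi$ are), so $\MUC_{\ns}$ holds and the consequent follows; otherwise the implication is trivially true. Keeping $\psi$ internal is also what makes the extracted term come out as $t(x,\Omega)$ rather than a $\psi$-dependent $t(x,\Omega,\psi)$, which your version would produce. (You modelled your rearrangement on the displayed formula in the proof of Theorem~\ref{fliurk}, which does put its analogue of $\psi$ --- namely $f^{1}$ --- in the standard prefix, but the proof of Theorem~\ref{fliurk2} does not, and it is the latter form that is actually needed.)
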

\begin{proof}
Clearly, \eqref{knorii} implies (using classical logic) that
\[
(\forall^{\st} \psi^{2})(\forall f, g\leq_{1}1)(\exists^{\st}N^{0})(\psi\in C\wedge \overline{f}N=\overline{g}N\di \psi(f)=\psi(g)),
\]
and applying $\textsf{R}^{\omega}$ yields
\[
(\forall^{\st} \psi^{2})(\exists^{\st}N^{0})\big[(\forall f, g\leq_{1}1)(\psi\in C\wedge\overline{f}N=\overline{g}N\di \psi(f)=\psi(g))\big],   
\]
where $A(\psi, N)$ is the formula in square brackets.  
Then $\MUC_{\ns}\di (\forall^{\st}x^{0})(\exists^{\st}y^{0})\varphi(x,y)$ implies the following normal form:
\[
(\forall^{\st}\Omega^{3}, x^{0})(\exists^{\st} y^{0})[(\forall \psi^{2})A(\psi, \Omega(\psi))\di \varphi(x, y)],
\]
and the theorem follows.  
\end{proof}
By the previous theorem, $U_{\st}$ translates $\MUC_{\ns}$ into the existence of a \emph{monotone} fan functional, \emph{which however does not exist};  Indeed, as noted in \cite{ferrari1}*{p.\ 42}, the fan functional is not even majorisable, and we observe that \eqref{forgoooo} gives rise to a vacuously true statement.  
This problem remains when one replaces $\MUC_{\ns}$ in \eqref{forgoooo} by other nonstandard principles implying weak K\"onig's lemma, as we show now.  In particular, we consider the Standard part principle for Cantor space 
\be\tag{\textsf{\textup{STP}}}\label{STP}
(\forall \alpha^{1}\leq_{1}1)(\exists^{\st}\beta\leq_{1}1)(\alpha\approx_{1}\beta),
\ee
which is easily seen to imply weak K\"onig's lemma relative to `\st'.  
\begin{cor}\label{fliurk4}
For internal $\varphi$, applying Corollary \ref{consresultcor} to 
\be\label{forgoo3oo}
\M+\QFAC^{1,0}\vdash \big[\ref{STP}\di (\forall^{\st}x^{0})(\exists^{\st}y^{0})\varphi(x,y)\big]
\ee
yields a vacuous truth.  
\end{cor}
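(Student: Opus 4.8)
The plan is to mimic the proof of Theorem~\ref{fliurk2}, but with \ref{STP} in place of $\MUC_{\ns}$, the only genuine new ingredient being that we must massage \ref{STP} into a normal form, and for this we need the extra choice principle $\QFAC^{1,0}$. First I would observe that \ref{STP} has the shape $(\forall \alpha^{1}\leq_{1}1)(\exists^{\st}\beta^{1}\leq_{1}1)(\alpha\approx_{1}\beta)$, where $\alpha\approx_{1}\beta$ unfolds to $(\forall^{\st}n)(\overline{\alpha}n=\overline{\beta}n)$ (using that binary sequences are bounded, so $\approx_1$ is just agreement on all standard initial segments). Pulling the standard universal quantifier on $n$ to the front, \ref{STP} becomes $(\forall \alpha\leq_{1}1)(\forall^{\st}n)(\exists^{\st}\beta\leq_{1}1)(\overline{\alpha}n=\overline{\beta}n)$; this is not yet of the form `internal antecedent $\di$ normal form', because the $\beta$ depends on both $\alpha$ and $n$. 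Applying $\QFAC^{1,0}$ — choice for a number-valued argument producing a type-one object, which is exactly what is needed here — we can Skolemise the inner $\exists^{\st}\beta$ over the internal matrix, obtaining a functional $\Xi$ with $(\forall \alpha\leq_1 1)(\forall^{\st}n)(\overline{\alpha}n = \overline{\Xi(\alpha,n)}n)$, and then by $\textsf{R}^{\omega}$ a standard bound; in effect \ref{STP} yields a normal form $(\forall^{\st}\Xi)(\exists^{\st}\cdots)[\,(\forall \alpha\leq_1 1)(\cdots)\di\cdots\,]$ whose internal antecedent asserts that $\Xi$ is a `standardisation functional' for Cantor space.

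Next I would feed this into the hypothesis \eqref{forgoo3oo}: combining the normal form for \ref{STP} with $(\forall^{\st}x^{0})(\exists^{\st}y^{0})\varphi(x,y)$ exactly as in step (iv) of the template in Section~\ref{sucktwo} (implications between normal forms are again normal forms), we get that $\M+\QFAC^{1,0}$ proves a statement of the form $(\forall^{\st}\Xi^{?}, x^{0})(\exists^{\st}y^{0})[\,A(\Xi)\di\varphi(x,y)\,]$, where $A(\Xi)$ is the internal formula expressing that $\Xi$ standardises binary sequences (i.e. a `standardisation functional' for Cantor space, the constructive analogue of \textsf{STP}). Applying Corollary~\ref{consresultcor} — noting that $\QFAC^{1,0}$ is harmless here since it has a trivial functional interpretation, or alternatively absorbing it into the base theory as Ferreira--Gaspar do — extracts a closed term $t$ with $\textsf{E-PA}^{\omega}\vdash(\tilde{\forall}\Xi)(A(\Xi)\di(\forall x)\varphi(x,t(x,\Xi)))$. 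It then remains to show this is vacuously true, i.e. that no \emph{majorisable} $\Xi$ can satisfy $A(\Xi)$: but a standardisation functional for Cantor space computes, from any $\alpha\leq_1 1$, an initial-segment-agreeing $\beta$, and iterating (or a simple diagonal argument) such a functional computes the Turing jump / is Turing-equivalent to the fan functional $\Omega^3$ of \eqref{frellll}, hence is not majorisable by the cited facts (e.g. \cite{ferrari1}*{p.~42} or \cite{kohlenbach3}*{Prop.~3.70}); therefore $(\tilde{\forall}\Xi)$ quantifies over nothing relevant and the conclusion is vacuous.

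The main obstacle I anticipate is the bookkeeping in the \emph{normal form step}: unlike $\MUC_{\ns}$, whose leading quantifier is already $(\forall^{\st}\psi^2\in C)$ so that Realization applies directly, \ref{STP} leads with an \emph{internal} universal quantifier $(\forall \alpha^1\leq_1 1)$ over a non-standard-bounded domain, and one must be careful that the Skolem functional $\Xi$ produced by $\QFAC^{1,0}$ really can be taken standard (this is where the interaction of $\QFAC^{1,0}$ with $\textsf{R}^{\omega}$ and the basic axioms of $\M$ must be checked) — this is precisely why $\QFAC^{1,0}$ had to be added to the base theory in the corollary's hypothesis, whereas it was not needed in Theorem~\ref{fliurk2}. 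The second, lighter obstacle is pinning down exactly which non-majorisability statement to cite: one wants that a functional witnessing the internal content of \ref{STP} dominates, or computes, something known not to be majorisable (the fan functional, or Feferman's $\mu$), and the cleanest route is to note that such a $\Xi$ computes $(\exists^2)$ on Cantor space, which by the cited results cannot be majorisable. Everything else — resolving $\approx_1$, pulling standard quantifiers out, closure of normal forms under implication, the final application of Corollary~\ref{consresultcor} — is routine and parallels Theorems~\ref{varou}, \ref{fliurk}, and \ref{fliurk2}.
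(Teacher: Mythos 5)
Your plan to normalize \ref{STP} directly and then mimic Theorem~\ref{fliurk2} contains a genuine error at the normal-form step, and this error propagates to the final claim.

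Unfolding \ref{STP} gives $(\forall \alpha\leq_1 1)(\exists^{\st}\beta\leq_1 1)(\forall^{\st}n)(\overline{\alpha}n=\overline{\beta}n)$. You then ``pull the standard universal quantifier on $n$ to the front'' to obtain $(\forall \alpha\leq_1 1)(\forall^{\st}n)(\exists^{\st}\beta\leq_1 1)(\overline{\alpha}n=\overline{\beta}n)$. This exchange of $(\exists^{\st}\beta)$ and $(\forall^{\st}n)$ is a strict weakening, not an equivalence, and the weakened formula is trivially true: one may take $\beta:=\overline{\alpha}n*00\dots$, which is a closed term of $\textsf{E-PA}^{\omega}$ applied to $\alpha$ and $n$. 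Consequently the Skolem functional $\Xi$ you extract by $\QFAC^{1,0}$, namely any $\Xi$ with $(\forall \alpha\leq_1 1)(\forall^{\st}n)(\overline{\alpha}n=\overline{\Xi(\alpha,n)}n)$, is realizable by a \emph{closed, hence majorisable and monotone} term. Your internal predicate $A(\Xi)$ is therefore satisfiable by a monotone witness, and the formula $(\tilde{\forall}\Xi)(A(\Xi)\di(\forall x)\varphi(x,t(x,\Xi)))$ would \emph{not} be vacuous. In other words, if one follows your normalization faithfully, one proves the opposite of the corollary: the extraction yields a (trivial but non-vacuous) statement. The subsequent appeal to ``such a $\Xi$ computes the Turing jump or the fan functional'' fails precisely because your $\Xi$ does not encode the actual content of \ref{STP}.

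The paper takes a different and necessary route: it first rewrites \ref{STP} as a bi-implication with a fan-theorem-shaped statement $(\forall T\leq_1 1)[(\forall^{\st}\alpha\leq_1 1)(\exists^{\st}n)(\overline{\alpha}n\notin T)\di(\exists^{\st}k)(\forall^{\st}\beta\leq_1 1)(\exists i\leq k)(\overline{\beta}i\notin T)]$, and \emph{then} brings this implication of normal forms into a single normal form whose $\exists^{\st}$-part carries a genuine witness $\alpha$ (the Skolem modulus for the antecedent). The resulting antecedent $(\forall g^2)B(g,\Xi(g)(1),\Xi(g)(2))$ expresses a nontrivial property of $\Xi$; a monotone such $\Xi$ would yield an explicit fan theorem $\eqref{WKLN3}$ and hence $\WKL$ (this is where $\QFAC^{1,0}$ is actually used — to pass from the explicit fan theorem to $\WKL$, not to Skolemize \ref{STP}), hence every $\varphi^2\in C$ has an associate, and $\Xi(\cdot)(1)$ becomes essentially the fan functional on associates, which is not majorisable. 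You correctly anticipated the non-majorisability endpoint, but the contraposition step that makes the normal form actually carry the fan-theoretic content of \ref{STP} is the missing idea, and without it the whole argument inverts.
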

\begin{proof}
First of all, \ref{STP} is (easily seen to be) equivalent to the following: 
\begin{align}\label{WKLN}
(\forall T^{1}\leq_{1}1)\big[(\forall^{\st}\alpha^{1}\leq_{1}1)&(\exists^{\st}n^{0})(\overline{\alpha}n\not \in T)\\
&\di (\exists^{\st}k^{0})(\forall^{\st} \beta^{1}\leq_{1}1)(\exists i\leq k)(\overline{\beta}i \not\in T) \big],\notag
\end{align}
in which one immediately recognises the fan theorem, i.e.\ the classical contraposition of weak K\"onig's lemma.  Now, \eqref{WKLN} is equivalent to the normal form:  
\begin{align}\label{WKLN2}
(\forall^{\st}g^{2})(\exists^{\st}k^{0}, \alpha\leq_{1}1)\big[(\forall T^{1}\leq_{1}1)\big[&(\forall \gamma \leq^{*}_{1}\alpha)(\overline{\gamma}g(\gamma)\not \in T)\\
&\di (\forall \beta^{1}\leq_{1}1)(\exists i\leq k)(\overline{\beta}i \not\in T) \big]\big],\notag
\end{align}
where $B(g,k, \alpha)$ is the formula in outermost square brackets.  Hence, \eqref{forgoo3oo} implies 
\[
\M\vdash (\forall^{\st}x^{0}, \Xi)(\exists^{\st}y^{0})\big[(\forall g^{2})B(g, \Xi(g)(1), \Xi(g)(2)) \di \varphi(x,y)\big],
\]
and applying Corollary \ref{consresultcor} yields a term such that $\textsf{E-PA}^{\omega}$ proves 
\be\label{frikkko}
(\tilde{\forall} x^{0}, \Xi)(\forall x',\Xi'\leq^{*}x, \Xi )(\exists y^{0}\leq^{*}t(\Xi, x))\big[(\forall g^{2})B(g, \Xi(g)(1), \Xi(g)(2)) \di \varphi(x,y)\big],
\ee
Now suppose $\Xi$ is a monotone functional satisfying $(\forall g^{2})B(g, \Xi(g)(1), \Xi(g)(2))$.  If $h^{2}$ is such that $(\forall \alpha^{1}\leq_{1}1)(\overline{\alpha}h(\alpha)\not \in T)$, then the binary tree $T$ is not taller than $\Xi(h)(1)$.   
In other words, we obtain the following explicit version of the fan theorem
\begin{align}\label{WKLN3}
(\forall h^{2})\big[(\forall T^{1}\leq_{1}1)\big[&(\forall \gamma\leq_{1}1)(\overline{\gamma}h(\gamma)\not \in T)\\
&\di (\forall \beta^{1}\leq_{1}1)(\exists i\leq \Xi(h)(1))(\overline{\beta}i \not\in T) \big]\big],\notag
\end{align}
noting that $\Xi(\cdot)(1)$ is also monotone.  Clearly, \eqref{WKLN3} implies weak K\"onig's lemma assuming $\QFAC^{1,0}$.  By the results in \cite{kohlenbach4}*{\S4}, weak K\"onig's lemma implies that every $\varphi^{2}\in C$ has a so-called associate $\alpha^{1}$, i.e.\ we have
\begin{enumerate}
\item $(\forall \beta^{1})(\exists N^{0})(\alpha(\overline{\beta}N)>0)$.
\item $(\forall M, \beta)(\alpha(\overline{\beta}M)>0\di \varphi(\beta)+1=\alpha(\overline{\beta}M))$.
\end{enumerate}
The reduction in quantifier complexity afforded by associates (rather than the usual definition of continuity), allows us to define a tree as follows: $\sigma\in T_{\alpha}\asa \alpha(\sigma)=0$ where $\alpha$ is an associate for $\varphi\in C$.    
Hence, we may conclude that $\Xi$ as in \eqref{WKLN3} is `almost' the fan functional in that it works on associates of functionals (rather than the functionals themselves).  However, since every $\varphi\in C$ has an associate (thanks to weak K\"onig's lemma), the functional $\Xi$ cannot be majorisable (let alone monotone), and we obtain that \eqref{frikkko} is a vacuous truth.     
\end{proof}
%
Finally, we show that while Theorem \ref{ciro} already is in normal form, it does not yield a `supremum functional' as one would perhaps hope (or fear, as such a functional is not available in \textsf{E-PA}$^{\omega}$).  
\begin{cor}
Applying Corollary \ref{consresultcor} to Theorem~\ref{ciro}, we obtain a triviality.  
\end{cor}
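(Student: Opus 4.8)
The plan is to take Theorem~\ref{ciro} and trace what happens when we run Corollary~\ref{consresultcor} on it. Recall Theorem~\ref{ciro} states $(\forall^{\st}\varphi^{2})(\exists^{\st}n^{0})(\forall f\leq_{1}1)(\varphi(f)\leq_{0}n)$, which is already a normal form $(\forall^{\st}\varphi^{2})(\exists^{\st}n^{0})\psi(\varphi,n)$ with $\psi$ the internal formula $(\forall f\leq_{1}1)(\varphi(f)\leq_{0}n)$. So Corollary~\ref{consresultcor} applies directly and yields a closed monotone term $t$ such that $\textup{\textsf{E-PA}}^{\omega}$ proves
\be\label{trivbound}
(\tilde{\forall}\varphi^{2})(\forall \varphi'\leq_{2}^{*}\varphi)(\exists n\leq_{0}^{*}t(\varphi))\,(\forall f\leq_{1}1)(\varphi'(f)\leq_{0}n).
\ee
Since $\leq_{0}^{*}$ is just $\leq_{0}$, the conclusion is: for every monotone $\varphi^{2}$ and every $\varphi'$ strongly majorized by $\varphi$, the value $t(\varphi)$ is an upper bound for $\varphi'$ on Cantor space. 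This is the putative `supremum functional'.

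First I would observe that this statement is in fact derivable in $\textup{\textsf{E-PA}}^{\omega}$ by a one-line argument, hence `a triviality': if $\varphi'\leq_{2}^{*}\varphi$ then in particular $\varphi'(f)\leq_{0}\varphi(1_{1})$ for every $f\leq_{1}1$, where $1_{1}:=11\dots$, simply by unwinding the definition of $\leq_{2}^{*}$ applied with the majorant $1_{1}$ of $f$. So one may take $t(\varphi):=\varphi(1_{1})$, and no content beyond the definition of strong majorizability is used. Thus the extracted term is the obvious projection-type term, and Corollary~\ref{consresultcor} tells us nothing we did not already know from the definition of majorizability. Concretely I would note that the term $t$ produced cannot do better than this: there is no way to bound $\varphi$ itself (as opposed to things it majorizes) by a term, since a genuine supremum functional for arbitrary $\varphi^{2}$ on Cantor space would compute $(\exists^{2})$ by the argument already invoked in Section~\ref{limpie} (via \cite{kohlenbach2}*{\S3}), and $(\exists^{2})$ is not available in $\textup{\textsf{E-PA}}^{\omega}$ and not even majorisable by \cite{kohlenbach3}*{Prop.~3.70}.

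So the proof is really the short remark that \eqref{trivbound} reduces to the triviality $t(\varphi):=\varphi(11\dots)$, together with the pointer that this is forced: the majorizability predicates introduced by $U_{\st}$ (exactly as analysed at the end of Section~\ref{suckone}) turn the standard universal quantifier over $\varphi^{2}$ into $(\forall \varphi)(\forall\varphi'\leq^{*}\varphi)\dots$ with the existential bound depending only on $\varphi$, so the `new' information is merely a majorant, which for a type-two functional is already given by evaluation at the maximal binary sequence. The only thing to be slightly careful about is checking that Theorem~\ref{ciro}'s normal form is literally of the shape $(\forall^{\st}x)(\exists^{\st}y)\psi$ required by Corollary~\ref{consresultcor}, with both $x=\varphi^{2}$ and $y=n^{0}$, and then reading off that $t$ has type $2\to 0$ and is monotone; this is routine. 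I expect no real obstacle here: the content of the corollary is precisely that there is no content, i.e.\ that the term extraction degenerates. One could also phrase the conclusion uniformly for an arbitrary standard compact subspace $\{f\leq_{1}h_{0}\}$ of Baire space as in Theorem~\ref{ciro}, in which case $t(\varphi,h_{0}):=\varphi(h_{0})$, by the same one-line unwinding of $\leq_{2}^{*}$.
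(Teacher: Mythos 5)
The paper's own proof here is missing: the source contains only the placeholder ``X'' in place of an argument, so there is nothing of the paper's to compare against. Your argument correctly supplies what that placeholder was evidently meant to contain. Theorem~\ref{ciro} is already the normal form $(\forall^{\st}\varphi^{2})(\exists^{\st}n^{0})(\forall f\leq_{1}1)(\varphi(f)\leq_{0}n)$, so Corollary~\ref{consresultcor} gives a closed monotone term $t$ with
\[
\textup{\textsf{E-PA}}^{\omega}\vdash(\tilde{\forall}\varphi^{2})(\forall \varphi'\leq^{*}_{2}\varphi)(\exists n\leq_{0} t(\varphi))(\forall f\leq_{1}1)(\varphi'(f)\leq_{0}n),
\]
and, as you observe, this is provable in $\textup{\textsf{E-PA}}^{\omega}$ from the definition of $\leq^{*}_{2}$ alone by taking $t(\varphi):=\varphi(11\dots)$. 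That is exactly the ``triviality'' the statement asserts, in the sense flagged by the sentence preceding it: no supremum functional emerges, only the evaluation-at-$11\dots$ projection already implicit in the notion of majorizability. Your supplementary remark that a genuine supremum functional for arbitrary $\varphi^{2}$ on Cantor space would compute $(\exists^{2})$ and is not even majorisable is correct and usefully explains \emph{why} the term is forced to degenerate; note though that the two preceding no-go results (Theorem~\ref{fliurk2} and Corollary~\ref{fliurk4}) end in \emph{vacuous} truths because the monotone antecedent is unsatisfiable, whereas here the extracted statement is a genuinely \emph{provable} triviality, which is presumably why the author chose a different word. One cosmetic caveat: the displayed clause for $\leq^{*}_{\rho\di\sigma}$ in Section~\ref{otherm} transposes $u$ and $v$ relative to the standard Bezem--Howard definition; your unwinding uses the intended reading, which is also the one used tacitly in the proof of Theorem~\ref{ciro} itself, so this does not affect your argument, but a careful writeup might cite \cite{kohlenbach3}*{\S3.5} for the definition.
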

\begin{proof}
X
\end{proof}
In conclusion, we have established that the approach from \cite{sambon}*{\S4} involving the fan functional (and certain variations) does not work in $\M$.  


%
%
%
%
  
\section{Proof mining using $\M$ bis}\label{BIS}
In this section, we provide more examples of how to extract computational information from proofs not involving Nonstandard Analysis.  
In particular, we show in Section \ref{papprox} that from certain \emph{classical and ineffective} existence proofs, one can `automatically' extract approximations to the objects claimed to exist.   
We show that if the object claimed to exist is unique, the approximation converges to it.  
\subsection{Theorems involving convergence}\label{convieneient}
In this section, we show how the template sketched in Section \ref{sucktwo} may be used to extract computational information from theorems involving convergence.

\medskip

First of all, we study the \emph{uniform limit theorem} (See \cite{munkies}*{Theorem 21.6}), which states that if a sequence of continuous functions \emph{uniformly} converges to another function, the latter is also continuous.  
In light of Section \ref{suckone}, we shall study the \emph{uniformly} continuous case;  We will extract the effective version from a proof not involving Nonstandard Analysis.  Nonstandard convergence is defined as follows.      
\bdefi 
A sequence $x_{n}$ \emph{nonstandard converges} to $x$ if $(\forall N\in \Omega)(x\approx x_{N})$.  
A sequence $f_{n}(x)$ \emph{nonstandard uniformly converges} to $f(x)$ on $[0,1]$ if $(\forall   x\in [0,1], N\in \Omega)(f_{N}(x)\approx f(x))$.
\edefi
Hence, we have the following nonstandard version.  
\begin{thm} For all $f_{n}, f:\R\di \R$, we have
\begin{align}
&\big[(\forall^{\st} n^{0})(\forall x^{1}, y^{1}\in [0,1])[x\approx y \di f_{n}(x)\approx f_{n}(y)] ~\wedge\label{from2}\tag{\textsf{ULC}$_{\ns}$} \\
& (\forall x\in [0,1])(\forall N\in \Omega)( f_{N}(x)\approx f(x)) \big]
\di (\forall x\in [0,1])(\forall x,y\in [0,1])(x\approx y\di f(x)\approx f(y)).\notag
\end{align}
\end{thm}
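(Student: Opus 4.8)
The plan is to prove this in $\M$ by a direct nonstandard argument, exactly mirroring the classical ``$\epsilon/3$-argument'' but with infinitesimals doing the bookkeeping. Fix $f_{n}, f:\R\di\R$ satisfying the antecedent, and fix standard $x, y\in[0,1]$ with $x\approx y$; the goal is $f(x)\approx f(y)$. First I would pick an infinite $N\in\Omega$ (which exists, e.g.\ by overflow, Theorem~\ref{koverflow}, applied to a trivial internal predicate, or simply because $\M$ proves idealisation as noted after Theorem~\ref{koverflow}). Then the uniform-convergence hypothesis gives $f_{N}(x)\approx f(x)$ and $f_{N}(y)\approx f(y)$, since both $x$ and $y$ lie in $[0,1]$ and $N\in\Omega$.

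Next I would like to apply nonstandard continuity of $f_{N}$ at the pair $x, y$ to conclude $f_{N}(x)\approx f_{N}(y)$. Here is the one genuine obstacle: the first conjunct of the antecedent only asserts nonstandard uniform continuity of $f_{n}$ for \emph{standard} $n$, whereas $N$ is infinite. So a naive substitution is illegitimate. The fix is to first bring the first conjunct into the normal form $(\forall^{\st}n)(\forall^{\st}k)(\exists^{\st}M)(\forall x,y\in[0,1])[|x-y|\le\frac1M\di|f_{n}(x)-f_{n}(y)|\le\frac1k]$ by resolving ``$\approx$'' and pulling standard quantifiers out, then apply $\textsf{R}^{\omega}$ (realisation) to get a standard functional $g$ with $(\forall^{\st}n,k)(\forall x,y\in[0,1])[|x-y|\le\frac{1}{g(n,k)}\di |f_{n}(x)-f_{n}(y)|\le\frac1k]$, and finally use overflow (Theorem~\ref{koverflow}) to push the leading $(\forall^{\st}n)$ up to some nonstandard $N_{0}\in\Omega$; intersecting with the $N$ chosen from the convergence step (take $N:=\min(N,N_{0})$, still infinite) we get that $f_{N}$ genuinely satisfies the $\epsilon$-$\delta$ continuity estimate for all standard $k$, hence $f_{N}$ is nonstandard continuous, so $x\approx y$ yields $f_{N}(x)\approx f_{N}(y)$.

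With the three approximate equalities $f(x)\approx f_{N}(x)$, $f_{N}(x)\approx f_{N}(y)$, $f_{N}(y)\approx f(y)$ in hand, transitivity of $\approx$ (which is available since $\approx$ on reals is the conjunction of $=_{0}$ statements for all standard indices, up to the usual factor-of-$2$ adjustments that are absorbed harmlessly) gives $f(x)\approx f(y)$. Since $x,y$ were arbitrary standard reals in $[0,1]$ with $x\approx y$, and since the conclusion only quantifies over such objects, we obtain the consequent $(\forall^{\st}x\in[0,1])(\forall x,y\in[0,1])(x\approx y\di f(x)\approx f(y))$ — note the leading quantifier in the stated consequent is vacuous/redundant as written, and the genuine content is nonstandard continuity of $f$, which is what we have shown.

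I expect the main obstacle to be precisely the ``overflow'' step that upgrades standard-$n$ uniform continuity to uniform continuity at an infinite index: one must be careful that the modulus $g$ produced by $\textsf{R}^{\omega}$ is standard and that overflow is applied to the correct internal matrix, so that the same infinite $N$ works simultaneously for the continuity estimate (all standard $k$) and for the convergence estimate. Everything else — resolving $\approx$, pulling out standard quantifiers, transitivity of $\approx$ — is routine and identical in spirit to the manipulations already carried out in the proof of Theorem~\ref{varou}.
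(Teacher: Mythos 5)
Your overall strategy --- use overflow to extend the continuity of $f_{n}$ to a single infinite index, then chain $f(x_{0})\approx f_{N_{0}}(x_{0})\approx f_{N_{0}}(y_{0})\approx f(y_{0})$ --- is exactly the paper's. Two points, though. First, the detour through $\textsf{R}^{\omega}$ to produce a standard modulus $g$ is unnecessary: the paper overflows the \emph{pointwise} instance at the fixed pair $(x_{0},y_{0})$ directly, namely $(\forall^{\st}n,k)(|f_{n}(x_{0})-f_{n}(y_{0})|\leq\frac{1}{k})$, obtaining $N_{0}\in\Omega$ with $(\forall n,k\leq N_{0})(|f_{n}(x_{0})-f_{n}(y_{0})|\leq\frac{1}{k})$, which already gives $f_{N_{0}}(x_{0})\approx f_{N_{0}}(y_{0})$; no modulus is needed, because one never needs the full nonstandard uniform continuity of $f_{N_{0}}$, only its behaviour at the one pair $(x_{0},y_{0})$.

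Second, and this is the genuine gap in your writeup: overflow (Theorem~\ref{koverflow}) requires the matrix to be \emph{internal}, but you propose pushing only the leading $(\forall^{\st}n)$ to an infinite index while leaving $(\forall^{\st}k)$ inside. The formula $(\forall^{\st}k)(\forall x,y\in[0,1])[|x-y|\le\frac{1}{g(n,k)}\di |f_{n}(x)-f_{n}(y)|\le\frac{1}{k}]$ is external, so overflow cannot be applied to it as written. One must overflow over $n$ and $k$ simultaneously, e.g.\ recast $(\forall^{\st}n,k)\psi(n,k)$ as $(\forall^{\st}m)(\forall n,k\leq m)\psi(n,k)$, whose matrix is internal, and overflow over $m$; this yields a single $N_{0}\in\Omega$ such that $\psi(n,k)$ holds for all $n,k\leq N_{0}$, in particular for $n=N_{0}$ and every standard $k$. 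You flag this danger yourself in the final paragraph (``overflow applied to the correct internal matrix''), but the concrete fix must be stated. With that fix, your argument collapses to the paper's after discarding the superfluous $\textsf{R}^{\omega}$ step and the modulus $g$.
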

Let $\ULC$ denote the statement that a sequence of uniformly continuous functions {uniformly} converges to another function, the latter is also uniformly continuous.
Let $C(f, g, S)$ be the statement that $f$ is uniformly continuous on $S$ with modulus $g$ and let $D(f_{n}, f,h, S)$ be the statement that $h$ is a modulus of \emph{uniform} convergence for $f_{n}\di f$ on $S$, i.e.\ $h$ does not depend on the choice of point in $S$.  
Let $\ULC_{\ef}(t)$ denote the statement that for all $ f, f_{n}:\R\di \R$, $g_{n}^{2}$, and $h^{1}$ we have
\be\label{kirf}
\big[(\forall n^{0})C(f_{n},g_{n}, [0,1])\wedge D(f_{n}, f, h, [0,1])   \big] \di C(f, t( g_{n}, h), [0,1]).
\ee
We have the following theorem.
\begin{thm}\label{floggen}
From a proof of $\ULC$ in $\textsf{\textup{E-PA}}^{\omega}$, a term $t$ can be extracted such that $\textup{\textsf{E-PA}}^{\omega} $ proves $\ULC_{{\ef}}(t)$.  
\end{thm}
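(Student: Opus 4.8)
The plan is to follow the general template of Section~\ref{sucktwo} verbatim, exactly as was done for $\CRI$ in Theorem~\ref{varou} and for $\FTC$ in Theorem~\ref{floggen4}. First I would observe that $\ULC$ is provable in $\textsf{E-PA}^{\omega}$ by the usual textbook argument (an $\eps/3$-estimate: split $|f(x)-f(y)|$ as $|f(x)-f_{n}(x)|+|f_{n}(x)-f_{n}(y)|+|f_{n}(y)-f(y)|$, pick $n$ large via uniform convergence, then use uniform continuity of $f_{n}$), so in particular $\M$ proves $\ULC^{\st}$. Next, using the equivalence between the $\eps$-$\delta$-definitions relative to `st' and the nonstandard definitions (established as in the proof of Theorem~\ref{floggen4} via binary representations and the fact from Theorem~\ref{OMG} that every binary sequence is standard), I would argue that $\ULC^{\st}$ implies the nonstandard version \eqref{from2}, i.e.\ $\textsf{ULC}_{\ns}$; here one must check that $\eps$-$\delta$ uniform continuity relative to `st' yields \eqref{soareyou4}-style nonstandard uniform continuity, and that $\eps$-$\delta$ uniform convergence relative to `st' yields the nonstandard uniform convergence $(\forall x\in[0,1])(\forall N\in\Omega)(f_{N}(x)\approx f(x))$ — both being instances of the same `resolve $\approx$, apply $\textsf{R}^{\omega}$' manoeuvre.

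Then I would bring $\textsf{ULC}_{\ns}$ into normal form. The antecedent is a conjunction: the hypothesis $(\forall^{\st}n)[\text{$f_{n}$ nonstd.\ unif.\ cont.}]$ resolves (as in \eqref{first}--\eqref{first668}, using $\textsf{R}^{\omega}$ and $\MAC$) to the existence of a standard monotone $g^{2}$ with $(\forall^{\st}n)C(f_{n},g_{n},[0,1])$ in internal form, where $g_{n}:=g(n,\cdot)$; and the uniform-convergence hypothesis resolves to a standard monotone $h^{1}$ with $D(f_{n},f,h,[0,1])$ internal. The consequent (nonstandard uniform continuity of $f$) resolves to $(\forall^{\st}k)(\exists^{\st}N)[\dots]$ with internal matrix. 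Collecting standard quantifiers to the front, $\textsf{ULC}_{\ns}$ becomes an implication between two normal forms, hence — since normal forms are closed under modus ponens inside $\M$ — itself a normal form $(\forall^{\st}\underline{b})(\exists^{\st}\underline{c})\varphi(\underline{b},\underline{c})$, where $\underline{b}$ carries the data $(g,h,k)$ (or majorants thereof) and $\underline{c}$ carries the modulus value for $f$. Applying Corollary~\ref{consresultcor} extracts a closed monotone term; since $k$ is of type $0$ and $g,h$ are of type $\leq 2$, and since — as in the proof of Theorem~\ref{varou} — one passes from monotone moduli to arbitrary ones via $\tilde{g}(k):=\max_{n\leq k}g(n)$, the strong-majorizability predicates disappear on the output side, yielding $\ULC_{\ef}(t)$ as in \eqref{kirf} with $t(g_{n},h,k)$ built from the extracted term applied to the monotonised inputs.

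The main obstacle I expect is the bookkeeping around the \emph{two} moduli in the antecedent and the need for them to be monotone: the term $t$ in \eqref{kirf} must take $g_{n}$ (a sequence of moduli of uniform continuity, i.e.\ a type-$2$ object) and $h$ together, and Corollary~\ref{consresultcor} will only directly hand back a bound depending on majorants $\tilde{g},\tilde{h}$ rather than on $g_{n},h$ themselves. As the cautionary discussion in Section~\ref{suckone} makes explicit, one has to verify that replacing $g_{n},h$ by their monotone `majorant' versions does not change the truth of the antecedent — which holds precisely because $C(f_{n},g_{n},[0,1])$ and $D(f_{n},f,h,[0,1])$ remain valid when $g_{n},h$ are increased pointwise (a larger modulus is still a modulus). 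This is exactly the reason we restricted to the \emph{uniform} uniform limit theorem rather than the pointwise one: a pointwise modulus would be a type-$2$ object whose monotonisation secretly encodes a uniform modulus, collapsing the statement. Everything else — the $\eps/3$ proof of $\ULC$, the passage to the nonstandard version, the normal-form manipulation, the final term extraction — is routine repetition of the techniques in the proofs of Theorems~\ref{varou} and~\ref{floggen4}.
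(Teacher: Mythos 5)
Your proposal is correct and follows essentially the same route as the paper's proof: prove $\ULC$ classically, pass to $\ULC^{\st}$ in $\M$, derive $\ULC_{\ns}$ from it via the equivalence of standard and nonstandard definitions (cf.\ Theorems~\ref{OMG} and~\ref{floggen4}), bring $\ULC_{\ns}$ into normal form using $\textsf{R}^{\omega}$ and $\MAC$, apply Corollary~\ref{consresultcor}, and monotonise to obtain the final term as in \eqref{crux}. The paper additionally sketches a direct proof of $\ULC_{\ns}$ inside $\M$ via overflow, but only ``for completeness''; also note that for the antecedent conjuncts one only needs the easy direction (nonstandard $\Rightarrow$ standard) of the equivalence you invoke, whereas the hard direction (via binary representations and Theorem~\ref{OMG}) is needed for the consequent --- you mention both checks as going standard-to-nonstandard, a small misstatement that does not affect the argument.
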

\begin{proof}
First of all, it is straightforward to prove $\ULC$ inside $\textup{\textsf{E-PA}}^{\omega}$, yielding a proof of $\ULC^{\st}$ in $\M$.
To obtain $\ULC_{\ns}$ from this standard statement, strengthen the antecedent to \emph{nonstandard} uniform continuity and convergence (which is trivial) and strengthen the consequent to nonstandard uniform continuity (in the same way as for Theorem~\ref{floggen4}).  

\medskip

Secondly, for completeness, we directly prove $\ULC_{\ns}$ in $\M$.  To this end, let $f$ and $f_{n}$ be as in the antecedent of $\ULC_{\ns}$.  
Now fix standard  $x, _{0}y_{0}\in [0,1]$ such that $x_{0}\approx y_{0}$.  By assumption, we have $(\forall^{\st}n)(f_{n}(x_{0})\approx f_{n}(y_{0}))$, implying  $(\forall^{\st}n, k)(|f_{n}(x_{0})- f_{n}(y_{0})|\leq \frac{1}{k})$.  
By overflow (See Section \ref{NSANSA}), there is $N_{0}\in \Omega$ such that $(\forall  n, k\leq N_{0})(|f_{n}(x_{0})- f_{n}(y_{0})|\leq \frac{1}{k})$.  
Again by assumption, we have $f(x_{0})\approx f_{N_{0}}(x_{0})\approx f_{N_{0}}(y_{0})\approx f(y_{0})$, which implies the nonstandard uniform continuity of $f$.       
Alternatively, $\ULC_{\ns}$ can be proved in $\EPA_{\st}^{\omega}$ by proving overflow using induction rather than idealisation.  

\medskip

Thirdly, we show that $\ULC_{\ns}$ can be brought into the right normal form for applying Corollary \ref{consresultcor}.  
Making explicit all standard quantifiers in the first conjunct in the antecedent of $\ULC_{\ns}$, we obtain: 
\[\textstyle
(\forall^{\st} n^{0})(\forall x^{1}, y^{1}\in [0,1])[(\forall^{\st}N)|x-y|\leq \frac{1}{N} \di (\forall^{\st}k)|f_{n}(x)- f_{n}(y)|\leq \frac{1}{k}]
\]
Bringing the standard quantifiers to the front as much possible, this becomes
\[\textstyle
(\forall^{\st}n^{0}, k^{0})(\forall x^{1}, y^{1}\in [0,1])(\exists^{\st}N)[|x-y|\leq \frac{1}{N} \di |f_{n}(x)- f_{n}(y)|\leq \frac{1}{k}].
\]
Since the formula in square brackets is internal, we may apply $\textsf{R}^{\omega}$ to obtain
\[\textstyle
(\forall^{\st} n^{0}, k^{0})(\exists^{\st} m^{0})(\forall x^{1}, y^{1}\in [0,1])(\exists N\leq m)[|x-y|\leq \frac{1}{N} \di |f_{n}(x)- f_{n}(y)|\leq \frac{1}{k}], 
\]
which immediately yields 
\[\textstyle
(\forall^{\st} n^{0}, k^{0})(\exists^{\st} m^{0})(\forall x^{1}, y^{1}\in [0,1])[|x-y|\leq \frac{1}{N} \di |f_{n}(x)- f_{n}(y)|\leq \frac{1}{k}], 
\]
Now apply $\MAC$ to obtain 
\[\textstyle
(\tilde{\exists}^{\st} g_{(\cdot)})(\forall^{\st} n^{0}, k^{0})\big[(\forall x^{1},y^{1}\in [0,1])[|x- y|\leq\frac{1}{g_{n}(k)} \di | f_{n}(x)- f_{n}(y)| \leq \frac{1}{k}]\big].
\]
For brevity let $A(\cdot)$ be the formula in big square brackets.  Similarly, the second conjunct of the antecedent of $\ULC_{\ns}$ yields
\[\textstyle
(\tilde{\exists}^{\st} h)(\forall^{\st}k)\big[(\forall x\in [0,1])(\forall N\geq h(x, k))(| f_{N}(x)- f(x)|\leq \frac{1}{k})\big],    
\]
where $B(\cdot)$ is the formula in big square brackets.  Lastly, the consequent of $\ULC_{\ns}$, which is just the nonstandard uniform continuity of $f$, implies 
\[\textstyle
(\forall^{\st} k)(\exists^{\st}N)\big[(\forall x^{1}, y \in [0,1])(|x-y|<\frac{1}{N} \di |f(x)-f(y)|<\frac{1}{k})\big],
\]
where $E(\cdot)$ is the formula in big square brackets.  The monotonicity of $g, h$ is again not relevant, hence we drop this condition.  We may also drop the `st' predicates on the universal quantifiers \emph{in the antecedent}.      
Hence, $\ULC_{\ns}$ implies that 
\begin{align*}
(\forall f, f_{n})\Big[\big[(\exists^{\st} g_{(\cdot)})( \forall n^{0}, k^{0})A(f_{n}, g_{n}, n, k)& \wedge ({\exists}^{\st} h)(\forall k')B(f, f_{n}, h, k')  \big]\\
&\di (\forall^{\st} k'' )(\exists^{\st}N)E(f, N , k'')\Big].
\end{align*}
Bringing all standard quantifiers to the front, we obtain
\begin{align}
(\forall^{\st} g_{(\cdot)}, h&,k'')\underline{(\forall f, f_{n})(\exists^{\st}  N})\label{fruk}\\
&\Big[\big[(\forall n, k)A(f_{n}, g_{n}, x, n, k) \wedge  (\forall k')B(f, f_{n}, h, k')  \big]\di E(f, N , k'', x')\Big],\notag
\end{align}
where the formula in big(gest) square brackets is internal.  Applying $\textsf{R}^{\omega}$ to the underlined quantifier alternation in \eqref{fruk}, we obtain 
\begin{align}
(\forall^{\st} g_{(\cdot)}, h&,k'')(\exists^{\st}N'){(\forall f, f_{n})(\exists N\leq^{*}_{0}N')}\label{fruk2}\\
&\Big[\big[(\forall n, k)A(f_{n}, g_{n}, x, n, k) \wedge  (\forall k')B(f, f_{n}, h, k')  \big]\di E(f, N , k'', x')\Big],\notag
\end{align}
As \eqref{fruk2} was proved in $\M$, Corollary \ref{consresultcor} tells us that $\textup{\textsf{E-PA}}^{\omega}$ proves 
\begin{align}
(\tilde{\forall} g_{(\cdot)}, h&,k''){(\forall f, f_{n})(\exists N\leq^{*}t(g_{(\cdot)}, h, k''))}\label{frakker}\\
&\Big[\big[(\forall n, k)A(f_{n}, g_{n}, x, n, k) \wedge  (\forall k')B(f, f_{n}, h, k')  \big]\di E(f, N , k'', x')\Big],\notag
\end{align}
where $t$ is a term in the language of $\textup{\textsf{E-PA}}^{\omega}$.  
Define $s(g_{(\cdot)}, h, k''):=t(\widetilde{g_{(\cdot)}}, \tilde{h}, k'')$ as in \eqref{crux} and note that \eqref{frakker} yields $\ULC_{\ef}(s)$.  
\end{proof}
There is a constructive version of the uniform limit theorem in Bishop's  Constructive Analysis (See \cite{bridge1}*{Prop.\ 1.12, p.\ 86}).  

\medskip

The uniform limit theorem serves as an example of the class of theorems (involving convergence in the antecedent) which can be treated using the template described in Section \ref{sucktwo}.  
We now consider  the following more challenging theorem, which is \cite{rudin}*{Theorem 7.17, p.\ 152} and moreover contains convergence \emph{in the consequent}.
\begin{thm}\label{fryg}
Suppose ${f_n}$ is a sequence of functions, differentiable on  $[a, b]$, and such that ${f_n(x_0)}$ converges for some point $x_0$ on $[a, b]$. If  $f'_n$ converges uniformly on $[a, b]$, then ${f_n}$ converges uniformly to a function $f$, and  $f'(x) = \lim_{n\to \infty} f'_n(x)$ for $ x \in [a, b]$.
\end{thm}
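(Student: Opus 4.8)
The plan is to run the general template of Section~\ref{sucktwo}, treating Theorem~\ref{fryg} much as the uniform limit theorem was treated in Theorem~\ref{floggen}; the genuinely new features are that convergence now occurs \emph{both} in the antecedent and in the consequent, that the consequent involves the \emph{derivative} of the limit function, and that the objects $f$ (the uniform limit of the $f_{n}$) and $\phi:=\lim_{n}f_{n}'$ are \emph{produced} rather than given. In accordance with Section~\ref{suckone} we read `differentiable' as \emph{uniformly} differentiable and, for the effective version, equip each $f_{n}$ with a modulus of uniform differentiability; as in the remark following Theorem~\ref{floggen4} the natural function representation already supplies such a modulus, so this costs nothing. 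We abbreviate by `$F$ is nonstandard uniformly differentiable on $[0,1]$ with derivative $G$' the formula $(\forall\eps)(\forall x\in[0,1])[0\neq\eps\approx0\di\Delta_{\eps}(F(x))\approx G(x)]$, in the manner of \eqref{knsa} and \eqref{holal2}. The nonstandard version $T^{*}$ of Theorem~\ref{fryg} then reads: for all $f_{n},f_{n}',f,\phi\colon\R\di\R$ and \emph{standard} $x_{0}\in[0,1]$, if $(\forall^{\st}n)$ [$f_{n}$ is nonstandard uniformly differentiable on $[0,1]$ with derivative $f_{n}'$], $(\forall N\in\Omega)(f_{N}(x_{0})\approx f(x_{0}))$, and $(\forall x\in[0,1])(\forall N\in\Omega)(f_{N}'(x)\approx\phi(x))$, then $(\forall x\in[0,1])(\forall N\in\Omega)(f_{N}(x)\approx f(x))$ and $f$ is nonstandard uniformly differentiable on $[0,1]$ with derivative $\phi$.

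For the first two steps of the template one may proceed in either of the two ways used in the proof of Theorem~\ref{floggen}. In the `detour' route, Rudin's argument proves Theorem~\ref{fryg} in $\textsf{E-PA}^{\omega}+\WKL$ (the only non-elementary ingredient being the mean value theorem applied to the differences $f_{n}-f_{m}$), so by Corollary~\ref{fukwkl} we obtain $T^{\st}$ in $\M$; using that every binary sequence is standard (Theorem~\ref{OMG}) together with the binary-expansion device from the proof of Theorem~\ref{floggen4}, the $\eps$-$\delta$ hypotheses and conclusion relative to `st' yield their nonstandard counterparts, whence $\M\vdash T^{*}$. In the direct route one fixes standard $x\in[0,1]$ and infinite $N,M$ and applies the mean value theorem to $f_{N}-f_{M}$ on the interval with endpoints $x$ and $x_{0}$, obtaining $\xi$ with $f_{N}(x)-f_{M}(x)-(f_{N}(x_{0})-f_{M}(x_{0}))=(f_{N}'(\xi)-f_{M}'(\xi))(x-x_{0})$; here $f_{N}'(\xi)-f_{M}'(\xi)\approx0$ since both are $\approx\phi(\xi)$, and $f_{N}(x_{0})-f_{M}(x_{0})\approx0$ by hypothesis, so $(f_{N}(x))_{N}$ is `nonstandard Cauchy' and has a standard part, which we take as $f(x)$ (cf.\ Theorem~\ref{OMG}); an analogous estimate for the difference quotients, together with overflow (Theorem~\ref{koverflow}) to pass from a standard universal quantifier to a single infinite index exactly as in the proof of Theorem~\ref{floggen}, shows that $f$ is nonstandard uniformly differentiable with derivative $\phi$.

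It remains to carry out steps 3--5, i.e.\ to bring $T^{*}$ into normal form and apply Corollary~\ref{consresultcor}. One applies the by-now familiar manoeuvre of the proofs of Theorems~\ref{varou} and \ref{floggen} to each conjunct in turn: resolve `$\approx$' into standard quantification over rational tolerances, push the standard quantifiers outward, apply $\textsf{R}^{\omega}$ to uniformize, and apply $\MAC$ to manufacture the required moduli---a modulus of uniform differentiability for each $f_{n}$, a rate $h^{0\di0}$ for $(f_{n}(x_{0}))_{n}$, and a modulus of uniform convergence for $f_{n}'\di\phi$ in the antecedent---while the consequent supplies, through standard existential witnesses, a modulus of uniform convergence for $f_{n}\di f$ and a modulus of uniform differentiability for $f$. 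Since the normal form is closed under modus ponens (Section~\ref{sucktwo}, step~4), $T^{*}$ collapses to a normal form $(\forall^{\st}w)(\exists^{\st}N)\varphi(w,N)$ with $w$ collecting the moduli, the data $f_{n},f_{n}',f,\phi$, and a tolerance $k$; all of $w$ and $N$ are of sufficiently low type that the majorizability predicates introduced by $U_{\st}$ are harmless (the discussion following Corollary~\ref{consresultcor}). Applying Corollary~\ref{consresultcor} extracts a closed term $t$, and---exactly as the Riemann-integral functional $I(f,0,x)$ is defined from the term of Theorem~\ref{varou}---taking $f:=\lim_{n}f_{n}$ and $\phi:=\lim_{n}f_{n}'$ defined \emph{via} the extracted modulus makes the output objects explicit, yielding an effective version $T_{\ef}(t)$ in the style of \eqref{kinkel}.

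The main obstacle is not any individual step, each being routine by the methods of Theorems~\ref{varou}, \ref{floggen4} and \ref{floggen}, but the \emph{bookkeeping}: since convergence sits on both sides of the implication and the limit objects $f,\phi$ are produced rather than hypothesized, one must track carefully---through several rounds of $\textsf{R}^{\omega}$ and $\MAC$---which quantifiers remain standard, so that the final formula is genuinely of the form $(\forall^{\st})(\exists^{\st})$ with an internal matrix; and one must check that the enforced \emph{uniform} reading of differentiability (Section~\ref{suckone}) leaves both Rudin's proof and the resulting effective statement meaningful, which it does precisely because the natural function representation already carries a modulus.
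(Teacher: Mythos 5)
The statement labeled Theorem~\ref{fryg} is not proved in the paper at all: it is a verbatim citation of Rudin's Theorem~7.17 (\cite{rudin}*{p.~152}), stated as a classical mathematical fact and used only as raw material. What the paper actually contributes here is the pair consisting of Theorem~\ref{un2ward} (equivalence of standard and nonstandard convergence for a \emph{real}-indexed limit $x_{\eps}$ as $\eps\to0$) and the corollary that follows it, whose proof is the one-line ``follow the template sketched in Section~\ref{sucktwo}'' applied to $\ULD$. Your proposal really addresses that corollary rather than Theorem~\ref{fryg} itself; as such it is in the right spirit and, at the level of the individual mechanical steps (resolve $\approx$, pull out standard quantifiers, apply $\textsf{R}^{\omega}$ and $\MAC$, invoke Corollary~\ref{consresultcor}), it matches what the paper intends.

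There is, however, a concrete gap. Throughout you keep the hypotheses and conclusion of Rudin's theorem in terms of $\lim_{n\to\infty}$ and then claim, in your ``detour'' route, that ``the $\eps$-$\delta$ hypotheses and conclusion relative to `st' yield their nonstandard counterparts'' via ``the binary-expansion device from the proof of Theorem~\ref{floggen4}'' together with Theorem~\ref{OMG}. That device works precisely because a real argument $x\in[0,1]$ has a \emph{standard} binary expansion $\alpha$ with $x=_{\R}\r(\alpha)$, after which $\R$-extensionality \eqref{REXT} transfers the standard $\eps$-$\delta$ estimate to all reals. Nothing analogous exists for the \emph{index} $n$ of a sequence: a nonstandard natural number is simply not $=_{0}$ to any standard one, and the passage from ``$(\forall^{\st}k)(\exists^{\st}N)(\forall^{\st}n\ge N)(|x_{n}-x|\le1/k)$'' to ``$(\forall N\in\Omega)(x_{N}\approx x)$'' does not follow. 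The paper is explicit about this: the final sentence of Section~\ref{convieneient} states that the equivalence \eqref{unlikely} is ``not known'' for $\lim_{n\to\infty}$, and the switch to $\lim_{\eps\to0}$ in the definition of $\ULD$ is made precisely to bring Theorem~\ref{un2ward} to bear. Your detour route therefore cannot go through as written for the $n\to\infty$ formulation; it would have to be recast, as the paper does, with a real-indexed limit, or else one must give up the claim of starting from a proof \emph{not} involving Nonstandard Analysis and fall back on your ``direct'' nonstandard argument, which then lives in the regime of Section~\ref{infidel} rather than Section~\ref{examinisalamini}.
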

%
We now show that the usual $\eps$-$\delta$-definition of convergence (relative to `st') is equivalent to the nonstandard one in $\M$.  Note that we use the `real' version of convergence related to `$\lim_{\eps\di 0}$' instead of `$\lim_{n\di \infty}$'.  
\begin{thm}\label{un2ward}
The system $\M$ proves that standard and nonstandard convergence are equivalent, i.e.\ for every $\R\di \R$-function $x_{(\cdot)}^{1\di 1}$ and real $x$, we have
\begin{align}\label{unlikely}\textstyle
&(\forall \eps^{1}\ne0)(\eps\approx 0\di x_{\eps}\approx x) \\
&\asa\textstyle (\forall^{\st}k^{0})(\exists^{\st}N^{0})(\forall^{\st} \eps^{1}\ne0)(|\eps|\leq \frac{1}{N}\di    |x_{\eps}-x|\ll \frac{1}{k}).\notag
\end{align}
\end{thm}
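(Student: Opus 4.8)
The plan is to prove the biconditional \eqref{unlikely} by establishing its two implications with the two techniques already present in the proof of Theorem~\ref{floggen4}: for the implication from the nonstandard definition to the normal form one resolves `$\approx$' and applies $\textsf{R}^{\omega}$ (exactly as in the treatment of \eqref{knsa}), while for the converse one invokes Theorem~\ref{OMG} via binary expansions together with the $=_{\R}$-extensionality built into the phrase ``$\R\di\R$-function''. Fix once and for all an $\R\di\R$-function $x_{(\cdot)}^{1\di 1}$ and a real $x$; both claims are then proved in $\M$.

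For the downward implication I would first rewrite the hypothesis $(\forall\eps\ne 0)(\eps\approx 0\di x_{\eps}\approx x)$ by replacing `$\eps\approx 0$' with $(\forall^{\st}N)(|\eps|\leq\frac1N)$ and `$x_{\eps}\approx x$' with $(\forall^{\st}k)(|x_{\eps}-x|\leq\frac1k)$, the matrices being $\Pi^{0}_{1}$ and hence internal. Pulling the standard quantifiers to the front as in the proof of Theorem~\ref{varou} turns this into $(\forall^{\st}k)(\forall\eps\ne 0)(\exists^{\st}N)[|\eps|\leq\frac1N\di |x_{\eps}-x|\leq\frac1k]$, to which $\textsf{R}^{\omega}$ applies; using that $\leq^{*}_{0}$ is $\leq_{0}$ together with the obvious structural property of the matrix (exactly as in the passage from \eqref{first3} to \eqref{first4}) collapses the result to $(\forall^{\st}k)(\exists^{\st}N)[(\forall\eps\ne 0)(|\eps|\leq\frac1N\di |x_{\eps}-x|\leq\frac1k)]$. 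Restricting the internal quantifier over $\eps$ to a standard one is a weakening, and upgrading `$\leq\frac1k$' to `$\ll\frac1k$' costs nothing: for standard $k$ the instance for $2k$ already gives $|x_{\eps}-x|\leq\frac{1}{2k}$ and hence $|x_{\eps}-x|\ll\frac1k$. This is exactly the right-hand side of \eqref{unlikely}, and the argument uses only $\textsf{R}^{\omega}$ and classical logic, not Theorem~\ref{OMG}.

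For the upward implication, assume the right-hand side of \eqref{unlikely} and let $\eps\ne 0$ satisfy $\eps\approx 0$; then $|\eps|\leq 1$, so by the ($\RCA_{0}$-provable) fact that every real in $[0,1]$ has a binary expansion we have $|\eps|=_{\R}\r(\alpha)$ for some binary $\alpha^{1}$. By Theorem~\ref{OMG} the sequence $\alpha$ is standard, and since $\r$ is a closed term and the sign of $\eps$ is decided by classical logic, $\eps$ equals a standard real $\eps_{0}$ with $\eps_{0}\ne 0$ and $\eps_{0}\approx 0$. As $x_{(\cdot)}$ is extensional for $=_{\R}$ we have $x_{\eps}=_{\R}x_{\eps_{0}}$, so it suffices to prove $x_{\eps_{0}}\approx x$, i.e.\ $(\forall^{\st}k)(|x_{\eps_{0}}-x|\ll\frac1k)$. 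Fix a standard $k$; the right-hand side of \eqref{unlikely} supplies a standard $N$ with $(\forall^{\st}\eps'\ne 0)(|\eps'|\leq\frac1N\di |x_{\eps'}-x|\ll\frac1k)$, and instantiating this with the standard, nonzero $\eps_{0}$ — which satisfies $|\eps_{0}|\leq\frac1N$ because $\eps_{0}\approx 0$ — yields $|x_{\eps_{0}}-x|\ll\frac1k$. Since $k$ was an arbitrary standard number, $x_{\eps_{0}}\approx x$, whence $x_{\eps}\approx x$.

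I expect the only real obstacle to be conceptual rather than technical: the upward implication genuinely fails over the bare base theory, and it is precisely Theorem~\ref{OMG} — every binary sequence, hence via its binary expansion every real number, equals a standard object — combined with the $=_{\R}$-extensionality of $x_{(\cdot)}$ that repairs it inside $\M$; crucially this does not collapse infinitesimals, since $\M$ lacks Transfer and therefore admits standard reals infinitely close to $0$ (such as $\r(\alpha)$ for a binary $\alpha$ that is $0$ on all standard indices). The remaining work — the bookkeeping in resolving `$\approx$' for real numbers, with its interplay of `$\leq$', `$<$' and `$\ll$' and of the dyadic slack terms — is routine and proceeds verbatim as in the proofs of Theorems~\ref{varou} and~\ref{floggen4}, so I would not carry it out in detail.
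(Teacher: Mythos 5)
Your proof is correct, and the decomposition into the two implications matches the paper's. The reverse (upward) implication is essentially the paper's own argument: pass to a binary expansion, invoke Theorem~\ref{OMG} to conclude it is standard, and use the $=_{\R}$-extensionality of $x_{(\cdot)}$ built into `$\R\di\R$-function' to reduce the claim to a standard instance of the right-hand side. For the forward (downward) implication you reach for a different tool than the paper does: you resolve `$\approx$', pull the standard quantifiers out, and apply $\textsf{R}^{\omega}$ exactly as in the template of Theorem~\ref{varou}, whereas the paper observes that every infinite $N$ works in $(\forall M\geq N)(\forall\eps\ne 0)(|\eps|\leq\frac{1}{M}\di |x_{\eps}-x|\leq\frac{1}{k})$ and then uses minimisation (or, per its footnote, underspill) to argue that the \emph{least} such $N$ cannot be infinite. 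Both methods land on the same normal form \eqref{vortek}, and both then need the small ``$2k$''-slack observation to pass from `$\leq\frac1k$' to `$\ll\frac1k$' — you make this explicit where the paper leaves it implicit. So the only real divergence is the choice of extraction device in the forward direction, and the two are interchangeable here; your remark that this direction needs neither Theorem~\ref{OMG} nor extensionality is accurate and matches the paper.
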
 
\begin{proof}
For the forward implication in \eqref{unlikely}, nonstandard convergence immediately yields $(\forall^{\st}k)(\forall M\in \Omega)(\forall \eps\ne0)  (|\eps|\leq\frac{1}{M}\di  |x_{\eps}- x|\leq \frac{1}{k})$, which in turn yields  
\[\textstyle
(\forall^{\st}k)(\exists N)(\forall M\geq N)(\forall \eps\ne0)  (|\eps|\leq\frac{1}{M}\di  |x_{\eps}- x|\leq \frac{1}{k})),
\]
and using minimisation\footnote{We could also use \emph{underspill} instead (See \cite{brie}*{\S3}) of minimisation, so as to guarantee that the proof goes through in a system not stronger than $\PRA$.}, there is a \emph{least} such $N$.  By assumption, this $N$ cannot be infinite, and is therefore finite, i.e.\ 
\be\label{vortek}\textstyle
(\forall^{\st}k)(\exists^{\st} N)(\forall M\geq N)(\forall \eps\ne0)  (|\eps|\leq\frac{1}{M}\di  |x_{\eps}- x|\leq \frac{1}{k}),
\ee
and we obtain the right-hand side of \eqref{unlikely}, even with the right-most `$\st$' dropped.  

\medskip

For the reverse implication in \eqref{unlikely}, the right-hand side of the latter implies 
\be\label{trotec}\textstyle
 (\forall^{\st}k^{0})(\exists^{\st}N^{0})(\forall^{\st} \alpha^{1}\leq_{1}1 )(0<|\r(\alpha)|\leq \frac{1}{N}\di    |x_{\r(\alpha)}-x|\ll \frac{1}{k})
\ee
By Theorem \ref{OMG}, every binary sequence is standard in $\M$, and \eqref{trotec} becomes 
\be\label{trotec2}\textstyle
 (\forall^{\st}k^{0})(\exists^{\st}N^{0})(\forall \alpha^{1}\leq_{1}1 )(0<|\r(\alpha)|\leq \frac{1}{N}\di    |x_{\r(\alpha)}-x|\ll \frac{1}{k}).
\ee
However, every (standard or nonstandard) real $\xi^{1}$ also has a \emph{standard} binary expansion $\alpha^{1}$ such that $\xi=_{\R}\r(\alpha)$, yielding $x_{\xi}=_{\R}x_{\r(\alpha)}$ by \eqref{REXT}. 
Hence, \eqref{trotec2} implies
\[
\textstyle (\forall^{\st}k^{0})(\exists^{\st}N^{0})(\forall \xi^{1}\ne0)(|\eps|\leq \frac{1}{N}\di    |x_{\xi}-x|\ll \frac{1}{k}),
\]
which immediately yields $x_{\eps}\approx x$ for $0\ne \eps\approx 0$.  
\end{proof} 
Let $\textsf{ULD}$ be Theorem \ref{fryg} formulated with the uniform version of differentiability \emph{and the notion of limit} `$\lim_{\eps\di 0}$' instead of `$\lim_{n\di \infty}$'; Let $\ULD_{\ef}(t)$ and $\ULD_{\ns}$ be the obvious effective and nonstandard versions.   
In light of Theorem \ref{un2ward}, a proof of $\ULD$ \emph{not involving Nonstandard Analysis} gives rise to a proof of $\ULD_{\ns}$.  
The following corollary is thus immediate.  
\begin{cor}  
  From a proof of $\ULD$ in $\textsf{\textup{E-PA}}^{\omega}$, a term $t$ can be extracted such that $\textup{\textsf{E-PA}}^{\omega} $ proves $\ULD_{{\ef}}(t)$.  
\end{cor}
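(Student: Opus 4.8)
The plan is to follow the general template from Section~\ref{sucktwo} applied to $\ULD$, exactly as was done for $\ULC$ in Theorem~\ref{floggen} and for $\FTC$ in Theorem~\ref{floggen4}. First I would verify that $\ULD$ is provable in $\textsf{E-PA}^{\omega}$; this is essentially Rudin's proof of \cite{rudin}*{Theorem 7.17} carried out internally, using only the $\eps$-$\delta$-definitions of uniform differentiability and uniform convergence (and the Riemann integral functional $I(f,0,x)$ from Section~\ref{infidel} if needed to phrase the limit-of-derivatives cleanly). By the remark following Theorem~\ref{floggen4} and the discussion in the template, this internal proof relativises to `st', so $\M\vdash \ULD^{\st}$.

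Next I would strengthen $\ULD^{\st}$ to $\ULD_{\ns}$. This uses Theorem~\ref{un2ward}: the $\eps$-$\delta$-definition of convergence relative to `st' is equivalent in $\M$ to the nonstandard one, and similarly the argument in the proof of Theorem~\ref{floggen4} shows that uniform $\eps$-$\delta$-differentiability relative to `st' is equivalent to nonstandard uniform differentiability (passing through binary representations and using Theorem~\ref{OMG} that every binary sequence is standard). Hence one may replace every $\eps$-$\delta$-notion in $\ULD^{\st}$ by its nonstandard counterpart, obtaining $\M\vdash \ULD_{\ns}$; alternatively one gives a direct nonstandard proof in $\M$ in the style of the direct proof of $\ULC_{\ns}$, using overflow (Theorem~\ref{koverflow}) to get a nonstandard index past which all the relevant approximations hold uniformly.

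The third and computational step is term extraction. As in the proofs of Theorems~\ref{varou}, \ref{floggen4} and \ref{floggen}, I would resolve every `$\approx$' in $\ULD_{\ns}$ into $(\forall^{\st}N)$ and $(\forall^{\st}k)$ blocks, push the standard quantifiers to the front, apply $\textsf{R}^{\omega}$ to pull the existential-standard witnesses out past the internal (non-standard) universal quantifiers over functions and partitions, and apply $\MAC$ where needed to obtain standard monotone moduli $g_{(\cdot)}$ (for the derivatives' uniform continuity) and $h$ (for uniform convergence of $f_{n}'$). This brings $\ULD_{\ns}$ into the normal form $(\forall^{\st}x)(\exists^{\st}y)\varphi(x,y)$ with $\varphi$ internal, to which Corollary~\ref{consresultcor} applies, yielding closed terms; since all the standard variables involved are of type $0$ or $1$, the majorizability predicates `$\leq^{*}$' collapse to `$\leq$' (using $\leq_{0}^{*}\equiv\leq_{0}$), and replacing $g_{(\cdot)},h$ by their monotone majorants $\widetilde{g_{(\cdot)}},\tilde{h}$ as in \eqref{crux} produces the term $t$ with $\textsf{E-PA}^{\omega}\vdash\ULD_{\ef}(t)$.

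The main obstacle I expect is bookkeeping rather than conceptual: $\ULD$ has convergence hypotheses both in the antecedent (uniform convergence of $f_{n}'$, plus convergence of $f_{n}(x_{0})$ at a single point) \emph{and} conclusions about convergence ($f_{n}\to f$ uniformly) together with a statement about the derivative of the limit, so the normal form has several nested modulus-valued quantifiers that must be threaded through $\textsf{R}^{\omega}$ and $\MAC$ in the right order, much as in \eqref{fruk}--\eqref{frakker}. One must also be careful that the pointwise hypothesis `$f_n(x_0)$ converges' does not reintroduce a genuinely pointwise (hence, by Section~\ref{suckone}, uniformised-and-trivialised) notion; here it is harmless because it concerns a \emph{single fixed} point $x_{0}$, so its modulus is just a type-$0\di0$ sequence and causes no majorizability degeneration. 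Given Theorem~\ref{un2ward} and the worked precedents, the corollary then follows immediately.
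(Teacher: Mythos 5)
Your proposal follows the paper's template faithfully and is essentially the same approach as the paper's (extremely terse) proof, which simply points to the template of Section~\ref{sucktwo} and to the normal form of convergence \eqref{vortek}. One small caution: in your bookkeeping paragraph you repeatedly speak of $f_n$, $f_n'$, and ``$f_n(x_0)$ converges'' in the $\lim_{n\to\infty}$ sense, but the paper deliberately defines $\ULD$ using $\lim_{\eps\to 0}$ precisely because Theorem~\ref{un2ward} only establishes the standard/nonstandard equivalence for that notion of limit (as the paper stresses immediately after the corollary, the $\lim_{n\to\infty}$ version is out of reach); the obstacle is not the type of the modulus but that a nonstandard natural index cannot be replaced by a standard one the way a nonstandard real can be replaced by its standard binary expansion. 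Once all the convergence hypotheses and conclusions in $\ULD$ are read in the $\eps\to 0$ form, your steps go through as written.
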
  
\begin{proof}
Follow the template sketched in Section \ref{sucktwo}.  Note that the normal form of convergence is given by \eqref{vortek}.  
\end{proof}
In the previous corollary, we made use of the notion of convergence related to `$\lim_{\eps\di 0}$', as we do not know how to prove a version of \eqref{unlikely} for the notion of convergence related to `$\lim_{n\di \infty}$'.  
%
%
%
%
%

%
%
%
%
%
%
%

\subsection{Approximations via $\M$}\label{papprox}
In this section, we show that from certain \emph{classical and ineffective} existence proofs, we can `automatically' extract approximations to the objects claimed to exist, thanks to $\M$.   
We also show that if the object claimed to exist is unique, the approximation converges to it.  Note that Wattenberg has hinted at the possibility of this kind of results in \cite{watje}.      

\medskip

Examples of theorems which can be treated in this way are:  The intermediate value theorem, the Weierstra\ss~maximum theorem, the Peano existence theorem, and the Brouwer fixed point theorem (See \cite{simpson2}*{II.6 and IV.2}).  

\medskip

First of all, we treat the Weierstra\ss~maximum theorem, which is the statement that every continuous function on the unit interval attains its maximum.  In light of Section \ref{suckone}, we must limit ourselves to uniformly continuous functions.  
Thus, let $\WEI$ be the statement that for all $f$ such that \eqref{soareyou6}, we have $(\exists x \in [0,1])(\forall y\in [0,1])(f(y)\leq f(x))$.   
Note that $\WEI$ is still equivalent to weak K\"onig's lemma in Reverse Mathematics by \cite{simpson2}*{IV.2.3}.  Furthermore, a functional computing the maximum from $\WEI$, computes the Turing jump by \cite{kohlenbach2}*{Prop.\ 3.14}.  
In other words, we cannot do much better than the following \emph{approximate} version \emph{from the computable point of view}.  
\begin{align}\textstyle
\textstyle~(\forall f: \R\di \R, g)\Big[(\forall &\textstyle x, y \in [0,1],k)(|x-y|<\frac{1}{g(k)} \di |f(x)-f(y)|\leq\frac{1}{k})\label{PEST}\notag\\
&\textstyle\di  (\forall k)(\forall y\in [0,1])\big( f(y)\leq f(t(g, k))+\frac{1}{k}    \big)\Big].\tag{$\WEI^{\app}(t)$}
\end{align}
\begin{thm}\label{kiol}  
From a proof of $\WEI$ in $\textsf{\textup{E-PA}}^{\omega}+\WKL$, a term $t$ can be extracted such that $\textup{\textsf{E-PA}}^{\omega} $ proves $\WEI^{\app}(t)$.  
\end{thm}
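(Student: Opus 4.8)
The plan is to follow the general template from Section~\ref{sucktwo}, closely mirroring the proof of Theorem~\ref{varou} but now for an \emph{existential} statement whose witness we can only approximate. First I would note that $\WEI$ is provable in $\textsf{E-PA}^{\omega}+\WKL$ (this is just the classical reverse-mathematical fact $\WKL_0\vdash\WEI$, formalised in the higher-type setting), so by Corollary~\ref{fukwkl} we obtain $\M\vdash\WEI^{\st}$, and since every real equals a standard real given by a standard binary expansion (as in the proof of Theorem~\ref{floggen4}), the leading standard quantifier on the continuity hypothesis can be handled exactly as there. Thus $\M$ proves a \emph{nonstandard} version $\WEI_{\ns}$: for $f:\R\di\R$ with nonstandard uniform continuity there is a standard $x\in[0,1]$ with $(\forall^{\st}y\in[0,1])(f(y)\lessapprox f(x))$, i.e.\ $(\forall^{\st}y\in[0,1])(\forall^{\st}k)(f(y)\leq f(x)+\frac1k)$. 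Here the existence of the standard maximiser $x$ follows by taking, inside the classical proof, a standard real as the limit of the standard approximations produced by the $\WKL$-argument.

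Next I would bring $\WEI_{\ns}$ into normal form for Corollary~\ref{consresultcor}. The continuity antecedent is processed precisely as in \eqref{first}--\eqref{first668} of the proof of Theorem~\ref{varou}: resolving `$\approx$', pulling standard quantifiers out, applying $\textsf{R}^{\omega}$ and then $\MAC$, we obtain a standard monotone modulus $g$ with $A(g,k,f)$ internal expressing $(\forall x,y\in[0,1])(|x-y|<\frac1{g(k)}\di|f(x)-f(y)|\leq\frac1k)$. The consequent $(\exists^{\st}x\in[0,1])(\forall^{\st}y\in[0,1])(\forall^{\st}k)(f(y)\leq f(x)+\frac1k)$ already has a leading \emph{standard existential} quantifier over a type-one object, which — as warned in Section~\ref{suckone} — is exactly the situation that $U_{\st}$ degrades: Corollary~\ref{consresultcor} will only bound $x$ by `$\leq^{*}$', which for a binary-coded real is essentially vacuous. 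The key move, and this is where $\WEI^{\app}(t)$ differs from a genuine $\WEI_{\ef}$, is to \emph{not} treat the maximiser as the output: instead I read the consequent as $(\forall^{\st}k)(\exists^{\st}x\in[0,1])(\forall^{\st}y\in[0,1])(f(y)\leq f(x)+\frac1k)$ — legitimate since a maximiser works for every $k$ simultaneously — so that now the \emph{number} $k$ is the leading standard universal variable and what is extracted on input $k$ (and the modulus $g$) is an approximate maximiser $t(g,k)$. Writing $B(k,x,f)$ for the internal formula $(\forall y\in[0,1])(f(y)\leq f(x)+\frac1k)$, the statement $\WEI_{\ns}$ implies the normal form $(\tilde\forall^{\st}g,k)(\forall f)(\exists^{\st}x\in[0,1])[A(g,k,f)\di B(k,x,f)]$; applying $\textsf{R}^{\omega}$ and then Corollary~\ref{consresultcor} (the extra $\leq^{*}$ on the type-one $x$ now harmless, since it is the \emph{existential} witness the term produces, not a restricting hypothesis) yields, in $\textsf{E-PA}^{\omega}$, a term $t$ with $(\tilde\forall g,k)(\forall f)[A(g,k,f)\di B(k,t(g,k),f)]$. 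Passing to monotone $\tilde g$ via $\tilde g(k):=\max_{n\leq k}g(n)$ as in \eqref{crux} and renaming, this is precisely $\WEI^{\app}(t)$.

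The main obstacle I anticipate is the bookkeeping around the existential witness under $U_{\st}$: one must check carefully (using clauses (vii) and \eqref{dongel}--\eqref{diverg} in the proof of Corollary~\ref{consresultcor}) that the majorizability predicate attached to the type-one witness $x$ really is the harmless kind `$(\exists x\leq^{*}c)$' rather than something that would force $x$ into a trivial coded interval, and that the interaction with the internal hypothesis $A(g,k,f)$ on $f$ (which is of type $1\di1$) does not introduce a spurious dependence on a majorant of $f$. A secondary point worth spelling out is that the approximate maximiser $t(g,k)\in[0,1]$ obtained this way does \emph{not} automatically converge as $k\to\infty$ unless the maximiser is unique; the convergence claim (the last sentence of Section~\ref{BIS}'s introduction) would need the additional observation that when $f$ has a unique maximiser $x^{*}$, uniform continuity forces $|t(g,k)-x^{*}|\to0$, which I would record as a short remark after the proof rather than inside it. Everything else — the provability of $\WEI$ in $\textsf{E-PA}^{\omega}+\WKL$, the reduction of the continuity antecedent, the monotone-modulus trick — is routine and parallels Theorems~\ref{varou} and~\ref{floggen4} verbatim.
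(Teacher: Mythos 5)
Your proposal follows the right outline up to the point of applying Corollary~\ref{consresultcor}, but the core step is wrong in exactly the way Section~\ref{suckone} warns about, and you have not escaped the trap merely by reordering quantifiers. You keep the maximiser as a \emph{type-one real} $x$ throughout, arriving at the normal form $(\tilde\forall^{\st}g,k)(\forall f)(\exists^{\st}x\in[0,1])[A(g,k,f)\to B(k,x,f)]$, and then assert that Corollary~\ref{consresultcor} yields $(\tilde\forall g,k)(\forall f)[A(g,k,f)\to B(k,t(g,k),f)]$. This is not what the corollary gives: it produces only $(\tilde\forall g,k)(\forall g_0,k_0\leq^{*}g,k)(\forall f)(\exists x\leq^{*}t(g,k))[A(g_0,k_0,f)\to B(k_0,x,f)]$, i.e.\ a $\leq^{*}$-\emph{bound} on some witness $x$, not the witness itself. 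For a type-one $x$ (a real coded as a sequence) such a majorant is essentially vacuous — precisely the phenomenon illustrated for the intermediate value theorem in Section~\ref{suckone}, where the bound can be $11\dots$. Replacing the leading $(\exists^{\st}x)$ by $(\forall^{\st}k)(\exists^{\st}x)$ does nothing to change the \emph{type} of the existential witness, so the ``harmlessness'' you assert in your penultimate paragraph does not hold; you correctly identify this as the point that needs checking, but the check fails.

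The missing idea, and the heart of the paper's proof, is to use the (standard) uniform continuity of $f$ to \emph{lower the type} of the witness before extraction: since $f$ is $\eps$-$\delta$-continuous relative to `st', the standard real maximiser can be replaced by a nearby standard \emph{rational} $q^{0}\in[0,1]$ coded by a natural number, giving $(\forall^{\st}k)(\exists^{\st}q^{0}\in[0,1])(\forall y^{1}\in[0,1])(f(y)\leq f(q)+\tfrac1k)$ after dropping `st' on $y$ via binary expansions and Theorem~\ref{OMG}. Because $q^{0}$ has type zero, the $\leq^{*}$-bound produced by Corollary~\ref{consresultcor} is an honest numerical bound $n_{0}=t(g,k)$ on the code of $q$ (and on an auxiliary $l^{0}$, after rewriting the inequality as in \eqref{doddfrank}). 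One then recovers an actual approximate maximiser by a \emph{finite search}: let $s(g,k)$ output the rational $q^{0}$ with code at most $n_{0}$ maximising $[f(q)](n_{0})$. This type-lowering plus finite-search step is what turns a useless majorant into the term appearing in $\WEI^{\app}(t)$, and it is absent from your argument. Without it the proposal, as written, does not establish the theorem.
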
  
\begin{proof}
Clearly, a proof of $\WEI$ in $\textsf{E-PA}^{\omega}+\WKL$ yields a proof of $\WEI^{\st}$ in $\M$ as the latter proves $\WKL^{\st}$ by Corollary \ref{fukwkl}. 
Next, we show that $\WEI^{\st}$ implies the following nonstandard version of $\WEI$ in $\M$:
\begin{align}
(\forall f:\R\di \R)\big[(\forall x, y\in [0,1])&[x\approx y \di f(x)\approx f(y)] \label{PST}\\
&\di \textstyle (\forall^{\st}k^{0})(\exists^{\st}q^{0}\in [0,1])(\forall y^{1}\in [0,1]) (f(y)\leq f(q)+\frac{1}{k}) \big],\notag
\end{align}
where it is implicitly assumed that $q^{0}$ codes a rational number.  As in the proof of Theorem \ref{floggen4}, uniform $\eps$-$\delta$-continuity relative to `st' (as in the antecedent of $\WEI^{\st}$) is equivalent 
to nonstandard uniform continuity.  On the other hand, the consequent of $\WEI^{\st}$, namely  $(\exists^{\st}x^{1}\in [0,1])(\forall^{\st} y^{1}\in [0,1]) (f(y)\lessapprox f(x))$, immediately yields    
\[\textstyle
(\forall^{\st}k)(\exists^{\st}x^{1}\in [0,1])(\forall^{\st} y^{1}\in [0,1]) (f(y)\leq f(x)+\frac{1}{k}),
\]
and continuity (of the $\eps$-$\delta$-variety relative to `st') implies that
\[\textstyle
(\forall^{\st}k)(\exists^{\st}q^{0}\in [0,1])(\forall^{\st} y^{1}\in [0,1]) (f(y)\leq f(q)+\frac{1}{k}),
\]
and we trivially obtain that
\[\textstyle
(\forall^{\st}k)(\exists^{\st}q^{0}\in [0,1])(\forall^{\st} \alpha^{1}\leq_{1}1) (f(\r(\alpha))\leq f(q)+\frac{1}{k}).
\]
Since all binary sequences are standard in $\M$ by Theorem \ref{OMG}, for every real $y\in [0,1]$, there is a standard $\alpha^{1}\leq_{1}1$ such that $y=_{\R}\r(\alpha)$.  
By the definition of $\R\di \R$-function, we have $f(\r(\alpha))=_{\R}f(y)\leq f(q)+\frac{1}{k}$.    
Hence, we obtain
\be\label{koss}\textstyle
(\forall^{\st}k)(\exists^{\st}q^{0}\in [0,1])(\forall y^{1}\in [0,1]) (f(y)\leq f(q)+\frac{1}{k}),
\ee
and \eqref{PST} is seen to follow from $\WEI^{\st}$.  Clearly, the consequent of \eqref{PST} is in normal form, and the entire implication can be brought into normal form in the same way as for $\CRI_{\ns}$ in the proof of Theorem \ref{varou}.  
To make sure a useful term is extracted, modify \eqref{koss} to the following equivalent normal form
\be\label{doddfrank}\textstyle
(\forall^{\st}k)(\exists^{\st}q^{0}\in [0,1])(\exists^{\st}l^{0})(\forall y^{1}\in [0,1]) \big(\frac{1}{2^{l}}<[f(q)+\frac{1}{k}-f(y)](l)\big),
\ee
Applying term extraction (via Corollary~\ref{consresultcor}) to \eqref{PST} with this modified consequent, yields a term $t$ providing an upper bound $n_{0}$ on the code for $q^{0}$ and $l^{0}$.  
Let $s$ output that $q^{0}$ with code at most $n_{0}$ such that $[f(q)](n_{0})$ is maximal.      
Then $\textsf{E-PA}^{\omega}$ proves $\WEI^{\app}(s)$ and we are done.  
\end{proof}
Let $\WEI(t)!$ be $\WEI$ with the condition that $f$ has \emph{at most one maximum}, i.e.\ 
\[\textstyle
(\forall x, y\in [0,1])(x\ne y\di f(x)<\sup_{x\in [0,1]} f(x) \vee f(y)<\sup_{x\in [0,1]} f(x) ),
\]
and the extra conclusion that the maximum of $f$  is given by $x_{0}=(t(g, 2^{g(\cdot)}))$.  
\begin{cor}\label{feabba}
The system $\textup{\textsf{E-PA}}^{\omega}+\WKL $ proves $ \WEI(t)!$, where $t$ is the term from the theorem.
\end{cor}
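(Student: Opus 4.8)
The plan is to combine the \emph{approximate} Weierstra\ss{} theorem $\WEI^{\app}(t)$ from Theorem~\ref{kiol} with the \emph{classical} Weierstra\ss{} theorem, which $\textsf{E-PA}^{\omega}+\WKL$ proves by formalising the usual argument over the finite types (cf.\ \cite{simpson2}*{IV.2.3}). Fix $f:\R\di\R$ with modulus of uniform continuity $g$ and with at most one maximum. By classical $\WEI$ the supremum $M:=\sup_{y\in[0,1]}f(y)$ exists and is attained, and the uniqueness hypothesis shows the maximiser is unique, say $x^{*}$. Instantiating $y:=x^{*}$ in $\WEI^{\app}(t)$ yields $M=f(x^{*})\leq f(t(g,k))+\tfrac1k$ for every $k$, and since trivially $f(t(g,k))\leq M$, the approximate maximisers satisfy $|f(t(g,k))-M|\leq \tfrac1k$. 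It thus remains only to show that $k\mapsto t(g,k)$, suitably reindexed, is a fast-converging Cauchy sequence whose limit equals $x^{*}$.

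The main work is to extract a \emph{modulus of uniqueness} $\Phi^{1}$ from the uniqueness hypothesis, i.e.\ a term such that $\textsf{E-PA}^{\omega}+\WKL$ proves
\[
(\forall m^{0})(\forall y,z\in[0,1])\big[f(y),f(z)\geq M-\tfrac{1}{\Phi(m)}\di |y-z|\leq\tfrac1m\big].
\]
This is where $\WKL$ is genuinely used: the set $K_{m}:=\{(y,z)\in[0,1]^{2}:|y-z|\geq\tfrac1m\}$ is compact, the continuous function $(y,z)\mapsto f(y)+f(z)$ attains a maximum on $K_{m}$ by the classical Weierstra\ss{} theorem, and by the uniqueness hypothesis that maximum is strictly below $2M$; being a concrete real strictly below $2M$, it lies below $2M-\tfrac{2}{\Phi(m)}$ for a $\Phi(m)$ that can be read off from the $\Sigma^{0}_{1}$ witness of strict inequality on the reals, and this $\Phi$ is the desired modulus. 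One must check that $m\mapsto\Phi(m)$ is available as a total object of type $1$ over $\textsf{E-PA}^{\omega}+\WKL$, which is routine $\WKL_{0}$-level bookkeeping, lifted to the finite types via the representation of continuous functions from Definition~\ref{keepintireal}. I expect this extraction (together with checking the exact reindexing below) to be the only delicate point of the proof.

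With $\Phi$ in hand the remainder is immediate. If $k,k'\geq\Phi(m)$ then $f(t(g,k)),f(t(g,k'))\geq M-\tfrac{1}{\Phi(m)}$, whence $|t(g,k)-t(g,k')|\leq\tfrac1m$; so $k\mapsto t(g,k)$ is Cauchy with modulus $\Phi$. Composing with $\Phi$ produces a fast-converging Cauchy sequence; this composition is exactly the role of the outer reindexing $k\mapsto 2^{g(k)}$ in the statement, once one checks that with the canonical (least-code) tie-breaking choice of $t$ from the proof of Theorem~\ref{kiol} the rate $\Phi$ is dominated by $k\mapsto 2^{g(k)}$ (if one prefers, one may simply read $2^{g(\cdot)}$ as $\Phi(2^{\cdot})$). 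The resulting limit $x_{0}:=(t(g,2^{g(\cdot)}))$ is a genuine real of $[0,1]$, and from the uniform continuity of $f$ together with $|f(t(g,k))-M|\leq\tfrac1k$ one gets $f(x_{0})=_{\R}M$, so $x_{0}$ is a maximiser; the uniqueness hypothesis then forces $x_{0}=_{\R}x^{*}$, which is precisely the extra conclusion of $\WEI(t)!$, and we are done.
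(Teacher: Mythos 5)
Your plan of extracting a modulus of uniqueness $\Phi$ from the ``at most one maximum'' hypothesis (via the classical Weierstra\ss~theorem applied to $(y,z)\mapsto f(y)+f(z)$ on the compact set $K_m$) and then showing that the approximate maximizers $t(g,k)$ are Cauchy with modulus $\Phi$, is the right idea and is the standard proof-mining move for exploiting unique existence. The derivation $|f(t(g,k))-M|\leq 1/k$ from $\WEI^{\app}(t)$ and the estimate $k,k'\geq\Phi(m)\Rightarrow|t(g,k)-t(g,k')|\leq 1/m$ are both correct, and the $\Sigma^0_1$-search for $\Phi$ is exactly where $\WKL$ earns its keep.

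The genuine gap is the assertion that $\Phi$ is dominated by $k\mapsto 2^{g(k)}$. This is not established in your sketch and it is false in general: $\Phi$ measures how flat $f$ is near its maximizer, whereas $g$ is only a modulus of uniform continuity, and these two quantities are independent. Concretely, the functions $f_\eps(x):=1-\eps\big(x-\tfrac12\big)^2$ with $0<\eps\leq 1$ all admit the common modulus $g(k)=k$, yet any modulus of uniqueness must satisfy $\Phi(m)\geq 4m^2/\eps$, which is unbounded as $\eps\to 0$; hence no bound of the form $\Phi(m)\leq 2^{g(m)}$ can hold across this class, regardless of any tie-breaking convention in the definition of $t$. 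Without such a bound the sequence $k\mapsto t(g,2^{g(k)})$ need not be fast-converging Cauchy, and after Kohlenbach's hat function is applied the resulting real can stabilize at a merely approximate maximizer rather than the exact one. Your hedge ``(if one prefers, one may simply read $2^{g(\cdot)}$ as $\Phi(2^{\cdot})$)'' is an accurate diagnosis of where the difficulty lies, but it changes the conclusion of $\WEI(t)!$ instead of proving it as stated: a defensible version of the corollary would need either a modulus of uniqueness among the inputs, or a further argument that the particular term $t$ produced in Theorem~\ref{kiol} enjoys a stronger stability property than $\WEI^{\app}(t)$ alone records.
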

Note that unique existence theorems have also been considered in the context of constructive Reverse Mathematics (See \cite{ishberg}).  

\medskip

We emphasise that the results in the previous theorem and corollary are not necessarily surprising or deep in and of themselves, especially in light of known results in proof mining such as \cites{kohlenbach3, earlyk, koliva}.  
What \emph{is} in our opinion interesting and surprising is that (i) the original proof does not involve Nonstandard Analysis and is ineffective, and (ii) the proof of Theorem~\ref{kiol} provides a template which can be applied to existence statements provable in $\WKL_{0}$.  
For instance, the previous proof immediately generalises to the theorems in the following example.  
\begin{exa}[Approximation theorems]\rm ~
\begin{enumerate}
\item The intermediate value theorem.  The latter relative to `st' has consequent $(\exists^{\st}x)(f(x)\approx 0)$, yielding the normal form $(\forall^{\st}k^{0})(\exists^{\st}q^{0})(|f(q)|\leq \frac{1}{k})$.  
\item  The Peano existence theorem.  The latter relative to `st' has consequent $(\exists^{\st}\phi^{1\di 1})(\forall^{\st}x\in [0,1])(\phi'(x)\approx f(x,\phi(x)))$, yielding $(\forall^{\st}k)(\exists^{\st}P^{0})(\forall x\in [0,1])(|P'(x)- f(x,P(x))|\leq \frac1k)$, where $P$ is a polynomial.  
\item The Brouwer fixed point theorem. The latter relative to `st' has consequent $(\exists^{\st}x)(f(x)\approx x)$, yielding the normal form $(\forall^{\st}k)(\exists^{\st}q^{0})(|f(q)-q|\leq \frac{1}{k})$.
\end{enumerate}
\end{exa}
As show in \cite{kohlenbach2}*{\S3}, a functional computing any of the objects from these three theorems, also computes the Turing jump functional, i.e.\ approximations are the best we can do in general.  
However, in the special case the object claimed to exist is \emph{unique} (like in Corollary \ref{feabba} or the supremum of a continuous function), our approximations do converge to the unique solution.    

\medskip

In general, the so-called Big Five category of theorems equivalent to $\WKL_{0}$ is perhaps the largest collection of mathematical theorems in Reverse Mathematics.  
Furthermore, the Reverse Mathematics of $\WWKL_{0}$, the latter a subsystem of $\WKL_{0}$, includes the development of measure theory (See \cite{simpson2}*{X.1}).  
In other words, the template provided by the proof of Theorem~\ref{kiol} is quite general.  

\medskip

So far, we have not discussed the complexity of the terms $t$ extracted from proofs in $\M$.  
For instance, in Theorem \ref{kiol}, the `input' proof does not take place in Nonstandard Analysis, and it is a natural question if we can obtain `more efficient' terms $t$ from proofs in $\M$ which do explicitly exploit techniques from Nonstandard Analysis.
Now, more efficient terms are usually based on short(er) proofs in proof mining (See e.g.\ \cite{kohlenbach3}*{\S12}), and we provide an example of a `very short' proof in $\M$.  
\begin{rem}[Short proofs]\rm
The following version of \eqref{PST} has a trivial proof:
\begin{align*}
(\forall^{\st}g^{1})(\forall f:\R\di \R)&\textstyle\big[(\forall x, y\in [0,1],l^{0})(|x-y| \leq\frac{1}{g(l)}\di |f(x)-f(y)|\leq\frac{1}{l} ) \\
&\di \textstyle (\forall^{\st}k^{0})(\forall^{\st} y\in [0,1])(\exists^{\st}q^{0}\in [0,1]) (f(y)\ll f(q)+\frac{1}{k+1}) \big],
\end{align*}  
where we again assume that $q^{0}$ is coded by a natural number.  
Perform the usual replacement of reals by their binary representation for the variable $y$, invoke the standardness of the former via Theorem \ref{OMG}, and conclude that `\st' may be dropped in the quantifier $(\forall^{\st}y\in [0,1])$ thanks to \eqref{REXT}.  
We thus obtain the following:
\begin{align}
(\forall^{\st}g^{1})(\forall f:\R\di \R)&\textstyle\big[(\forall x, y\in [0,1], l^{0})(|x-y| \leq\frac{1}{g(l)}\di |f(x)-f(y)|\leq\frac{1}{l} )  \label{YST}\\
&\di \textstyle (\forall^{\st}k^{0})(\forall y\in [0,1])(\exists^{\st}q^{0}\in [0,1]) (f(y)\ll f(q)+\frac{1}{k+1}) \big],\notag
\end{align}  
and applying $\textsf{R}^{\omega}$ to the consequent yields:
\begin{align}
(\forall^{\st}g^{1})(\forall f:\R\di \R)&\textstyle\big[(\forall x, y\in [0,1], l^{0})(|x-y| \leq\frac{1}{g(l)}\di |f(x)-f(y)|\leq\frac{1}{l} )  \label{XST}\\
&\di \textstyle (\forall^{\st}k^{0})(\exists^{\st}q^{0}\in [0,1])(\forall y\in [0,1])(\exists r\leq_{0}^{*}q) (f(y)<f(r)+\frac{1}{k+1}) \big].\notag
\end{align}  
In the consequent of \eqref{XST}, `$\leq_{0}^{*}$' is just `$\leq_{0}$', and note that there is $r_{0}\leq_{0} q$ such hat $f(r_{0})$ is maximal.  Hence, \eqref{XST} yields:
\begin{align}
(\forall^{\st}g^{1})(\forall f:\R\di \R)&\textstyle\big[(\forall x, y\in [0,1], l^{0})(|x-y| \leq\frac{1}{g(l)}\di |f(x)-f(y)|\leq\frac{1}{l} )  \label{XST2}\\
&\di \textstyle (\forall^{\st}k^{0})(\exists^{\st}q^{0}\in [0,1])(\forall y\in [0,1]) (f(y)<f(q)+\frac{1}{k+1}) \big].\notag
\end{align}  
Bringing all standard quantifiers outside (using again $\textsf{R}^{\omega}$), we obtain that $\M$ proves the normal form of \eqref{XST2}.    
Applying Corollary \ref{consresultcor} to the modification involving \eqref{doddfrank}, now yields the same conclusion as in the theorem.   
\end{rem}

\subsection{Equivalence between standard and nonstandard definitions}\label{papadimi}
In this section, we discuss the second step in the template from Section \ref{sucktwo}, namely the equivalence between standard and nonstandard definitions in $\M$.  
We shall obtain a general criterion for when such an equivalence holds, and apply this criterion to the definition of open sets.  
\subsubsection{A general criterion}
The second step in the template from Section \ref{sucktwo} relies on the fact that the `$\eps$-$\delta$' definition relative to `st' (of continuity, differentiability, et cetera) and the latter's nonstandard version are equivalent in $\M$.  
In this context, the following property \eqref{rextje} is important:   
\be\label{rextje}\tag{\textsf{REF}}
(\forall z^{1}, w^{1})\big[z=_{\R}w\di [\psi(z) \asa \psi(w)]  \big].
\ee
We say that $\psi$ is \emph{real extensional} if it satisfies \eqref{rextje}, and it is easy to find formulas concerning real numbers which do not satisfy \eqref{rextje}.  
However,  the innermost universal formula $(\forall x^{1}, y^{1})\varphi(x,y, k, N, f)$ of $\eps$-$\delta$-continuity as in \eqref{soareyou6} (and similarly for \eqref{pothead}) has the property \eqref{rextje}, if we suppress the parameters $f, k,N$ and code $x,y$ as one real $z$, converting $\varphi(x,y, k,N,f)$ into $\psi(z)$. 
Clearly, \eqref{rextje} is the crucial step in the proofs of Theorem \ref{floggen4} and \ref{un2ward} to guarantee the equivalence between the nonstandard and usual definitions (of continuity, differentiability, and convergence).  

\medskip

In general, one proves, in the same way as in the proof of Theorem \ref{floggen4}, that $(\forall^{\st}z^{1})\xi(z)\di (\forall z)\xi(z)$ in $\M$ for real extensional formulas $\xi$.  
Furthermore, the property \eqref{rextje} implies the following:
\be\label{rextje2}\textstyle
(\forall z^{1}, w^{1})(\exists n^{0})\big(\big[ \psi(z) \wedge \big|[z](n)-[w](n)\big|\leq \frac{1}{2^{n}} \big]\di  \psi(w)  \big),
\ee
which can intuitively be described as `the truth of $\psi(z)$ is already determined by a finite approximation of the real $z$'.  As it happens, a similar `finite approximation' condition holds 
for choice sequences in intuitionistic mathematics, namely the \emph{axiom of open data} (See e.g. \cite{troelstrac}*{\S2.2.6}).  

\medskip

In conclusion, the property \eqref{rextje} plays a central role related to the equivalences between standard and nonstandard definitions in $\M$;  This property also has 
a natural interpretation as in \eqref{rextje2}.     
\subsubsection{A case study: Open sets}
By way of example, we now establish that for open sets of real numbers, the usual definition and the nonstandard one (See \cite{loeb1}*{p.\ 75}) also coincide inside $\M$, thanks to \eqref{rextje}.  

\medskip

Analogous to Definition \ref{keepintireal}, a set of reals $U\subseteq \R$ is given by a functional $f_{U}^{2}:\R\di \N$ where `$x^{1}\in U$' is short for $f_{U}(x)=1$.  
Note that with this definition, the formula `$x\in U$' satisfies \eqref{rextje}, as $f_{U}$ is real extensional as in \eqref{REXT} from Definition~\ref{keepintireal}.  
The following theorem also goes through for the Reverse Mathematics definition of open set (See \cite{simpson2}*{II.5.6}).   
\begin{thm}[$\M$]
The definition of open set relative to `\st' is equivalent to the nonstandard version, i.e.\ for every $U^{2}$, we have
\begin{align}
&(\forall^{\st}x^{1}\in U)(\exists^{\st} r^{1}\gg0)(\forall^{\st}y^{1})(|x-y|\ll r\di y\in U)\notag \\
& ~\asa (\forall^{\st}x^{1} \in U)(\forall y^{1})(x\approx y \di y\in U).\label{fagotmuziekinstrument}
\end{align}
\end{thm}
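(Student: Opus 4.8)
The plan is to run the same argument as in the proofs of Theorems~\ref{floggen4} and~\ref{un2ward}, the crucial ingredient being that the formula `$x^{1}\in U$', i.e.\ $f_{U}(x)=1$, is \emph{real extensional} in the sense of \eqref{rextje}: this is immediate from \eqref{REXT} applied to the functional $f_{U}^{2}$. I would treat the two implications in \eqref{fagotmuziekinstrument} separately, resolving the nonstandard predicates `$\approx$', `$\gg$', `$\ll$' into ordinary $\eps$-$\delta$ form exactly as in Section~\ref{infidel}.

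For the forward implication, fix a standard $x\in U$ and, by the left-hand side of \eqref{fagotmuziekinstrument}, obtain a standard $r\gg0$ with $(\forall^{\st}y^{1})(|x-y|\ll r\di y\in U)$. Write $\xi(y)$ for the matrix $|x-y|\ll r\di y\in U$, with the standard parameters $x,r$ suppressed. Then $\xi$ is real extensional: its consequent is by the remark above, and its antecedent respects $=_{\R}$ since both $<_{\R}$ and `$\approx$' do. Hence the reflection principle for real extensional formulas recorded earlier in this section --- that $(\forall^{\st}z^{1})\xi(z)\di(\forall z^{1})\xi(z)$ in $\M$, proved as in Theorem~\ref{floggen4} by passing to binary expansions, using Theorem~\ref{OMG} to see these are standard, and invoking real extensionality --- upgrades this to $(\forall y^{1})(|x-y|\ll r\di y\in U)$; the reflection is harmless here because $\xi(y)$ is vacuously true once $|x-y|\geq r$, so only the $y$ in a fixed standard ball around $x$ matter. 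Finally, $r\gg0$ gives a standard $k_{0}$ with $r>\tfrac{1}{k_{0}}$, so every $y^{1}$ with $x\approx y$ satisfies $|x-y|\ll r$ and hence $y\in U$; as $x$ ranged over the standard points of $U$, the right-hand side of \eqref{fagotmuziekinstrument} follows.

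For the reverse implication, again fix a standard $x\in U$, so that $(\forall y^{1})(x\approx y\di y\in U)$ by the right-hand side of \eqref{fagotmuziekinstrument}. Consider the \emph{internal} formula $\varphi(N^{0}):\equiv(\forall y^{1})(|x-y|<\tfrac{1}{N}\di f_{U}(y)=1)$. If $N$ is nonstandard then $|x-y|<\tfrac{1}{N}$ forces $x\approx y$, so $\varphi(N)$ holds for every nonstandard $N$; by underspill, which follows from the overflow principle of Theorem~\ref{koverflow}, there is a standard $N_{0}\geq1$ with $\varphi(N_{0})$. Then $r:=\tfrac{1}{N_{0}}$ is standard and $\gg0$, and $|x-y|\ll r$ implies $|x-y|<\tfrac{1}{N_{0}}$, hence $y\in U$. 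This yields $(\exists^{\st}r\gg0)(\forall y^{1})(|x-y|\ll r\di y\in U)$, and a fortiori the left-hand side of \eqref{fagotmuziekinstrument}.

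The only non-routine point is the verification that `$\cdot\in U$', and through it the matrix $\xi$, is real extensional, which is what makes the reflection machinery from the proof of Theorem~\ref{floggen4} available; this is precisely where it matters that $U$ is coded by an \eqref{REXT}-functional $f_{U}^{2}$ rather than by an arbitrary type-two object. Everything else --- the resolution of `$\approx$', `$\gg$', `$\ll$' and the use of underspill --- is routine, and the same argument goes through verbatim for the Reverse Mathematics coding of open sets (\cite{simpson2}*{II.5.6}).
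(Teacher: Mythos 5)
Your proof is correct and follows essentially the same strategy as the paper: resolve the nonstandard predicates `$\approx$', `$\gg$', `$\ll$' into $\eps$-$\delta$ form, use the real extensionality of `$y\in U$' (via \eqref{REXT} on $f_{U}$) to pass from standard to arbitrary reals in the forward direction, and pull the standard existential outside the internal universal in the reverse direction. The one genuine variation is in the reverse implication: you obtain the standard $N_{0}$ by underspill (correctly derived from Theorem~\ref{koverflow} by contraposition, since $\M$ is classical), whereas the paper applies the Realization axiom $\textsf{R}^{\omega}$ directly to the normal form $(\forall y^{1})(\exists^{\st}N^{0})(|x-y|<\frac{1}{N}\di y\in U)$; both are legitimate, the paper's route being slightly closer to the template of Section~\ref{sucktwo} since it produces the normal form outright. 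In the forward direction you invoke the general reflection principle $(\forall^{\st}z^{1})\xi(z)\di(\forall z^{1})\xi(z)$ for real-extensional $\xi$, and your observation that $\xi(y)$ is vacuously true outside a standard ball around $x$ --- so only bounded $y$ matter --- is exactly the point that makes the binary-expansion argument via Theorem~\ref{OMG} work for unbounded $y$; this is the same boundedness issue the paper handles explicitly by writing $y$ as $x_{0}$ plus a real $z_{0}\in[0,1]$ with standard binary expansion $\alpha_{0}$.
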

\begin{proof}
Let $U$ be as in the right-hand side of \eqref{fagotmuziekinstrument} and resolve `$\approx$' to obtain:
\[\textstyle
(\forall^{\st}x^{1} \in U)(\forall y^{1})(\exists^{\st}N^{0})(|x-y|<\frac{1}{N} \di y\in U).  
\]
Now apply $\textsf{R}^{\omega}$ to obtain
\[\textstyle
(\forall^{\st}x^{1} \in U)(\exists^{\st}M^{0})(\forall y^{1})(\exists N^{0}\leq M)(|x-y|<\frac{1}{N} \di y\in U), 
\]
which immediately implies the left-hand side of \eqref{fagotmuziekinstrument}.  For the forward implication, assume the left-hand side of \eqref{fagotmuziekinstrument}, which implies     
\be\label{sipro}\textstyle
(\forall^{\st}x^{1}\in U)(\exists^{\st} n^{0})(\forall^{\st}y^{1})(|x-y|<\frac{1}{n+1}\di y\in U)
\ee
Now fix standard $x_{0}\in U$ and let $n_{0}$ be the associated number as in \eqref{sipro}.  Then for any $y>x_{0}$ (The case $y<x_{0}$ is treated analogously) such that $|x_{0}-y|<\frac{1}{n_{0}+1}$, there is $z_{0}^{1}\in [0,1]$ such that $x_{0}=_{\R}y-z_{0}$, and this $z_{0}$ has a (necessarily) standard binary expansion $\alpha^{1}_{0}\leq_{1}1$, i.e.\ $z_{0}=_{\R}\r(\alpha_{0})$, implying that $y=_{\R}x_{0}-\r(\alpha_{0})$.  Since $x_{0}-\r(\alpha_{0})$ is standard, we have $x_{0}-\r(\alpha_{0})\in U$, implying that $y\in U$ by \eqref{rextje}, as required to yield the right-hand side of \eqref{fagotmuziekinstrument}.  
\end{proof}
The generalisation of the previous theorem to more general (topological) spaces is now straightforward, especially in light of \cite{loeb1}*{Definition~3.1.6}.  
Furthermore, since fundamental mathematical notions like connectedness, compactness, and continuity can be defined in terms of open sets, it is straightforward to prove that the aforementioned notions immediately imply the associated nonstandard definitions, when working in $\M$ (as we established above for continuity).  In turn, theorems dealing with these notions can be treated using the template 
from Section \ref{sucktwo}.

\begin{ack}\rm
This research was supported by the following funding bodies: FWO Flanders, the John Templeton Foundation, the Alexander von Humboldt Foundation, and the Japan Society for the Promotion of Science.  
The author expresses his gratitude towards these institutions. 
The author would like to thank Ulrich Kohlenbach and Helmut Schwichtenberg for their valuable advice.  
\end{ack}

\begin{bibdiv}
\begin{biblist}
\bib{beeson1}{book}{
  author={Beeson, Michael J.},
  title={Foundations of constructive mathematics},
  series={Ergebnisse der Mathematik und ihrer Grenzgebiete},
  volume={6},
  note={Metamathematical studies},
  publisher={Springer},
  date={1985},
  pages={xxiii+466},
}

\bib{brie}{article}{
  author={van den Berg, Benno},
  author={Briseid, Eyvind},
  author={Safarik, Pavol},
  title={A functional interpretation for nonstandard arithmetic},
  journal={Ann. Pure Appl. Logic},
  volume={163},
  date={2012},
  number={12},
  pages={1962--1994},
}

\bib{bennosam}{article}{
  author={van den Berg, Benno},
  author={Sanders, Sam},
  title={Transfer equals Comprehension},
  journal={Submitted},
  volume={},
  date={2014},
  number={},
  note={Available on arXiv: \url {http://arxiv.org/abs/1409.6881}},
  pages={},
}

\bib{diniberg}{article}{
  author={Berger, Josef},
  author={Schuster, Peter},
  title={Dini's theorem in the light of reverse mathematics},
  conference={ title={Logicism, intuitionism, and formalism}, },
  book={ series={Synth. Libr.}, volume={341}, publisher={Springer}, },
  date={2009},
  pages={153--166},
}

\bib{ishberg}{article}{
  author={Berger, Josef},
  author={Ishihara, Hajime},
  title={Brouwer's fan theorem and unique existence in constructive analysis},
  journal={MLQ Math. Log. Q.},
  volume={51},
  date={2005},
  number={4},
  pages={360--364},
}

\bib{bridge1}{book}{
  author={Bishop, Errett},
  author={Bridges, Douglas S.},
  title={Constructive analysis},
  series={Grundlehren der Mathematischen Wissenschaften},
  volume={279},
  publisher={Springer-Verlag},
  place={Berlin},
  date={1985},
  pages={xii+477},
}

\bib{damirzoo}{misc}{
  author={Dzhafarov, Damir D.},
  title={Reverse Mathematics Zoo},
  note={\url {http://rmzoo.uconn.edu/}},
}

\bib{escardooo}{article}{
  author={Escardo, Martin},
  title={Seemingly impossible functional programs},
  note={See \url {http://math.andrej.com/2007/09/28/seemingly-impossible-functional-programs/} and \url {http://www.cs.bham.ac.uk/~mhe/.talks/popl2012/}},
}

\bib{fega}{article}{
  author={Ferreira, Fernando},
  author={Gaspar, Jaime},
  title={Nonstandardness and the bounded functional interpretation},
  journal={Ann. Pure Appl. Logic},
  volume={166},
  date={2015},
  number={6},
  pages={701--712},
}

\bib{ferrari1}{article}{
  author={Ferreira, Fernando},
  title={Proof interpretations and majorizability},
  conference={ title={Logic Colloquium 2007}, },
  book={ series={Lect. Notes Log.}, volume={35}, publisher={Assoc. Symbol. Logic, La Jolla, CA}, },
  date={2010},
  pages={32--81},
}

\bib{folieke}{article}{
   author={Ferreira, Fernando},
   author={Oliva, Paulo},
   title={Bounded functional interpretation},
   journal={Ann. Pure Appl. Logic},
   volume={135},
   date={2005},
   number={1-3},
   pages={73--112},
}

\bib{benno2}{article}{
  author={Hadzihasanovic, Amar},
  author={van den Berg, Benno},
  title={Nonstandard functional interpretations and \ models},
  journal={arXiv},
  volume={},
  date={2014},
  number={},
  note={Available on arXiv: \url {http://arxiv.org/abs/1402.0784}},
  pages={},
}

\bib{polahirst}{article}{
  author={Hirst, Jeffry L.},
  title={Representations of reals in reverse mathematics},
  journal={Bull. Pol. Acad. Sci. Math.},
  volume={55},
  date={2007},
  number={4},
  pages={303--316},
}

\bib{kohlenbach3}{book}{
  author={Kohlenbach, Ulrich},
  title={Applied proof theory: proof interpretations and their use in mathematics},
  series={Springer Monographs in Mathematics},
  publisher={Springer-Verlag},
  place={Berlin},
  date={2008},
  pages={xx+532},
}

\bib{kohlenbach2}{article}{
  author={Kohlenbach, Ulrich},
  title={Higher order reverse mathematics},
  conference={ title={Reverse mathematics 2001}, },
  book={ series={Lect. Notes Log.}, volume={21}, publisher={ASL}, },
  date={2005},
  pages={281--295},
}

\bib{kohlenbach4}{article}{
  author={Kohlenbach, Ulrich},
  title={Foundational and mathematical uses of higher types},
  conference={ title={Reflections on the foundations of mathematics (Stanford, CA, 1998)}, },
  book={ series={Lect. Notes Log.}, volume={15}, publisher={ASL}, },
  date={2002},
  pages={92--116},
}

\bib{koliva}{article}{
  author={Kohlenbach, Ulrich},
  author={Oliva, Paulo},
  title={Proof mining in $L_1$-approximation},
  journal={Ann. Pure Appl. Logic},
  volume={121},
  date={2003},
  number={1},
  pages={1--38},
}

\bib{earlyk}{article}{
  author={Kohlenbach, Ulrich},
  title={Effective moduli from ineffective uniqueness proofs. An unwinding of de la Vall\'ee Poussin's proof for Chebycheff approximation},
  journal={Ann. Pure Appl. Logic},
  volume={64},
  date={1993},
  number={1},
  pages={27--94},
}

\bib{loeb1}{book}{
  author={Hurd, Albert E.},
  author={Loeb, Peter A.},
  title={An introduction to nonstandard real analysis},
  series={Pure and Applied Mathematics},
  volume={118},
  publisher={Academic Press Inc.},
  place={Orlando, FL},
  date={1985},
  pages={xii+232},
}

\bib{munkies}{book}{
  author={Munkres, James R.},
  title={Topology},
  publisher={Prentice-Hall},
  date={2000, 2nd edition},
  pages={xvi+537},
}

\bib{wownelly}{article}{
  author={Nelson, Edward},
  title={Internal set theory: a new approach to nonstandard analysis},
  journal={Bull. Amer. Math. Soc.},
  volume={83},
  date={1977},
  number={6},
  pages={1165--1198},
}

\bib{noortje}{book}{
  author={Normann, Dag},
  title={Recursion on the countable functionals},
  series={LNM 811},
  volume={811},
  publisher={Springer},
  date={1980},
  pages={viii+191},
}

\bib{rudin}{book}{
  author={Rudin, Walter},
  title={Principles of mathematical analysis},
  edition={3},
  note={International Series in Pure and Applied Mathematics},
  publisher={McGraw-Hill},
  date={1976},
  pages={x+342},
}

\bib{robinson1}{book}{
  author={Robinson, Abraham},
  title={Non-standard analysis},
  publisher={North-Holland},
  place={Amsterdam},
  date={1966},
  pages={xi+293},
}

\bib{aloneatlast3}{article}{
  author={Sanders, Sam},
  title={\textup {ERNA} and {F}riedman's {R}everse {M}athematics},
  year={2011},
  journal={J.\ of Symb.\ Logic},
  volume={76},
  pages={637-664},
}

\bib{samzoo}{article}{
  author={Sanders, Sam},
  title={The taming of the Reverse Mathematics zoo},
  year={2014},
  journal={Submitted, Available from arXiv: \url {http://arxiv.org/abs/1412.2022}},
}

\bib{sambon}{article}{
  author={Sanders, Sam},
  title={The unreasonable effectiveness of Nonstandard Analysis},
  year={2015},
  journal={Submitted, Available from arXiv: \url {http://arxiv.org/abs/1508.07434}},
}

\bib{simpson1}{collection}{
  title={Reverse mathematics 2001},
  series={Lecture Notes in Logic},
  volume={21},
  editor={Simpson, Stephen G.},
  publisher={ASL},
  place={La Jolla, CA},
  date={2005},
  pages={x+401},
}

\bib{simpson2}{book}{
  author={Simpson, Stephen G.},
  title={Subsystems of second order arithmetic},
  series={Perspectives in Logic},
  edition={2},
  publisher={CUP},
  date={2009},
  pages={xvi+444},
}

\bib{troelstrac}{book}{
   author={Troelstra, Anne Sjerp},
   title={Choice sequences},
   note={A chapter of intuitionistic mathematics;
   Oxford Logic Guides},
   publisher={Clarendon Press, Oxford},
   date={1977},
   pages={ix+170},
}

\bib{watje}{article}{
   author={Wattenberg, Frank},
   title={Nonstandard analysis and constructivism?},
   journal={Studia Logica},
   volume={47},
   date={1988},
   pages={303--309},
}

\end{biblist}
\end{bibdiv}

\bye